\documentclass[11pt,reqno,twosdie]{amsart}

\usepackage{amsmath,amssymb,mathrsfs}

\usepackage[asymmetric,top=3.5cm,bottom=4.3cm,left=3.1cm,right=3.1cm]{geometry}
\geometry{a4paper}
\usepackage{amsmath,amsfonts,amsthm,mathrsfs,amssymb,cite}
\usepackage[usenames]{color}

\usepackage{mathtools} 
\usepackage{graphics}
\usepackage{framed}

\usepackage{enumerate}

\usepackage{hyperref}
\usepackage{xcolor}
\hypersetup{
  colorlinks,
  linkcolor={blue!80!black},
  urlcolor={blue!80!black},
  citecolor={blue!80!black}
}
\usepackage[capitalize,noabbrev]{cleveref}

%
%
\usepackage{graphicx}
\usepackage{latexsym} 
\usepackage{enumerate}

\theoremstyle{plain}
\newtheorem{theorem}{Theorem}[section]
\newtheorem{lemma}[theorem]{Lemma}
\newtheorem{corollary}[theorem]{Corollary}
\newtheorem{proposition}[theorem]{Proposition}

\theoremstyle{remark}
\newtheorem{definition}[theorem]{Definition}
\newtheorem{remark}[theorem]{Remark}



%
%

\frenchspacing

\renewcommand{\eqref}[1]{\textnormal{(\ref{#1})}}

\numberwithin{equation}{section}

\newcommand{\bfx}{\mathbf{x}}


\newcommand{\R}{\mathbb{R}}

\newcommand{\N}{\mathbb{N}}

\def\bsi{{\mathrm{i}}}

\def\bsl{ \boldsymbol {l} }
 \def\bsL{ \boldsymbol {L} }
\def\bfx{ \mathbf {x} }
 
\usepackage{color}

\title[On nodal and generalized singular structures of Laplacian eigenfunctions]{On nodal and generalized singular structures of Laplacian eigenfunctions and applications in $\mathbb{R}^3$} 

\author{Xinlin Cao}
\address{Department of Mathematics, Hong Kong Baptist University, Kowloon, Hong Kong, China.}
\email{xlcao.math@foxmail.com}

\author{Huaian Diao}
\address{School of Mathematics and Statistics, Northeast Normal University,
Changchun, Jilin 130024, China.}
\email{hadiao@nenu.edu.cn}

\author{Hongyu Liu}
\address{Department of Mathematics, Hong Kong Baptist University, Kowloon, Hong Kong, China.}
\email{hongyu.liuip@gmail.com, hongyuliu@hkbu.edu.hk}

\author{Jun Zou}
\address{Department of Mathematics, The Chinese University of Hong Kong, Hong Kong, China}
\email{zou@math.cuhk.edu.hk}


\begin{document}

\begin{abstract}

This paper is a continuation and an extension of our recent work \cite{CDL2} on the geometric structures of Laplacian eigenfunctions and their applications to inverse scattering problems. In \cite{CDL2}, the analytic behaviour of the Laplacian eigenfunctions is investigated at a point where two nodal or generalised singular lines intersect. It reveals a certain intriguing property that the vanishing order of the eigenfunction at the intersecting point is related to the rationality of the intersecting angle. In this paper, we consider the 3D counterpart of such a study by studying the analytic behaviours of the Laplacian eigenfunctions at places where nodal or generalised singular planes are intersect. Compared to the 2D case, the geometric situation is more complicated: the intersection of two planes generates an edge corner, whereas the intersection of more than three planes generates a vertex corner. We provide a comprehensive characterisation for all of those cases. Moreover, we apply the spectral results to establish some novel unique identifiability results for the geometric inverse problems of recovering the shape as well as the (possible) surface impedance parameter by the associated scattering far-field measurements.

\medskip 
\noindent{\bf Keywords} Laplacian eigenfunctions, geometric structures, nodal and generalised singular planes, inverse scattering, impedance obstacle, uniqueness, a single far-field pattern
 
\medskip
\noindent{\bf Mathematics Subject Classification (2010)}: 35P05, 35P25, 35R30, 35Q60

\end{abstract}

\maketitle

\section{Introduction}\label{sec:Intro}

In this paper, we are concerned with the geometric structures of Laplacian eigenfunctions and their application to the geometrical inverse scattering problem. The study of the geometric properties of Laplacian eigenfunctions has a rich theory in the literature, and it still remains an active field with many colourful theoretical and computational developments. We refer to our recent paper \cite{CDL2} and the related references therein for a relatively comprehensive introduction on this intriguing topic. In fact, the current article is a continuation and an extension of our study in \cite{CDL2}, where the intersection of two nodal or generalised singular lines is considered. It reveals certain intriguing property that the vanishing order (analytic quantity) of the eigenfunction at the intersecting point is related to the rationality (geometric quantity) of the intersecting angle. This spectral result is applied directly to the inverse obstacle scattering problem and the inverse diffraction grating problem in establishing several novel unique identifiability results in determining the polygonal shape/support of an inhomogeneous scattering object as well as the (possible) surface impedance parameter by at most a few far-field measurements. It is natural to consider the corresponding extension to the three-dimensional setting by studying the intersections of nodal or generalised singular planes and their implications to the analytic behaviours of the eigenfunctions. In three dimensions, the geometric setup is more complicated: the intersection of two planes produces an edge corner, whereas the intersection of more than three planes produces a vertex corner; see Fig.~1 in what follows for a schematic illustration. We shall derive comprehensive characterisation on the relationship between the analytic behaviours of the eigenfunction at the corner point and the geometric quantities of that corner. Indeed, in the former case, we show that the vanishing order of the eigenfunction is related to the rationality of the intersecting angle in a similar manner to the two-dimensional case, whereas in the latter case, the vanishing order of the eigenfunction is related to the intersecting angle in a more complicated and mysterious manner through the roots of the Legendre polynomials. Similar to \cite{CDL2}, the obtained spectral results are also applied to derive several novel unique identifiability results in the geometrical inverse scattering problem of determining an impenetrable obstacle as well as the (possibly) surface impedance by at most a few far-field measurements in the polyhedral setup. The rest of this section is mainly devoted to the introduction of the mathematical setup for our study. 

Let  $\Omega$ be an open set in $\mathbb{R}^3$. Consider $u\in L^2(\Omega)$ and $\lambda\in\mathbb{R}_+$ such that
\begin{equation}\label{eq:eig}
-\Delta u=\lambda u. 
\end{equation}
$u$ in \eqref{eq:eig} is referred to as a (generalised) Laplacian eigenfunction. Indeed, compared to the conventional notion of Laplacian eigenfunctions, we do not prescribe any homogeneous boundary condition on $\partial\Omega$ for $u$ in \eqref{eq:eig}. That means, the spectral results that we establish in this paper apply to any function that satisfies \eqref{eq:eig} in the interior of $\Omega$, in particular, including all the conventional Laplacian eigenfunctions. We next introduce several critical definitions for our subsequent use. In what follows, for $\Pi$ being a flat plane in $\mathbb{R}^3$, any non-empty open connected subset $\Sigma\Subset\Pi$ is called a \emph{cell} of $\Pi$. Let $\widetilde{\Pi}=\Pi_\Sigma$ denote the connected component of $\Pi\cap\Omega$ that contains $\Sigma$. 

\begin{definition}\label{def:1}
Consider $u$ to \eqref{eq:eig} being a nontrivial eigenfunction. Let $\Sigma\subset\Omega$ be a cell of $\Pi$, and let $\eta\in\mathbb{C}$ be a constant. If $u|_{\Sigma}=0$, $\Sigma$ is said to be a nodal cell of $u$ in $\Omega$. By analytic continuation, it is seen that $u|_{\widetilde{\Pi}}=0$, and $\widetilde{\Pi}$ is said to be a nodal plane of $u$. In a similar manner, in the case $(\partial_\nu u+\eta u)\big|_{\Sigma}=0$, where $\nu$ is a unit one-sided normal direction of $\Pi$, $\Sigma$ and $\widetilde\Pi$ are respectively called the generalised singular cell and plane. In the particular case $\eta\equiv 0$, a generalised singular plane is also called a singular plane. Let ${\mathcal N}^\lambda_\Omega$, ${\mathcal S}^\lambda_\Omega $ and ${\mathcal M}^\lambda_\Omega$, respectively, signify the sets of nodal, singular and generalised singular planes of $u$ in \eqref{eq:eig}. 
\end{definition}

According to Definition~\ref{def:1}, a nodal/generalised singular plane is actually a cell that is fully extended in $\Omega$. Indeed, by the fact that $u$ is analytic in $\Omega$, we know that if the homogeneous condition is satisfied on a cell, then it is also satisfied on the so-called ``plane" in Definition~\ref{def:1} by the analytic continuation. In what follows, most of the planes are actually the nodal/generalised singular planes in the sense of Definition~\ref{def:1}, which should be clear from the context. Let $B_\rho(\mathbf{x})$ denote a ball of radius $\rho\in\mathbb{R}_+$ and centred at $\mathbf{x}\in\mathbb{R}^3$. For a set $K\subset\mathbb{R}^3$, $B_\rho(K):=\{\mathbf{x}; \mathbf{x}\in B_\rho(\mathbf{y})\ \mbox{for any}\ \mathbf{y}\in K\}$. 

\begin{definition}\label{def:edge}
Let $\Pi_j$, $j=1,2$, be two planes in $\Omega$ such that $\Pi_1\cap\Pi_2=\bsL$ with $\bsL$ a line segment. Let $\bsl\Subset\bsL$ be an open line segment and $\rho\in\mathbb{R}_+$ be sufficiently small such that $B_\rho(\bsl)\subset\Omega$. Let $\mathcal{W}(\Pi_1, \Pi_2)$ denote one of the wedge domains formed by $\Pi_1$ and $\Pi_2$. Then 
$\mathcal{W}(\Pi_1,\Pi_2)\cap B_\rho(\bsl)$ is called an edge corner associated with $\Pi_1$ and $\Pi_2$; see Fig.~1 for a schematic illustration. It is denoted by ${\mathcal E}(\Pi_1, \Pi_2,\bsl)$. Any $\mathbf{x}\in \bsl$ is said to be an edge-corner point of ${\mathcal E}(\Pi_1, \Pi_2,\bsl)$. 
\end{definition}

\begin{definition}\label{def:vertex}
Let $\{\Pi_\ell \}_{\ell=1}^n$  ($n\geq 3$)  be $n$ planes in $\Omega$ such that they form a  polyhedral cone $\mathcal K$ with the vertex $\bfx_0\in\Omega$. Let $\rho\in\mathbb{R}_+$ be sufficiently small such that $B_\rho(\bfx_0)\subset\Omega$. Then $\mathcal K \cap B_\rho(\bfx_0)$ is called a vertex corner associated with $\Pi_1$, $\Pi_2$, $\ldots$, $\Pi_n$. It is  denoted by $\mathcal{V}(\{\Pi_\ell\}_{\ell=1}^n, \bfx_0)$; see Fig.~1 for a schematic illustration. 

\end{definition}

\begin{figure}[htbp]
 \includegraphics[width=0.4\linewidth]{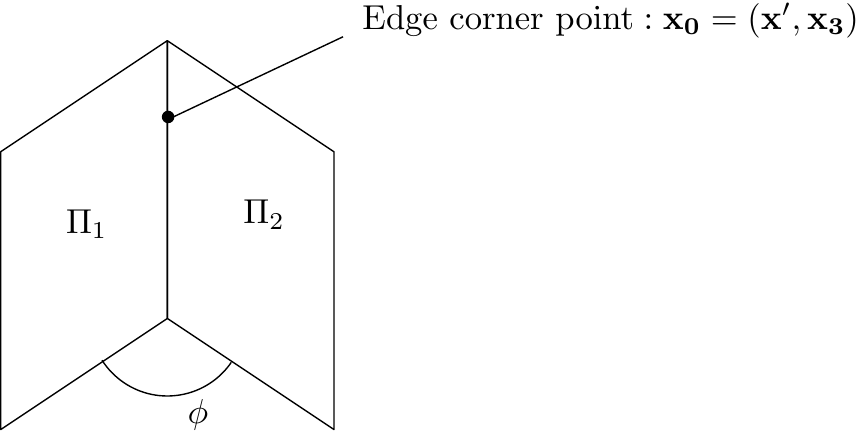} \qquad \includegraphics[width=0.3\linewidth]{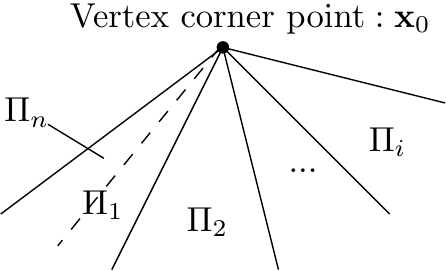}
 \caption{Schematic illustrations of 3D edge corner and vertex corner respectively.}
\end{figure}

	It is obvious that a vertex corner $\mathcal{V}(\{\Pi_\ell\}_{\ell=1}^n, \bfx_0)$ is composed by finite many edge corners, which are intersected by any two adjacent pieces of planes. Moreover, a vertex-corner point must be an edge-corner point. Definitions~\ref{def:1}--\ref{def:vertex} fix some geometric notions. Next, we introduce several analytic notions for the Laplacian eigenfunction. 

\begin{definition}\label{def:3}
Let $u$ be a nontrivial eigenfunction in \eqref{eq:eig}. For a given point $\mathbf{x}_0 \in \Omega$, if there  exits a number $N \in {\mathbb N}\cup\{0\}$ such that  
\begin{equation}\label{eq:normal3}
	\lim_{\rho\rightarrow +0} \frac{1}{\rho^m} \int_{B_\rho(\mathbf{x}_0)}\, |u(\mathbf{x})|\, {\rm d} \mathbf{x}=0\ \ \mbox{for}\ \ m=0,1,\ldots, {{N+1}},
\end{equation}
we say that $u$ vanishes at $\mathbf{x}_0$ up to the order $N$. The largest possible $N$ such that \eqref{eq:normal3} is fulfilled is called the vanishing order of $u$ at $\mathbf{x}_0$, and we write 
\begin{equation*}\label{eq:normal4}
\mathrm{Vani}(u; \mathbf{x}_0)=N. 
\end{equation*}
If \eqref{eq:normal3} holds for any $N\in\mathbb{N}$, then we say that the vanishing order is infinity. 
\end{definition}

By the strong UCP, if the vanishing order of $u$ at $\bfx_0\in \Omega$ is infinite, we know that $u \equiv 0$  in $\Omega $.


For the definition of the vanishing order of $u$ at a 3D edge or vertex corner point, we have
\begin{definition}\label{def:4}
Let $u$ be a nontrivial eigenfunction in \eqref{eq:eig}. Consider an edge corner 
${\mathcal E}(\Pi_1, \Pi_2,\bsl)\Subset \Omega$.  For {{any}} given $\mathbf{x}_0 \in  \bsl$, if 
$$
\mathrm{Vani}(u; \mathbf{x}_0)=N, 
$$
we say that $u$ vanishes at $\bfx_0$ associated with the edge corner 
${\mathcal E}(\Pi_1, \Pi_2,\bsl)\Subset \Omega$ up to order $N$, denoted by
%
%
\begin{equation*}\label{eq:normal4}
\mathrm{Vani}(u; \bfx_0, \Pi_1, \Pi_2)=N. 
\end{equation*}
For a vertex-corner point $\mathbf{ x}_0\in \Omega$ which is intersected by $\Pi_i$, $i=1,2,...n$, the vanishing order of $u$ at $\mathbf{ x}_0$ is defined by
\begin{equation*}\label{def:3d plane}
\mathrm{Vani}(u; \mathbf{ x}_0) :=\max\big\{\max_{i=1,2,...n-1}\mathrm{Vani}(u; \bfx_0,\Pi_i, \Pi_{i+1}), \mathrm{Vani}(u; \bfx_0, \Pi_n, \Pi_1)\big\}. 
\end{equation*}
\end{definition}

With the above definitions, we are mainly concerned with the vanishing properties of the Laplacian eigenfunctions at places where two or more nodal/singular/generalised singular planes intersect. 
The remaining part of the paper is organised as follows. In Section \ref{sec2}, we consider the vanishing property of the Laplacian eigenfunction at an edge corner intersected by two planes of the three kinds: nodal plane, singular plane or generalized singular plane. In Section \ref{sec3}, we study the vanishing property at a vertex corner intersected by $n$ planes, $n\geq3$, on the basis of Section \ref{sec2}. As a direct consequence of Sections \ref{sec2} and \ref{sec3}, Section \ref{sec4} is devoted to the discussion of the irrational intersection as a special case with infinite vanishing order. In Section \ref{sec5}, as an important application of the obtained spectral results, we establish the unique identifiability results in determining the obstacle as well as the surface impedance parameter by at most two far-field measurements for the inverse obstacle scattering problem.


\section{Vanishing orders at edge-corner points}\label{sec2}


In this section, we study the vanishing property of the Laplacian eigenfunction at an edge-corner point $\bfx_0\in \bsl$ associated with ${\mathcal E}(\Pi_1, \Pi_2,\bsl)$. The two planes $\Pi_\ell\,(\ell=1,2)$ could be either one of following three types: nodal, singular or generalized singular.  First, we give a definition of the irrational or rational dihedral angle of two intersecting planes. 

\begin{definition}\label{def:2}
Let $P_1$ and $P_2$ be two planes in $\R^3$ that intersect with each other. Let  $\phi\in(0, \pi)$ be one of  the associated intersecting dihedral angle of $P_1$ and $P_2$. Set
\begin{equation*}\label{eq:normal2}
\phi=\alpha\cdot \pi, \ \ \alpha\in(0,1). 
\end{equation*}
Then, $\phi$ is said to be an \emph{irrational dihedral angle} if $\alpha$ is an irrational number; and it is said to be a rational dihedral angle of degree $q$ if $\alpha=p/q$ with $p, q\in\mathbb{N}$ and irreducible. 
\end{definition}


Since $-\Delta$ is invariant under rigid motions, throughout the rest of this paper, we assume that the edge corner ${\mathcal E}(\Pi_1, \Pi_2,\bsl)$ satisfies

\begin{equation*}\label{notation}
\bsl=\big\{~\bfx=(\bfx',x_3)\in \R^3; \bfx'=0,\ x_3\in (-H, H)\big\}\Subset\Omega,
\end{equation*}
where $2H$ is the length of $\bsl$. That is, $\bsl$ coincides with the $x_3$-axis. We further assume that $\Pi_1$ coincides with the $(x_1,x_2)$-plane while $\Pi_2$ possesses a dihedral angle $\alpha\cdot\pi$ away from $\Pi_1$ in the anti-clockwise direction; see Figure 2 for a schematic illustration. Clearly, we can assume that $\alpha\in(0, 1)$. 
 Moreover, when we consider the vanishing order at an edge-corner point of ${\mathcal E}(\Pi_1, \Pi_2,\bsl)$, throughout this section, we assume that the edge-corner point under consideration is the origin $\mathbf 0 \in \bsl$. 



In the next subsection, we first consider a relatively simpler case that at least one of the intersecting planes of ${\mathcal E}(\Pi_1, \Pi_2,\bsl)$ is a nodal plane. Without loss of generality, throughout this subsection, we assume that $u|_{\Pi_1}\equiv0$.   

\subsection{Vanishing orders at an edge-corner point with at least one plane being nodal}\label{vanish nodal2}

\begin{figure}[htbp]
	\centering
	\includegraphics[width=0.3\linewidth]{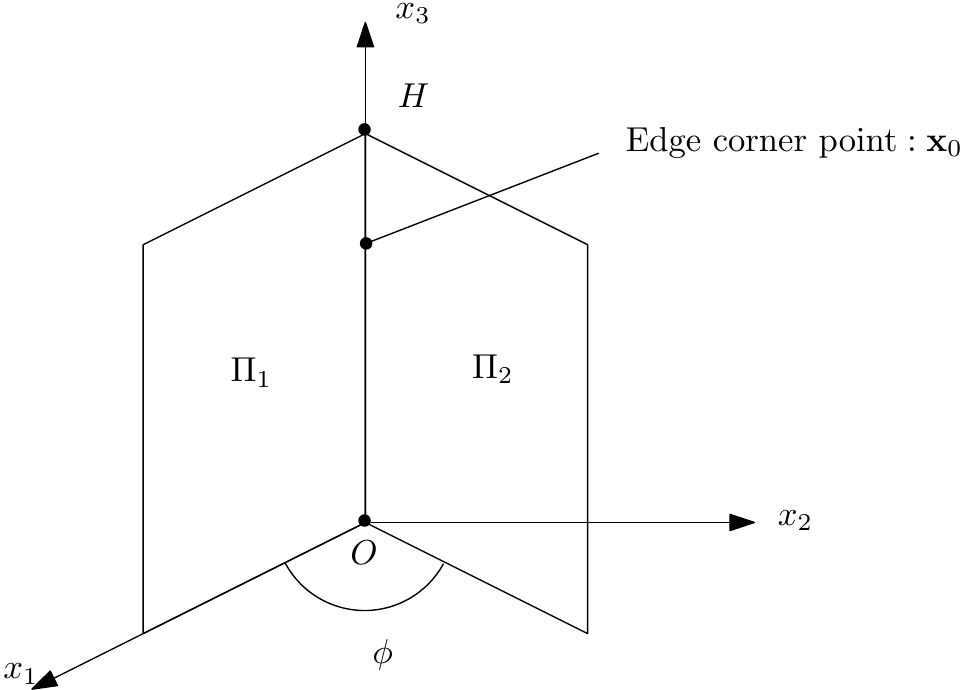}
	\caption{Schematic illustration of two intersecting planes with the 3D edge corner and the dihedral angle $\phi$.}
	\label{fig:coordinate1}
\end{figure}

We first derive several important auxiliary results for the subsequent use. 

\begin{proposition}\label{neumann bd}
	Let 
	\begin{equation}\label{eq:x sph}
		 \mathbf{ x}=(r\sin\theta\cos\phi, r\sin\theta\sin\phi, r\cos\theta):=(r,\theta,\phi),\ r\geq 0, \, {{\theta\in [0,\pi),\, \phi \in [0, 2\pi)}}
	\end{equation}
	 be the spherical  coordinate of $\bfx \in \R^3$. Let $\Pi$ be any of the two planes associated with ${\mathcal E}(\Pi_1, \Pi_2,\bsl)$. For any point $\bfx$ belonging to $\Pi$, we know that  $\phi$ defined in \eqref{eq:x sph} is fixed; see Fig.~2. Let $\nu$ be the unit normal vector that is perpendicular to $\Pi$. Then 
	 \begin{equation*}\label{neumann bdd}
	 \frac{\partial u}{\partial\nu}=\pm\frac{1}{r\sin\theta}\frac{\partial u}{\partial\phi}.	 
	 \end{equation*}
\end{proposition}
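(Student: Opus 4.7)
The plan is to verify this identity by a direct computation in spherical coordinates, relying on the fact that both planes of the edge corner $\mathcal{E}(\Pi_1,\Pi_2,\bsl)$ contain the $x_3$-axis, so in the prescribed spherical frame each such plane is cut out by fixing the azimuthal coordinate $\phi$ (up to the ambiguity $\phi\mapsto\phi+\pi$).

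First I would recall the standard orthonormal spherical frame
\begin{equation*}
\hat{r}=(\sin\theta\cos\phi,\sin\theta\sin\phi,\cos\theta),\qquad
\hat{\theta}=(\cos\theta\cos\phi,\cos\theta\sin\phi,-\sin\theta),\qquad
\hat{\phi}=(-\sin\phi,\cos\phi,0),
\end{equation*}
together with the representation of the gradient
\begin{equation*}
\nabla u=\frac{\partial u}{\partial r}\hat{r}+\frac{1}{r}\frac{\partial u}{\partial\theta}\hat{\theta}+\frac{1}{r\sin\theta}\frac{\partial u}{\partial\phi}\hat{\phi}.
\end{equation*}

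Next I would identify the unit normal to $\Pi$. Since $\Pi$ is one of $\Pi_1,\Pi_2$, it contains the $x_3$-axis $\bsl$ and therefore admits the two in-plane tangent directions $\hat{z}=(0,0,1)$ and, with $\phi_0\in[0,2\pi)$ denoting the (fixed) azimuthal angle attached to $\Pi$, the unit vector $(\cos\phi_0,\sin\phi_0,0)$. A unit normal $\nu$ to $\Pi$ is therefore the cross product of these two vectors up to a sign, giving $\nu=\pm(-\sin\phi_0,\cos\phi_0,0)$. Comparing this with the expression of $\hat{\phi}$ above, and using that on $\Pi$ the azimuthal coordinate equals $\phi_0$, one sees immediately that $\nu|_\Pi=\pm\hat{\phi}|_{\phi=\phi_0}$.

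Finally I would dot the spherical gradient against $\nu$. The orthonormality of $\{\hat{r},\hat{\theta},\hat{\phi}\}$ kills the first two terms of the gradient, and only the last one survives:
\begin{equation*}
\frac{\partial u}{\partial\nu}=\nabla u\cdot\nu=\pm\,\nabla u\cdot\hat{\phi}=\pm\frac{1}{r\sin\theta}\frac{\partial u}{\partial\phi},
\end{equation*}
which is the desired identity. There is essentially no analytic obstacle here; the one point that needs a careful word is the geometric remark that the planes of the edge corner contain $\bsl$ by construction, which is what makes ``$\phi$ constant on $\Pi$'' meaningful and lets the azimuthal coordinate be identified with the dihedral parameter appearing in Definition~\ref{def:2} and in Fig.~\ref{fig:coordinate1}.
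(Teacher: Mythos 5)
Your proof is correct and is exactly the ``direct calculation using the spherical-coordinate representations'' that the paper's proof invokes without writing out: identify $\nu$ with $\pm\hat{\phi}$ (since $\Pi$ contains the $x_3$-axis) and dot it against the spherical gradient of $u$. No gap; you have simply supplied the details the paper leaves implicit.
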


\begin{proof}
The proof follows from direct calculations using the spherical-coordinate representations. 
\end{proof}

\begin{lemma}\label{expansion}\cite[Section 3.3]{CK}
	 $u$ in \eqref{eq:eig} has the following spherical wave expansion in spherical coordinates around the origin, 
	\begin{equation}\label{expan}
	u(\mathbf{ x})=4\pi\sum_{n=0}^{\infty}\sum_{m=-n}^{n}\bsi^na_n^mj_n(\sqrt{\lambda}r)Y_n^m(\theta, \phi),
	\end{equation}
	where $Y_n^m(\theta,\phi)$ is the spherical harmonics that is given by 
	\begin{equation*}\label{sphe harmonic}
	Y_n^m(\theta,\phi)=\sqrt{\frac{2n+1}{4\pi}\frac{(n-|m|)!}{(n+|m|)!}}P_n^{|m|}(\cos\theta)e^{\bsi m\phi}
	\end{equation*}
	with the associated Legendre function $P_n^m(t)$, and $j_n(t)$ is the spherical Bessel function of order $n$.
\end{lemma}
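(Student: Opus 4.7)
The plan is to derive \eqref{expan} by separation of variables for the Helmholtz equation $(\Delta+\lambda)u=0$ that \eqref{eq:eig} encodes (with wavenumber $\sqrt{\lambda}$). By elliptic regularity, $u$ is real-analytic throughout $\Omega$, so one may fix a closed ball $\overline{B_R(\mathbf{0})}\subset\Omega$ on which the expansion is to be justified.

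The first step is, for each $r\in(0,R]$, to expand the restriction of $u$ to the sphere of radius $r$ in the $L^2(\mathbb{S}^2)$-orthonormal basis of spherical harmonics:
\begin{equation*}
u(r,\theta,\phi)=\sum_{n=0}^{\infty}\sum_{m=-n}^{n} c_n^m(r)\,Y_n^m(\theta,\phi),\qquad c_n^m(r)=\int_{\mathbb{S}^2} u(r,\theta,\phi)\,\overline{Y_n^m(\theta,\phi)}\,{\rm d}\sigma.
\end{equation*}
Writing $\Delta$ in spherical coordinates and using the identity $\Delta_{\mathbb{S}^2}Y_n^m=-n(n+1)Y_n^m$, the equation $(\Delta+\lambda)u=0$ collapses, by orthonormality of $\{Y_n^m\}$ on each sphere, to the spherical Bessel equation for every coefficient,
\begin{equation*}
(c_n^m)''(r)+\frac{2}{r}(c_n^m)'(r)+\Big(\lambda-\frac{n(n+1)}{r^2}\Big)c_n^m(r)=0.
\end{equation*}
Its two independent solutions are $j_n(\sqrt{\lambda}\,r)$ and $y_n(\sqrt{\lambda}\,r)$; since $u$ is bounded at the origin while $y_n(\sqrt{\lambda}\,r)\to\infty$ as $r\to 0^+$, the $y_n$-branch must drop out, leaving $c_n^m(r)=4\pi\,\bsi^n a_n^m\,j_n(\sqrt{\lambda}\,r)$ for appropriate constants $a_n^m$. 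The prefactor $4\pi\,\bsi^n$ is the standard normalisation matching the plane-wave (Rayleigh) expansion of $\rme^{\bsi\sqrt{\lambda}\,\hat d\cdot\mathbf{x}}$, and substituting this radial form into the spherical-harmonic series produces precisely \eqref{expan}.

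The only nontrivial point is convergence. Real-analyticity of $u$ on a neighbourhood of $\overline{B_R(\mathbf{0})}$ yields super-polynomial decay of $c_n^m(r)$ in $n$ uniformly for $r$ in compact subsets of $(0,R]$; combined with the entirety of $j_n$ in its argument and the at-most-polynomial growth of $Y_n^m$ in $n$, this delivers absolute and uniform convergence of \eqref{expan} on compact subsets of $B_R(\mathbf{0})$ and legitimises term-by-term differentiation. The expected main obstacle, were one to insist on a fully self-contained treatment, is controlling this convergence uniformly down to $r=0$, where the $1/r^2$ singularity in the radial equation and the nonuniformity of spherical coordinates at the origin must be handled through the regularity of $j_n(\sqrt{\lambda}\,r)Y_n^m$ as an entire harmonic polynomial times an analytic factor; in practice the statement is simply imported from \cite[Section 3.3]{CK}, where precisely this derivation is carried out in detail.
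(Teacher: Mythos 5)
Your derivation is correct and is essentially the standard argument that the paper implicitly relies on by citing \cite[Section 3.3]{CK}: expand $u$ on spheres in spherical harmonics, observe that each coefficient solves the spherical Bessel equation, discard the $y_n$ branch by boundedness at the origin, and absorb the conventional $4\pi\,\bsi^n$ normalisation into the constants $a_n^m$. The paper gives no proof of its own for this lemma, so there is nothing further to reconcile.
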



\begin{lemma}\cite[Theorem 2.4.4]{Ned}\label{base2}
	Let $0\leq m,l\leq n$. In the spherical coordinate system, the associated Legendre functions fulfill the following orthogonality condition for a fixed $n \in \mathbb N$:
    \begin{equation*}\label{ortho3}
    \int_{-\pi}^{\pi}\frac{P_n^m(\cos\theta)P_n^l(\cos\theta)}{\sin\theta}\,d\theta=
    \left\{  
    \begin{array}{cc}
    0&\mbox{ if }\quad l\neq m\\
    \frac{(n+m)!}{m(n-m)!}&\mbox{ if }\quad l=m
    \end{array}
    \right. .
    \end{equation*}
\end{lemma}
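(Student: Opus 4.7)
This is a classical orthogonality relation for the associated Legendre functions of a fixed degree $n$; since the lemma is quoted verbatim from Nedelec's monograph, I would only sketch the standard proof. The first move is to pass to the variable $x=\cos\theta$. Because the factor $1/\sin\theta$ is odd, the integral in the statement is understood with the integration carried out on $(0,\pi)$; after the change of variables the identity becomes
\[
\int_{-1}^{1}\frac{P_n^m(x)\,P_n^l(x)}{1-x^2}\,dx = \begin{cases} 0 & \text{if } l\neq m, \\ \dfrac{(n+m)!}{m(n-m)!} & \text{if } l=m, \end{cases}
\]
and this is what I would prove.

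\emph{Step 1: the case $l\neq m$.} Both $P_n^m$ and $P_n^l$ satisfy the associated Legendre equation in self-adjoint form,
\[
\frac{d}{dx}\!\bigl[(1-x^2) y'\bigr] + \Bigl[n(n+1) - \frac{k^2}{1-x^2}\Bigr] y = 0,
\]
with $k=m$ and $k=l$ respectively. Multiplying the first equation by $P_n^l$ and the second by $P_n^m$, subtracting, and integrating over $[-1,1]$ produces the Sturm--Liouville boundary contribution
\[
\Bigl[(1-x^2)\bigl(P_n^l (P_n^m)' - P_n^m (P_n^l)'\bigr)\Bigr]_{-1}^{1},
\]
which vanishes because, for $m,l\ge 1$, each $P_n^k$ carries a factor $(1-x^2)^{k/2}$ near the endpoints. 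The identity that survives is $(m^2-l^2)\int_{-1}^{1}P_n^m P_n^l/(1-x^2)\,dx = 0$, which yields the claimed orthogonality.

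\emph{Step 2: the case $l=m$.} Here I would use Rodrigues' representation $P_n^m(x) = (1-x^2)^{m/2}\,d^m P_n/dx^m$, which recasts the integral as
\[
\int_{-1}^{1}(1-x^2)^{m-1}\Bigl(\frac{d^m P_n}{dx^m}\Bigr)^{\!2}\,dx.
\]
Integrating by parts $m$ times, using $\frac{d}{dx}(1-x^2)^m = -2m x(1-x^2)^{m-1}$ to transfer the weight, and invoking the standard orthogonality $\int_{-1}^{1} P_n^2\,dx = 2/(2n+1)$ for Legendre polynomials (together with the Rodrigues formula $P_n(x) = (2^n n!)^{-1} d^n(x^2-1)^n/dx^n$), one reduces the integral to a Beta-function moment and collects the constant $(n+m)!/(m(n-m)!)$.

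\textbf{Main obstacle.} Step~1 is a routine Sturm--Liouville argument. The only real difficulty is the factorial bookkeeping in Step~2 as the $m$ derivatives are transferred through the weight $(1-x^2)^{m-1}$; the appearance of the $1/m$ prefactor (and hence the requirement $m\ge 1$ for the statement to make sense) is traced back to the single derivative that must act on $(1-x^2)^m$ to produce $-2mx(1-x^2)^{m-1}$, and carrying this factor through the final Beta integral is the sole computational kernel of the proof.
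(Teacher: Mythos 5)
The paper does not actually prove this lemma: it is quoted verbatim from N\'ed\'elec \cite[Theorem 2.4.4]{Ned}, so there is no ``paper's proof'' to match, and any complete standard argument would be acceptable. Your interpretation of the integral (integration over $(0,\pi)$, i.e.\ $x=\cos\theta\in(-1,1)$ with weight $1/(1-x^2)$) and your Step~1 are fine: the Sturm--Liouville subtraction with the boundary term $\bigl[(1-x^2)\bigl(P_n^l (P_n^m)'-P_n^m (P_n^l)'\bigr)\bigr]_{-1}^{1}$ is the standard way to get orthogonality in the order $m$ for fixed degree $n$, and your remark that $m=l\geq 1$ is needed (the $m=l=0$ integral diverges) is correct.

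Step~2, however, has a genuine flaw as described. The target constant $\frac{(n+m)!}{m(n-m)!}$ contains no factor $1/(2n+1)$, so the claimed reduction to the Legendre normalization $\int_{-1}^{1}P_n^2\,dx=2/(2n+1)$ cannot be where the value comes from; you appear to be conflating this integral with the weight-one norm $\int_{-1}^{1}[P_n^m]^2\,dx=\frac{2}{2n+1}\frac{(n+m)!}{(n-m)!}$, for which the Rodrigues/Beta-function route does work. If you carry out the $m$ integrations by parts you propose on $\int_{-1}^{1}(1-x^2)^{m-1}\bigl[P_n^{(m)}\bigr]^2dx$, the surviving bulk term pairs $P_n$ against $\frac{d^m}{dx^m}\bigl[(1-x^2)^{m-1}P_n^{(m)}\bigr]$, a polynomial of degree $n-2$, and hence vanishes by ordinary Legendre orthogonality; the entire value sits in the boundary term of the \emph{last} integration by parts (the first $m-1$ boundary terms vanish because $(1-x^2)^{m-1}P_n^{(m)}$ vanishes to order $m-1$ at $\pm1$). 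Evaluating that endpoint contribution via $\frac{d^{m-1}}{dx^{m-1}}(1-x^2)^{m-1}\big|_{x=\pm1}$ and $P_n^{(m)}(1)=\frac{(n+m)!}{2^m m!\,(n-m)!}$ does produce $\frac{(n+m)!}{m(n-m)!}$, but this is a different mechanism from the one you describe. Alternatively, the cleaner classical route is a recursion in $m$: using the Legendre equation differentiated $m-1$ times one gets $J_m=\frac{(m-1)(n+m)(n-m+1)}{m}J_{m-1}$ for $J_m=\int_{-1}^1[P_n^m]^2(1-x^2)^{-1}dx$, with base case $J_1=n(n+1)$. Either fix would complete Step~2; as written, the computational kernel you identify would evaluate to zero plus an untracked boundary term.
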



\begin{lemma}\label{coeff0}
	Suppose that for $t\in(0, h)$, $h\in\mathbb{R}_+$, 
	\begin{equation}\label{coef1}
	\sum_{n=0}^{\infty}\alpha_nj_n(t)=0,
	\end{equation}
	where $j_n(t)$ is the $n$-th spherical Bessel function. Then
	\begin{equation}\label{coef2}
	\alpha_n=0,\quad n=0,1,2,\cdots.
	\end{equation}
\end{lemma}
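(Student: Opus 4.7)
The natural strategy is to exploit the Taylor expansion of the spherical Bessel function at the origin, namely
\begin{equation*}
j_n(t) = \frac{t^n}{(2n+1)!!} + O(t^{n+2}) \quad \text{as } t \to 0^+,
\end{equation*}
so that $j_n$ vanishes to exact order $n$ at $t=0$ with a known nonzero leading coefficient. Different orders $n$ therefore contribute disjoint leading powers of $t$, and one expects \eqref{coef1} to force each coefficient $\alpha_n$ to vanish independently by matching Taylor coefficients at $t=0$.

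The argument I would carry out is induction on $n$. For the base case, letting $t\to 0^+$ in \eqref{coef1} annihilates every term with $n\geq 1$, while the $n=0$ contribution tends to $\alpha_0\, j_0(0)=\alpha_0$; hence $\alpha_0=0$. Assuming $\alpha_0=\cdots=\alpha_{k-1}=0$, the identity reduces to $\sum_{n\geq k}\alpha_n j_n(t)=0$ on $(0,h)$. Dividing by $t^k$ and letting $t\to 0^+$ kills every term with $n>k$ (because $j_n(t)/t^k=O(t^{n-k})$), while the $n=k$ term tends to $\alpha_k/(2k+1)!!$, forcing $\alpha_k=0$. Iterating yields \eqref{coef2}.

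The main obstacle is to justify rigorously the interchange of the limit $t\to 0^+$ with the infinite sum at each inductive step. Since \eqref{coef1} is assumed to hold as a convergent series on $(0,h)$, one may factor $j_n(t)=t^n\phi_n(t)$ where $\phi_n$ is entire and even with $\phi_n(0)=1/(2n+1)!!$, and view $\sum_n\alpha_n j_n(t)$ as a graded series in $t$ in which each power of $t$ receives contributions only from finitely many indices. The vanishing of this analytic sum on an interval forces every Taylor coefficient at $t=0$ to vanish, and the induction effectively reads off $\alpha_n$ from the coefficient of $t^n$. Equivalently, one can control the tail $\sum_{n>k}\alpha_n j_n(t)/t^k$ uniformly for small $t$ using the standard bound $|j_n(t)|\leq t^n/(2n+1)!!$, valid in a neighborhood of $0$ by the alternating-series form of the expansion, and apply dominated convergence to exchange limit and sum.
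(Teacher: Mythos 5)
Your argument is correct and follows essentially the same route as the paper: both rest on the expansion $j_n(t)=\frac{t^n}{(2n+1)!!}\bigl(1+O(t^2)\bigr)$ and recover $\alpha_n$ by matching the coefficient of $t^n$ at the origin, your inductive step of dividing by $t^k$ and letting $t\to 0^+$ being just a careful way of performing that comparison. Your discussion of the limit/series interchange is in fact more attentive than the paper's one-line proof, which simply substitutes \eqref{j_n} into \eqref{coef1} and compares coefficients of $t^n$.
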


\begin{proof}
	By \cite[Section 2.4]{CK} we know that
	\begin{align}\label{j_n}
	j_n(t):=\sum_{p=0}^{\infty}\frac{(-1)^pt^{n+2p}}{2^pp!1\cdot3\cdots(2n+2p+1)}=\frac{t^n}{(2n+1)!!}\Big(1+\sum_{p=1}^{\infty}\frac{(-1)^pt^{2p}}{2^pp!N_{l,n}}\Big)
	\end{align}
	where $N_{l,n}=(2n+3)\cdot(2n+5)\cdots(2n+2p+1)$. Substituting \eqref{j_n} into \eqref{coef1} and comparing the coefficient of $t^n$ ($n=1,2,\cdots$), we can deduce \eqref{coef2}.
\end{proof}

We are in a position to study the general vanishing orders with the help of spherical wave expansion of the Laplacian eigenfunction $u$ in \eqref{eq:eig} around an intersecting edge corner. 

\begin{lemma}\label{an0}
	Let $u$ be a Laplacian eigenfunction to \eqref{eq:eig}. Suppose that there exits an edge corner ${\mathcal E}(\Pi_1, \Pi_2,\bsl)$ such that
	$$
	{\mathcal E}(\Pi_1, \Pi_2,\bsl)\Subset \Omega,
	$$
 where $\Pi_\ell$, $\ell=1,2$, is from either one of $\mathcal{N}^\lambda_\Omega$, $\mathcal{S}^\lambda_\Omega$ or $\mathcal{M}^\lambda_\Omega$. If there exits a sufficiently small $\varepsilon\in\mathbb{R}_+$ such that
	\begin{equation}\label{lem:27 cond1}
		u|_{ B_\varepsilon(\mathbf{0}) \cap \bsl}=0,
	\end{equation}
	then we have 
	\begin{equation}\label{eq:an0}
		a_n^0=0,\quad n \in \mathbb{N} \cup \{0\}
	\end{equation}
	where $a_n^0$ is given in \eqref{expan}. 
\end{lemma}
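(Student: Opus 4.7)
The plan is to evaluate the spherical wave expansion \eqref{expan} along the edge $\bsl$ itself and use the orthogonality/vanishing properties of the associated Legendre functions at the poles of the sphere to eliminate every $m\neq 0$ term.

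First I would parametrise the segment $B_\varepsilon(\mathbf{0})\cap\bsl$ in spherical coordinates. Since $\bsl$ coincides with the $x_3$-axis, any point $\mathbf{x}=(0,0,x_3)$ on this segment corresponds to $r=|x_3|$ together with $\theta=0$ (for $x_3>0$) or $\theta=\pi$ (for $x_3<0$), while $\phi$ is indeterminate. The azimuthal indeterminacy is no issue because the expansion must still produce a well-defined value at these points. The key observation is that for the associated Legendre functions one has $P_n^{|m|}(\pm 1)=0$ whenever $|m|\geq 1$, and $P_n^0(1)=1$, $P_n^0(-1)=(-1)^n$. Consequently, in the spherical harmonic representation
\[
Y_n^m(\theta,\phi)=\sqrt{\tfrac{2n+1}{4\pi}\tfrac{(n-|m|)!}{(n+|m|)!}}\,P_n^{|m|}(\cos\theta)\,e^{\bsi m\phi},
\]
every summand with $m\neq 0$ drops out as $\theta\to 0$ or $\theta\to\pi$, leaving only the $m=0$ contribution, in which the $\phi$-dependence has already disappeared.

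Plugging this into \eqref{expan} and restricting to, say, the half-segment where $x_3>0$ and $r=x_3\in(0,\varepsilon)$, the hypothesis \eqref{lem:27 cond1} becomes
\[
4\pi\sum_{n=0}^{\infty}\bsi^n a_n^0\sqrt{\tfrac{2n+1}{4\pi}}\,j_n(\sqrt{\lambda}\,r)=0,\qquad r\in(0,\varepsilon).
\]
Setting $t=\sqrt{\lambda}\,r\in(0,\sqrt{\lambda}\,\varepsilon)$ and $\alpha_n:=4\pi\,\bsi^n a_n^0\sqrt{(2n+1)/(4\pi)}$, this is precisely the hypothesis of Lemma~\ref{coeff0}. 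Applying that lemma yields $\alpha_n=0$ for every $n$, and therefore $a_n^0=0$ for every $n\in\mathbb{N}\cup\{0\}$, which is \eqref{eq:an0}.

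No genuine obstacle is anticipated: the argument reduces to reading off the expansion at $\theta\in\{0,\pi\}$ and invoking the linear independence of the $j_n(\sqrt{\lambda}r)$ established in Lemma~\ref{coeff0}. The only minor delicacy is handling the coordinate singularity at the poles of the sphere, but this is precisely where the factor $P_n^{|m|}(\cos\theta)$ vanishes for $|m|\geq 1$, so the collapse of the double sum to a single sum over $n$ is automatic rather than problematic.
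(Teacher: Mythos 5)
Your proposal is correct and follows essentially the same route as the paper's proof: evaluate the spherical wave expansion \eqref{expan} on the segment of $\bsl$ where $\theta\in\{0,\pi\}$, use $P_n^{|m|}(\pm1)=0$ for $|m|\geq1$ together with $P_n^0(\pm1)=(\pm1)^n$ to reduce to the $m=0$ terms, and then invoke Lemma~\ref{coeff0} to conclude $a_n^0=0$. Your restriction to the half-segment $x_3>0$ is a harmless simplification of the same argument.
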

\begin{proof}
Since  the line segment  $\bsl$  associated with ${\mathcal E}(\Pi_1, \Pi_2,\bsl)$ coincides with the $x_3$-axis, in the spherical coordinate system \eqref{eq:x sph}, for $\bfx \in \bsl$ we know that $\theta=0\mbox{ or }\pi$. Combining with Lemma \ref{expansion}, under the condition \eqref{lem:27 cond1},  we have
\begin{equation}\label{u-1}
u|_{ B_\varepsilon(\mathbf{0}) \cap \bsl}=4\pi\sum_{n=0}^{\infty}\sum_{m=-n}^{n}\bsi^na_n^mj_n(\sqrt{\lambda}r)\sqrt{\frac{2n+1}{4\pi}}\sqrt{\frac{(n-|m|)!}{(n+|m|)!}}P_n^{|m|}(\pm1)e^{\bsi m\phi}=0.
\end{equation}
On the other hand, we have that for $m\in \mathbb N$ (cf. \cite{WB}),
\begin{equation}\label{P}
P_n^{-m}=(-1)^m\frac{(n-m)!}{(n+m)!}P_n^m,\ P_n^m(\pm1)=0,\ P_n^0(+1)=1,\
P_n^0(-1)=(-1)^n.
\end{equation}
Substituting \eqref{P} into \eqref{u-1}, it is easy to see that
\begin{equation*}\label{simp u-1}
\sum_{n=0}^{\infty}\bsi^n\sqrt{\frac{2n+1}{4\pi}}a_n^0j_n(\sqrt{\lambda}r)=0.
\end{equation*}
By virtue of Lemma~\ref{coeff0}, we readily have 
\begin{equation}\notag
\bsi^n\sqrt{\frac{2n+1}{4\pi}}a_n^0=0\quad\mbox{ for }n=0,1,2,\cdots,
\end{equation}
which completes the proof.	
\end{proof}

First, we consider the case that two nodal planes intersect each other to yield the edge corner. 
 
\begin{theorem}\label{soundsoft2}
	Let $u$ be a Laplacian eigenfunction to \eqref{eq:eig}. Consider an edge-corner ${\mathcal E}(\Pi_1, \Pi_2,\bsl)\Subset \Omega$
	where the two planes $\Pi_\ell$, $\ell=1,2$ are assumed to be nodal, namely $\Pi_\ell \in \mathcal{N}^\lambda_\Omega (\ell=1,2)$. 
	{If 
			\begin{equation}\notag
			\angle(\Pi_{1}, \Pi_2)=\phi=\alpha\cdot\pi,\quad \alpha\in(0,1),
			\end{equation}
	signifying the corresponding dihedral angle.}
	where $\alpha$ satisfies for an $N\in\mathbb{N}$, $N\geq 3$,  
	\begin{equation}
         \alpha\neq \frac{q}{p}, \ \  p=1,2,\cdots, N-1, \  q=1,2,\cdots, p-1,
	\end{equation}
	then $u$ vanishes up to order at least $N$ at the edge-corner point $\mathbf 0$. 
\end{theorem}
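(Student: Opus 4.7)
My plan is to expand $u$ in spherical harmonics around the origin via Lemma~\ref{expansion} and to show, by induction on $n$, that the coefficients $a_n^m$ vanish for every $n$ with $0 \leq n \leq N-1$ and every $|m| \leq n$. Once this is established, the asymptotic $j_k(\sqrt\lambda\, r) = O(r^k)$ forces $u(\mathbf{x}) = O(r^N)$ as $r\to 0$, which through the integral criterion in Definition~\ref{def:3} yields $\mathrm{Vani}(u;\mathbf 0) \geq N$.

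After the coordinate normalisation so that $\Pi_1$ and $\Pi_2$ correspond in spherical coordinates to the azimuthal half-planes $\phi = 0$ and $\phi = \alpha \pi$ (both containing $\bsl$), the edge $\bsl$ lies in both nodal planes, so $u$ vanishes on $\bsl$ and Lemma~\ref{an0} gives $a_n^0 = 0$ for every $n \geq 0$. For the inductive step, assume that $a_k^m = 0$ for every $k \leq n-1$ and every admissible $m$. Since $j_k(t) = t^k / (2k+1)!! + O(t^{k+2})$, the only terms in the series reaching order $r^n$ come from the $j_n$ modes, so the nodal condition $u|_{\Pi_1} = 0$ matched at this order becomes the angular identity
\[
\sum_{m=-n}^n a_n^m c_n^m P_n^{|m|}(\cos\theta) = 0 \quad \text{for all } \theta\in(0,\pi),
\]
where the nonzero constants $c_n^m$ from Lemma~\ref{expansion} satisfy $c_n^k = c_n^{-k}$. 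The orthogonality relation in Lemma~\ref{base2} then decouples this identity by $|m|$ and yields $a_n^k + a_n^{-k} = 0$ for $k=1,\dots,n$. Repeating the same analysis on $\Pi_2$, i.e.\ at $\phi = \alpha \pi$, furnishes the companion relation $a_n^k e^{\mathrm i k\alpha\pi} + a_n^{-k} e^{-\mathrm i k\alpha\pi} = 0$.

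Eliminating $a_n^{-k}$ between these two relations collapses them to
\[
a_n^k \sin(k\alpha\pi) = 0, \qquad k=1,\dots,n.
\]
The hypothesis $\alpha \neq q/p$ for $1 \leq q < p \leq N-1$ is exactly the statement that $k\alpha \notin \mathbb Z$ for every $k \in \{1,\dots,N-1\}$, so $\sin(k\alpha\pi)\neq 0$ throughout that range. Therefore $a_n^k = a_n^{-k} = 0$ for $1 \leq k \leq n \leq N-1$, which together with $a_n^0 = 0$ closes the induction.

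The main technical obstacle I anticipate is the careful extraction of the $r^n$ coefficient from the spherical-wave series after invoking the inductive hypothesis: one must confirm that $j_n$ is the unique contributor at that order, which follows from the observation that each $j_k(\sqrt\lambda\, r)$ has leading order $r^k$ so that $j_k$ with $k > n$ cannot reach order $r^n$, while the $j_k$ with $k < n$ have already been eliminated inductively. A secondary subtlety is to verify that the vanishing of $u$ on $\Pi_1$ genuinely extends, through the analytic continuation built into Definition~\ref{def:1}, to the whole range $\theta \in (0,\pi)$ and not merely the interior angular range of the wedge, so that the Legendre orthogonality in Lemma~\ref{base2} can be applied in the form above.
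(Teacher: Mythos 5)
Your proposal is correct and follows essentially the same route as the paper's own proof: spherical wave expansion (Lemma~\ref{expansion}), $a_n^0=0$ via Lemma~\ref{an0} since $u$ vanishes on $\bsl$, then induction on $n$ matching the $r^n$ coefficient on $\phi=0$ and $\phi=\alpha\pi$ and using the Legendre orthogonality of Lemma~\ref{base2} to reduce to the $2\times 2$ system whose determinant is $-2\rmi\sin(m\alpha\pi)\neq 0$ under the rationality hypothesis. Your strong induction hypothesis ($a_k^m=0$ for all $k\le n-1$), which is needed to kill the $r^n$ contributions of $j_k$ with $k<n$ of the same parity, is in fact a slightly more careful statement of the step the paper carries out implicitly.
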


\begin{proof}
	Since $u|_{\Pi_i}\equiv0$, $i=1,2$, we have the following two equations by Lemma \ref{expansion}:
	\begin{align}
	u|_{\Pi_1}=& 4\pi\sum_{n=0}^{\infty}\sum_{m=-n}^{n}\bsi^na_n^mj_n(\sqrt{\lambda}r)\sqrt{\frac{2n+1}{4\pi}}\sqrt{\frac{(n-|m|)!}{(n+|m|)!}}P_n^{|m|}(\cos\theta)=0,\label{2nodal1}\\
	u|_{\Pi_2}=& 4\pi\sum_{n=0}^{\infty}\sum_{m=-n}^{n}\bsi^na_n^mj_n(\sqrt{\lambda}r)\sqrt{\frac{2n+1}{4\pi}}\sqrt{\frac{(n-|m|)!}{(n+|m|)!}}P_n^{|m|}(\cos\theta)e^{\bsi m\alpha\cdot\pi}=0,\label{2nodal2}
	\end{align}
	where $\phi=0$ on $\Pi_1$ and $\phi=\alpha\cdot\pi, \alpha\in(0,1)$ on $\Pi_2$. It is obvious that $u|_{\bsl}=0$, then from Lemma \ref{an0}, we have that \eqref{eq:an0} holds. Thus comparing the coefficient of $r$ and substituting $a_n^0=0$, $n=0,1$ into \eqref{2nodal1} and \eqref{2nodal2}, we have
    \begin{equation*}\label{2nodal1-sim}
    (a_1^1+a_1^{-1})P_1^1(\cos\theta)=0,\ \
    (a_1^1e^{\bsi\alpha\cdot\pi}+a_1^{-1}e^{-\bsi\alpha\cdot\pi})P_1^1(\cos\theta)=0.   	
    \end{equation*}
    Since $\theta\in(0, \pi)$ is arbitrary, utilizing the orthogonality condition (Lemma \ref{base2}), we can deduce that 
    \begin{equation*}\label{a11}
    a_1^1+a_1^{-1}=0,\ \
    a_1^1e^{\bsi\alpha\cdot\pi}+a_1^{-1}e^{-\bsi\alpha\cdot\pi}=0.
    \end{equation*}
    Therefore, if $\alpha\neq0, 1$, we can obtain that $a_1^{\pm1}=0$. 
    
    Assume that $a_{n-1}^m=0$, $m=\pm1,\pm2,\cdots,\pm(n-1)$. We next show by induction that $a_n^m=0$, $m=\pm1,\pm2,\cdots,\pm n$. Indeed, comparing the coefficient of $r^n$, we have
    \begin{align}
        \sum_{m=-n}^{n}\bsi^na_n^m\frac{\sqrt{\lambda}^n}{(2n+1)!!}\sqrt{\frac{2n+1}{4\pi}}\sqrt{\frac{(n-|m|)!}{(n+|m|)!}}P_n^{|m|}(\cos\theta)=&0,\label{2nodaln1}\\
         \sum_{m=-n}^{n}\bsi^na_n^m\frac{\sqrt{\lambda}^n}{(2n+1)!!}\sqrt{\frac{2n+1}{4\pi}}\sqrt{\frac{(n-|m|)!}{(n+|m|)!}}P_n^{|m|}(\cos\theta )e^{\bsi m\alpha\cdot\pi}=&0.\label{2nodaln2}
    \end{align}
    Similarly, substituting $a_n^0=0$ into \eqref{2nodaln1} and \eqref{2nodaln2}, noting $\theta$ is arbitrary, and utilizing the orthogonality condition (Lemma \ref{base2}) again we can derive that, for $m=1,2,\cdots$,
    \begin{equation}\notag
    a_n^m+a_n^{-m}=0,\ \
    a_n^me^{\bsi m\alpha\cdot\pi}+a_n^{-m}e^{-\bsi m\alpha\cdot\pi}=0.
    \end{equation}
    Hence if $\alpha\neq\frac{k}{m}$, $k=1,2,\cdots,m-1$, the coefficient matrix fulfills
    \begin{equation}\notag
    \left|\begin{array}{cc} 
    1 &  1 \\ 
    e^{\bsi m\alpha\cdot\pi} & e^{-\bsi m\alpha\cdot\pi}
    \end{array}\right| 
    =-2\bsi\sin m\alpha\cdot\pi\neq0,
    \end{equation}
    which yields that $a_n^m=0$ for $m=\pm1,\pm2,\cdots,\pm n$.
    
    The proof is complete.
\end{proof}

We proceed to consider the case that a nodal plane $\Pi_1\in\mathcal{N}^\lambda_\Omega$ intersects with a generalized singular plane $\Pi_2\in\mathcal{M}^\lambda_\Omega$.

\begin{theorem}\label{gene-nodal}
	Let $u$ be a Laplacian eigenfunction to \eqref{eq:eig}. Consider an edge corner ${\mathcal E}(\Pi_1, \Pi_2,\bsl)\Subset \Omega$ with
	{\color{black}{
	$$
	\Pi_1 \in \mathcal{N}^\lambda_\Omega, \quad\Pi_2 \in \mathcal{M}_\Omega^\lambda \quad\mbox{and}\quad \angle(\Pi_{1},\Pi_2)=\phi=\alpha\cdot\pi, \quad\alpha\in(0,1). 
	$$}}
	If for an $N\in\mathbb{N}$, $N\geq 2$, there holds
			\begin{equation}\notag
		\alpha\neq \frac{2q+1}{2p},\ \ p=1,2,\cdots, N-1,\ q=1,2,\cdots, p-1 ,
		\end{equation}
	then $u$ vanishes up to order at least $N$ at the edge-corner point $\mathbf 0$. 
\end{theorem}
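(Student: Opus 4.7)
The plan is to follow the induction strategy of the proof of Theorem~\ref{soundsoft2}, now replacing the second Dirichlet condition by the generalised impedance condition on $\Pi_2$. Since $\bsl\subset\Pi_1$ is nodal, we have $u|_\bsl=0$, so Lemma~\ref{an0} yields $a_n^0=0$ for every $n\geq 0$. It then suffices to show, by induction on $n=0,1,\ldots,N-1$, that $a_n^m=0$ for all $1\leq |m|\leq n$.

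Fix $n\leq N-1$ and assume $a_{n'}^m=0$ for every $n'<n$ and every $|m|\leq n'$. The nodal condition $u|_{\Pi_1}=0$ at $\phi=0$, together with the spherical wave expansion \eqref{expan}, the induction hypothesis, and the weighted orthogonality of Lemma~\ref{base2} applied by testing against $P_n^{|l|}(\cos\theta)$, produces
\begin{equation*}
a_n^l+a_n^{-l}=0,\qquad l=1,2,\ldots,n,
\end{equation*}
exactly as in the proof of Theorem~\ref{soundsoft2}. For $\Pi_2$ at $\phi=\alpha\pi$, Proposition~\ref{neumann bd} rewrites $(\partial_\nu u+\eta u)|_{\Pi_2}=0$ as $\pm(r\sin\theta)^{-1}\partial_\phi u+\eta u=0$. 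Since $\partial_\phi$ brings down a factor $\bsi m$, the $m=0$ modes are annihilated, while the $1/r$ prefactor shifts the leading power of the derivative term from $r^n$ down to $r^{n-1}$. At this $r^{n-1}$ level only the $a_n^m$ enter through the derivative piece, and every contribution from $\eta u$ carries $a_{n-1}^m$ which is zero by induction. A second application of Lemma~\ref{base2} to isolate the $|m|=l$ mode then yields
\begin{equation*}
e^{\bsi l\alpha\pi}a_n^l-e^{-\bsi l\alpha\pi}a_n^{-l}=0,\qquad l=1,2,\ldots,n.
\end{equation*}

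For each $l$, the two displays form a $2\times 2$ linear system in $(a_n^l,a_n^{-l})$ with determinant $-2\cos(l\alpha\pi)$, which is nonzero precisely when $\alpha\neq(2k+1)/(2l)$ for every integer $k$. Enforcing this for $l=1,\ldots,N-1$ matches the hypothesis on $\alpha$ in the statement and forces $a_n^l=a_n^{-l}=0$, closing the induction. The main technical subtlety is the matching of powers of $r$ across the two planes: because the $1/(r\sin\theta)$ prefactor in the normal derivative introduces a power shift, the Dirichlet relation at level $r^n$ must be paired with the impedance relation at level $r^{n-1}$, and it is precisely the $1/\sin\theta$ weight appearing in this shift that matches the weighted orthogonality of the associated Legendre functions in Lemma~\ref{base2}. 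Apart from this bookkeeping, the argument is a direct variant of that of Theorem~\ref{soundsoft2}, with $\sin$ replaced by $\cos$ in the nonvanishing determinant.
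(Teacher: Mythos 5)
Your proposal is correct and follows essentially the same route as the paper's proof: use Lemma~\ref{an0} to kill the $a_n^0$, expand via Lemma~\ref{expansion}, compare powers of $r$ on the two planes, apply the orthogonality of Lemma~\ref{base2}, and conclude from the $2\times 2$ system with determinant $-2\cos(m\alpha\pi)$. The only cosmetic difference is bookkeeping: the paper multiplies the impedance condition by $r\sin\theta$ and matches both relations at level $r^n$, whereas you keep the $1/(r\sin\theta)$ prefactor and pair the Dirichlet relation at $r^n$ with the impedance relation at $r^{n-1}$, which is the same computation.
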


\begin{proof}
	Since $u|_{\Pi_1}\equiv0$, it is direct to know that $u|_{\bsl}\equiv0$ which indicates that $a_n^0=0$ for $n=0,1,2,\cdots$ from Lemma \ref{an0}. Furthermore, by Lemma \ref{expansion} we have
	\begin{equation}\label{mix pi1}
	u|_{\Pi_1}=4\pi\sum_{n=0}^{\infty}\sum_{m=-n}^{n}\bsi^na_n^mj_n(\sqrt{\lambda}r)\sqrt{\frac{2n+1}{4\pi}}\sqrt{\frac{(n-|m|)!}{(n+|m|)!}}P_n^{|m|}(\cos\theta)=0.
	\end{equation}
	Combining with Proposition \ref{neumann bd}, we have the following expression on $\Pi_2$:
	\begin{align}\label{mix pi2}
	&\frac{\partial u}{\partial\nu}+\eta u\Big|_{\Pi_2}
	=\frac{1}{r\sin\theta}\frac{\partial u}{\partial \phi}+\eta u\Big|_{\phi=\alpha\cdot\pi}\notag\\
	=&\frac{1}{r\sin\theta}4\pi\sum_{n=0}^{\infty}\sum_{m=-n}^{n}\bsi^{n+1}ma_n^mj_n(\sqrt{\lambda}r)\sqrt{\frac{2n+1}{4\pi}}\sqrt{\frac{(n-|m|)!}{(n+|m|)!}}P_n^{|m|}(\cos\theta)e^{\bsi m\alpha\cdot\pi}\notag\\
	+&\eta\cdot4\pi\sum_{n=0}^{\infty}\sum_{m=-n}^{n}\bsi^{n}a_n^mj_n(\sqrt{\lambda}r)\sqrt{\frac{2n+1}{4\pi}}\sqrt{\frac{(n-|m|)!}{(n+|m|)!}}P_n^{|m|}(\cos\theta)e^{\bsi m\alpha\cdot\pi}=0.
	\end{align}
	Since $\theta\in(0, \pi)$ and $r>0$, multiplying $r\sin\theta$ on the both sides of \eqref{mix pi2} we can obtain that 
	\begin{align}\label{mix pi2 sim}
	&\sum_{n=0}^{\infty}\sum_{m=-n}^{n}\bsi^{n+1}ma_n^mj_n(\sqrt{\lambda}r)\sqrt{\frac{2n+1}{4\pi}}\sqrt{\frac{(n-|m|)!}{(n+|m|)!}}P_n^{|m|}(\cos\theta)e^{\bsi m\alpha\cdot\pi}\notag\\
	&+\eta\cdot r\sin\theta\sum_{n=0}^{\infty}\sum_{m=-n}^{n}\bsi^{n}a_n^mj_n(\sqrt{\lambda}r)\sqrt{\frac{2n+1}{4\pi}}\sqrt{\frac{(n-|m|)!}{(n+|m|)!}}P_n^{|m|}(\cos\theta)e^{\bsi m\alpha\cdot\pi}=0.
	\end{align} 
	Following a similar argument to Theorem \ref{soundsoft2}, we compare the coefficient of $r$ in \eqref{mix pi1} and \eqref{mix pi2 sim} respectively. For \eqref{mix pi1} we know that
	\begin{equation*}\label{mix r1}
	\sum_{m=-1}^{1}\bsi a_1^m\frac{\sqrt{\lambda}}{3!!}\sqrt{\frac{3}{4\pi}}\sqrt{\frac{(1-|m|)!}{(1+|m|)!}}P_1^{|m|}(\cos\theta)=0.
	\end{equation*} 
	Since $a_1^0=0$, using Lemma \ref{base2} we can deduce that 
	\begin{equation}\label{mixa11}
	a_1^1+a_1^{-1}=0.
	\end{equation}
	For \eqref{mix pi2 sim}, we have 
	\begin{equation}\label{mixa112}
	\sum_{m=-1}^{1}ma_1^m\frac{\sqrt{\lambda}}{3!!}\sqrt{\frac{3}{4\pi}}\sqrt{\frac{(1-|m|)!}{(1+|m|)!}}P_1^{m}(\cos\theta)e^{\bsi m\alpha\cdot\pi}=0
	\end{equation}
	since $a_0^0=0$. By the orthogonality condition of $P_1^m$ for arbitrary $\theta\in(0, \pi)$ and the fact that $a_1^0=0$ we can simplify \eqref{mixa112} as 
	\begin{equation*}
	a_1^1e^{\bsi\alpha\cdot\pi}-a_1^{-1}e^{-\bsi\alpha\cdot\pi}=0.
	\end{equation*}
	Combining \eqref{mixa11} with \eqref{mixa112} we can obtain that if $\alpha\neq\frac{1}{2}$, then $a_1^{\pm1}=0$. By induction, we assume that $a_{n-1}^m=0$, $m=\pm1,\pm2,\cdots,\pm(n-1)$. Considering the coefficient of $r^n$ in \eqref{mix pi1}, we know that
	\begin{equation*}\label{mixan11}
	\sum_{m=-n}^{n}\bsi^na_n^m\frac{\sqrt{\lambda}^n}{(2n+1)!!}\sqrt{\frac{2n+1}{4\pi}}\sqrt{\frac{(n-|m|)!}{(n+|m|)!}}P_n^{|m|}(\cos\theta)=0,
	\end{equation*}
	in which we can derive 
	\begin{equation}\label{mixan12}
	a_n^m+a_n^{-m}=0 \quad\mbox{ for }m=1,2,\cdots
	\end{equation}
	by virtue of the fact that $a_n^0=0$ and Lemma \ref{base2}. Similarly, for \eqref{mix pi2 sim}, the coefficient of $r^n$ fulfills that
	\begin{align}\label{mixan21}
&\sum_{m=-n}^{n}\bsi^{n+1}ma_n^m\frac{\sqrt{\lambda}^n}{(2n+1)!!}\sqrt{\frac{2n+1}{4\pi}}\sqrt{\frac{(n-|m|)!}{(n+|m|)!}}P_n^{|m|}(\cos\theta)e^{\bsi m\alpha\cdot\pi}\notag\\
&+\eta\cdot \sin\theta\sum_{m=-(n-1)}^{n-1}\bsi^{n-1}a_{n-1}^m\frac{\sqrt{\lambda}^{n-1}}{(2n-1)!!}\sqrt{\frac{2n-1}{4\pi}}\sqrt{\frac{(n-1-|m|)!}{(n-1+|m|)!}}P_{n-1}^{|m|}(\cos\theta)e^{\bsi m\alpha\cdot\pi}=0.
\end{align} 
    Substituting $a_{n-1}^m=0$, $m=\pm1, \pm2,\cdots,\pm(n-1)$, and $a_n^0=0$ into \eqref{mixan21}, utilizing Lemma \ref{base2} again we have
    \begin{equation}\label{mixan22}
    a_n^me^{\bsi m\alpha\cdot\pi}-a_n^{-m}e^{-\bsi m\alpha\cdot\pi}=0.
    \end{equation}
    Therefore, from \eqref{mixan12} and \eqref{mixan22}, we can derive that if $\alpha\neq\frac{2k+1}{2m}$ ($k=0,1,\cdots,m-1$), the coefficient matrix satisfies
    \begin{equation}\notag
    \left|\begin{array}{cc} 
    1 &  1 \\ 
    e^{\bsi m\alpha\cdot\pi} & -e^{-\bsi m\alpha\cdot\pi}
    \end{array}\right| 
    =-2\cos m\alpha\cdot\pi\neq0.
    \end{equation}
    which implies that $a_n^m=0$, $m=\pm1,\pm2,\cdots,\pm n$.
    
    The proof is complete.
\end{proof}

It is straightforward to verify that in Theorem~\ref{gene-nodal}, one can take $\eta\equiv 0$. In such a case, one has

\begin{corollary}\label{sing-nodal}
	Let $u$ be a Laplacian eigenfunction to \eqref{eq:eig}. Consider an edge corner ${\mathcal E}(\Pi_1, \Pi_2,\bsl)\Subset \Omega$ with
	{\color{black}{
	$$
	\Pi_1 \in \mathcal{N}^\lambda_\Omega, \quad\Pi_2 \in \mathcal{S}_\Omega^\lambda \quad\mbox{and}\quad \angle(\Pi_{1},\Pi_2)=\phi=\alpha\cdot\pi, \quad\alpha\in(0,1). 
	$$}}
	If for an $N\in\mathbb{N}$, $N\geq 2$, there holds
			\begin{equation}\notag
		\alpha\neq \frac{2q+1}{2p},\ \ p=1,2,\cdots, N-1,\ q=1,2,\cdots, p-1 ,
		\end{equation}
	then $u$ vanishes up to order at least $N$ at the edge-corner point $\mathbf 0$. 
\end{corollary}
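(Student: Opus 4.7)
The plan is essentially to reduce Corollary~\ref{sing-nodal} to Theorem~\ref{gene-nodal} by taking $\eta \equiv 0$. By Definition~\ref{def:1}, a singular plane is by convention a generalized singular plane with trivial impedance $\eta \equiv 0$, so $\mathcal{S}^\lambda_\Omega \subset \mathcal{M}^\lambda_\Omega$. With this identification, the hypotheses of Theorem~\ref{gene-nodal} are fulfilled verbatim, and its conclusion delivers the claim.

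To make sure nothing in the argument secretly used $\eta \neq 0$, I would retrace the key steps of Theorem~\ref{gene-nodal} in the degenerate case. The nodal condition on $\Pi_1$ still yields $a_n^0 = 0$ for all $n$ via Lemma~\ref{an0}, and the coefficient-comparison on $\Pi_1$ at order $r^n$ continues to give $a_n^m + a_n^{-m} = 0$ for $m \geq 1$. On $\Pi_2$, setting $\eta = 0$ eliminates the second sum in \eqref{mix pi2}, \eqref{mix pi2 sim} and \eqref{mixan21}; comparing coefficients of $r^n$ in the surviving first sum and applying Lemma~\ref{base2} yields
\begin{equation*}
a_n^m e^{\bsi m \alpha \pi} - a_n^{-m} e^{-\bsi m \alpha \pi} = 0, \quad m = 1, 2, \ldots, n.
\end{equation*}
The resulting $2 \times 2$ linear system has the same determinant $-2\cos(m\alpha\pi)$ as before, which is nonzero exactly when $\alpha \neq (2q+1)/(2p)$ in the stated range. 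The induction on $n$ then proceeds identically.

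I expect no genuine obstacle here: if anything, the $\eta \equiv 0$ case is easier than Theorem~\ref{gene-nodal}, because in that proof one needs the inductive hypothesis $a_{n-1}^m = 0$ to kill the $\eta \sin\theta$ tail coming from the $(n-1)$-st order term, while in the singular case this tail is absent from the outset. Consequently the proof reduces to a single sentence citing Theorem~\ref{gene-nodal} with $\eta \equiv 0$.
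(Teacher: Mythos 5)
Your proposal is correct and coincides with the paper's own treatment: the paper obtains Corollary~\ref{sing-nodal} precisely by noting that a singular plane is a generalised singular plane with $\eta\equiv 0$ and that the proof of Theorem~\ref{gene-nodal} goes through verbatim (indeed more simply, since the $\eta\sin\theta$ tail terms drop out), leaving the same determinant $-2\cos(m\alpha\pi)$ condition. Your additional verification that no step secretly requires $\eta\neq 0$ is exactly the ``straightforward verification'' the paper alludes to.
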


%


\subsection{Vanishing orders at an edge-corner point intersected by generalized singular planes}

In this subsection, we consider the case that an edge corner ${\mathcal E}(\Pi_1, \Pi_2,\bsl)$ is intersected by two generalised singular planes, namely  $ \Pi_\ell  \in \mathcal{M}^\lambda_\Omega$, $\ell=1,2$. In what follows, we signify the boundary parameters on $\Pi_\ell$ to be $\eta_\ell$, $\ell=1,2$. We can derive the following three theorems. 


\begin{theorem}\label{2generalize}
	Let $u$ be a Laplacian eigenfunction to \eqref{eq:eig}. Consider an edge corner ${\mathcal E}(\Pi_1, \Pi_2,\bsl)\Subset\Omega$ with $ \Pi_\ell  \in \mathcal{M}^\lambda_\Omega, \ell=1,2 $ and $\angle(\Pi_{1},\Pi_2)=\phi=\alpha\cdot\pi$ for $\alpha\in(0,1)$. If there exits a sufficiently small radius  $\varepsilon\in \R_+$ such that
	\begin{equation}\label{Thm:211 cond2}
		u|_{ B_\varepsilon(\mathbf{0}) \cap \bsl}\equiv0,
	\end{equation}
	and for an $N\in\mathbb{N}$, $N\geq 3$,
	\begin{equation}\notag
	 \alpha\neq \frac{q}{p},\ \ p=1,2,\cdots,N-1,\ q=1, 2, \cdots, p-1,
	\end{equation}
then $u$ vanishes up to the order at least $N$ at the edge-corner point $\mathbf 0$.
\end{theorem}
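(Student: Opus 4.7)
The plan is to follow the template set by Theorems~\ref{soundsoft2} and~\ref{gene-nodal}, adapted to the situation where both intersecting planes now carry the generalised singular condition $\partial_\nu u+\eta_\ell u=0$, $\ell=1,2$. In those earlier theorems the eigenfunction automatically vanished along the edge $\bsl$ because at least one plane was nodal; here neither is, so the additional hypothesis~\eqref{Thm:211 cond2} is required precisely to trigger Lemma~\ref{an0} and secure $a_n^0=0$ for every $n\in\mathbb N\cup\{0\}$. With this in hand I would substitute the spherical wave expansion of Lemma~\ref{expansion} into the two boundary identities and, via Proposition~\ref{neumann bd}, replace the normal derivatives by $\pm(r\sin\theta)^{-1}\partial_\phi u$. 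Multiplying through by $r\sin\theta$ converts the conditions into two identities in which the $\partial_\phi u$-part has its lowest radial power coming from $j_n(\sqrt\lambda r)\sim r^n$, whereas the coupling term $\eta_\ell r\sin\theta\, u$ shifts every radial power upward by one.

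I would then run an induction on $n$ to establish $a_n^m=0$ for every $0\le n\le N-1$ and every $m$. The base case $n=1$ is immediate: since $a_0^0=0$, the $\eta_\ell$-terms are of order $O(r^2)$ and so invisible at the $r^1$-level, and the surviving identities reduce to
\begin{equation*}
a_1^1-a_1^{-1}=0,\qquad a_1^1 e^{\bsi\alpha\pi}-a_1^{-1}e^{-\bsi\alpha\pi}=0,
\end{equation*}
whose determinant $2\bsi\sin(\alpha\pi)$ is nonzero for $\alpha\in(0,1)$. For the inductive step, assuming $a_k^m=0$ for every $k\le n-1$ and every $m$, the $r^n$-coefficient of $\eta_\ell r\sin\theta\, u$ can only involve $a_k^m$ with $k\in\{n-1,n-3,\ldots\}$, each of which vanishes by the inductive hypothesis combined with $a_n^0=0$. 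Hence only the leading $r^n$-contribution of $j_n(\sqrt\lambda r)$ in $\partial_\phi u$ remains. Pairing the $\pm m$ modes and invoking the orthogonality of Lemma~\ref{base2} to strip off the $\theta$-dependence, I arrive, for each $m=1,\ldots,n$, at the $2\times 2$ system
\begin{equation*}
a_n^m-a_n^{-m}=0,\qquad a_n^m e^{\bsi m\alpha\pi}-a_n^{-m} e^{-\bsi m\alpha\pi}=0,
\end{equation*}
whose determinant equals $2\bsi\sin(m\alpha\pi)$.

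The rationality exclusion $\alpha\neq q/p$ for $p=1,\ldots,N-1$ and $q=1,\ldots,p-1$ then guarantees $\sin(m\alpha\pi)\neq 0$ for every $m\in\{1,\ldots,n\}$ whenever $n\le N-1$, forcing $a_n^{\pm m}=0$ and advancing the induction. Once $a_n^m=0$ has been established for all $0\le n\le N-1$ and all $m$, the spherical expansion yields $u(\bfx)=\Oh(|\bfx|^N)$ near the origin, whence Definition~\ref{def:3} delivers $\mathrm{Vani}(u;\mathbf 0)\ge N$. The main technical obstacle is the careful bookkeeping at the inductive step: one must confirm that the tail contributions of $j_k(\sqrt\lambda r)$ for $k<n$, appearing at radial powers $r^{k+2},r^{k+4},\ldots$ in $\partial_\phi u$ and at $r^{k+1},r^{k+3},\ldots$ in $r\sin\theta\, u$, are all annihilated at order $r^n$ by the combined inductive hypothesis and $a_n^0=0$, so that the reduction to the clean $2\times 2$ system above is genuinely valid.
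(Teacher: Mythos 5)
Your proposal is correct and follows essentially the same route as the paper: Lemma~\ref{an0} for $a_n^0=0$, the spherical expansion with Proposition~\ref{neumann bd}, comparison of $r^n$-coefficients, orthogonality via Lemma~\ref{base2}, and the $2\times 2$ systems with determinant $\pm 2\bsi\sin(m\alpha\pi)$ killed by the rationality exclusion. Your use of the cumulative (strong) induction hypothesis to annihilate the Bessel-tail contributions at order $r^n$ is in fact a slightly more careful bookkeeping than the paper's write-up, which states only $a_{n-1}^m=0$, but it is the same argument in substance.
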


\begin{proof}
	Since $u|_{\Pi_i}=\frac{\partial u}{\partial \nu}+\eta_iu=0$, $i=1,2$,  we have by using Proposition \ref{neumann bd} that
    \begin{equation}\notag
    \begin{split}
    \frac{\partial u}{\partial\nu}+\eta_1 u\Big|_{\Pi_1}=-\frac{1}{r\sin\theta}\frac{\partial u}{\partial \phi}+\eta_1 u\Big|_{\phi=0}=0,\\
      \frac{\partial u}{\partial\nu}+\eta_2 u\Big|_{\Pi_2}=\frac{1}{r\sin\theta}\frac{\partial u}{\partial \phi}+\eta_2 u\Big|_{\phi=\alpha\cdot\pi}=0,
      \end{split}
    \end{equation}
    which can be written more explicitly in spherical coordinate system by Lemma \ref{expansion} as:
    \begin{align}\label{impe1}
    &-\sum_{n=0}^{\infty}\sum_{m=-n}^{n}\bsi^{n+1}ma_n^mj_n(\sqrt{\lambda}r)\sqrt{\frac{2n+1}{4\pi}}\sqrt{\frac{(n-|m|)!}{(n+|m|)!}}P_n^{|m|}(\cos\theta)\notag\\
    &+C_1r\sin\theta\sum_{n=0}^{\infty}\sum_{m=-n}^{n}\bsi^{n}a_n^mj_n(\sqrt{\lambda}r)\sqrt{\frac{2n+1}{4\pi}}\sqrt{\frac{(n-|m|)!}{(n+|m|)!}}P_n^{|m|}(\cos\theta)=0,
    \end{align}
    and 
    \begin{align}\label{impe2}
    &\sum_{n=0}^{\infty}\sum_{m=-n}^{n}\bsi^{n+1}ma_n^mj_n(\sqrt{\lambda}r)\sqrt{\frac{2n+1}{4\pi}}\sqrt{\frac{(n-|m|)!}{(n+|m|)!}}P_n^{|m|}(\cos\theta)e^{\bsi m\alpha\cdot\pi}\notag\\
    &+C_2r\sin\theta\sum_{n=0}^{\infty}\sum_{m=-n}^{n}\bsi^{n}a_n^mj_n(\sqrt{\lambda}r)\sqrt{\frac{2n+1}{4\pi}}\sqrt{\frac{(n-|m|)!}{(n+|m|)!}}P_n^{|m|}(\cos\theta)e^{\bsi m\alpha\cdot\pi}=0.
    \end{align}
    Under the  condition \eqref{Thm:211 cond2}, from Lemma \ref{an0} we know that
  \begin{equation}\label{eq:242an0}
  	a_n^0=0,\quad \mbox{ for } n=0,1,2,\cdots. 
  \end{equation}  Comparing the coefficient of $r^1$ in \eqref{impe1} and \eqref{impe2} respectively we have:
    \begin{equation}\notag
    \begin{split}
    \sum_{m=-1}^{1}ma_1^m\frac{\sqrt{\lambda}}{3!!}\sqrt{\frac{3}{4\pi}}\sqrt{\frac{(1-|m|)!}{(1+|m|)!}}P_1^{|m|}(\cos\theta)=0,\\
        -\sum_{m=-1}^{1}ma_1^m\frac{\sqrt{\lambda}}{3!!}\sqrt{\frac{3}{4\pi}}\sqrt{\frac{(1-|m|)!}{(1+|m|)!}}P_1^{|m|}(\cos\theta)e^{\bsi m\alpha\cdot\pi}=0.
    \end{split}
    \end{equation}
    Utilizing the orthogonality condition (Lemma \ref{base2}) and the fact that $a_1^0=0$ we can obtain the linear system with respect to $a_1^{\pm1}$ as 
    \begin{equation*}
    a_1^1-a_1^{-1}=0,\ \ a_1^1e^{\bsi \alpha\cdot\pi}-a_1^{-1}e^{-\bsi \alpha\cdot\pi}=0.
    \end{equation*}
    Since $\alpha\in(0, 1)$, which indicates that $\phi\neq0, \pi$, it is easy to see that $a_1^{\pm1}=0$. Using the same argument, by induction, we assume that
    \begin{equation}\label{eq:244anm}
    	a_{n-1}^m=0, \quad m=\pm1,\pm2,\cdots,\pm(n-1).
    \end{equation}
     Then by considering the coefficient of $r^n$ in \eqref{impe1} and \eqref{impe2} we have
        \begin{align}\label{impe1-1}
    &-\sum_{m=-n}^{n}\bsi^{n+1}ma_n^m\frac{\sqrt{\lambda}^n}{(2n+1)!!}\sqrt{\frac{2n+1}{4\pi}}\sqrt{\frac{(n-|m|)!}{(n+|m|)!}}P_n^{|m|}(\cos\theta)\notag\\
    &+C_1\sin\theta\sum_{m=-(n-1)}^{n-1}\bsi^{n-1}a_{n-1}^m\frac{\sqrt{\lambda}^{n-1}}{(2n-1)!!}\sqrt{\frac{2n-1}{4\pi}}\sqrt{\frac{(n-1-|m|)!}{(n-1+|m|)!}}P_{n-1}^{|m|}(\cos\theta)=0,
    \end{align}
    and 
    \begin{align}\label{impe2-1}
    &\sum_{m=-n}^{n}\bsi^{n+1}ma_n^m\frac{\sqrt{\lambda}^n}{(2n+1)!!}\sqrt{\frac{2n+1}{4\pi}}\sqrt{\frac{(n-|m|)!}{(n+|m|)!}}P_n^{|m|}(\cos\theta)e^{\bsi m\alpha\cdot\pi}\notag\\
    &+C_2\sin\theta\sum_{m=-(n-1)}^{n-1}\bsi^{n-1}a_{n-1}^m\frac{\sqrt{\lambda}^{n-1}}{(2n-1)!!}\sqrt{\frac{2n-1}{4\pi}}\sqrt{\frac{(n-1-|m|)!}{(n-1+|m|)!}}P_{n-1}^{|m|}(\cos\theta)e^{\bsi m\alpha\cdot\pi}=0.
    \end{align}
  By induction,   substituting \eqref{eq:242an0} and \eqref{eq:244anm}
    into \eqref{impe1-1} and \eqref{impe2-1}, using Lemma \ref{base2} we can deduce that for $m\in\mathbb{N}_+$,
    \begin{equation}\label{impe-anm}
    \begin{cases}
    &a_n^m-a_n^{-m}=0,\\
    &a_n^me^{\bsi m\alpha\cdot\pi}-a_n^{-m}e^{-\bsi m\alpha\cdot\pi}=0.
    \end{cases}
    \end{equation}
    Hence if $\alpha\neq\frac{k}{m}$, $k=1,2,\cdots,m-1$, the coefficient matrix of \eqref{impe-anm} fulfills 
    \begin{equation}\notag
    \left|\begin{array}{cc} 
    1 &  -1 \\ 
    e^{\bsi m\alpha\cdot\pi} & -e^{-\bsi m\alpha\cdot\pi}
    \end{array}\right| 
    =2\bsi\sin m\alpha\cdot\pi\neq0.
    \end{equation}
    which implies that $a_n^m=0$, $m=\pm1,\pm2,\cdots,\pm n$.
    
    The proof is complete.
\end{proof}

\begin{remark}\label{nec condi1}
	It is very important and necessary to assume that $u|_{ B_\varepsilon(\mathbf{0}) \cap \bsl}\equiv0$ in Theorem \ref{2generalize}, where we use this condition to obtain $a_n^0=0$, $n=0,1,2,\cdots$. Without this assumption we can not derive the recursive equations with respect to $a_n^m$ from \eqref{impe1} and \eqref{impe2} to obtain the vanishing results.
\end{remark}

\begin{remark}\label{rem:gg1}
It is straightforward to verify that in the proof of Theorem~\ref{2generalize}, $\eta_1$ and/or $\eta_2$ can be zero. That is, Theorem~\ref{2generalize} also includes the case that at least one of the two planes $\Pi_\ell$ is a singular plane. We choose not to present those results in order to avoid repetition. 
\end{remark}

\section{Vanishing orders at vertex-corner points}\label{sec3}

In this section, we study the vanishing property of the Laplacian eigenfunction to \eqref{eq:eig} at a vertex-corner point associated with a vertex corner $\mathcal{V}(\{\Pi_\ell\}_{\ell=1}^n, \bfx_0)\Subset\Omega$, where $\Pi_\ell$ could be either a nodal plane, or a singular plane or a generalized singular plane. It is known that an edge corner  ${\mathcal E}(\Pi_1, \Pi_2,\bsl) $ can be regarded as a part of a vertex corner $\mathcal{V}(\{\Pi_\ell\}_{\ell=1}^n, \bfx_0)$. In Section \ref{sec2}, we have unveiled that the vanishing order of the eigenfunction $u$ at an edge-corner point can be determined by the the intersecting dihedral angle of ${\mathcal E}(\Pi_1, \Pi_2,\bsl) $ under some generic condition \eqref{Thm:211 cond2}. However, in this section, we concentrate on another condition
\begin{equation}\label{eq:cond sec}
	u(\bfx_0)=0,
\end{equation}
to study the vanishing property of $u$ at $\bfx_0$. We should point out that \eqref{eq:cond sec} is much more relaxed compared to \eqref{Thm:211 cond2}, and it can be easily fulfilled in certain generic case, e.g, superpositions of four eigenfunctions at the point $\bfx_0$. In particular, such a condition \eqref{eq:cond sec} can be used to show the unique determination of some polyhedral obstacles in $\R^3$ by finitely many measurements in the inverse obstacle scattering problem. We shall give more detailed discussions in Section \ref{sec5}.

Similar to Section \ref{sec2}, without loss of generality, we assume that the vertex-corner point $\bfx_0$ of $\mathcal{V}(\{\Pi_\ell\}_{\ell=1}^n, \bfx_0)$ coincides with the origin. We first focus on the case that $n=3$, which implies that the vertex corner $\mathcal{V}(\{\Pi_\ell\}_{\ell=1}^3, \bfx_0)$ is formed by three planes;  see Figure \ref{3d vertex corner} for a schematic illustration. For $n>3$, the related results can be derived in a similar way; see Theorem \ref{3d vertex-n plane}--\ref{3d vertex-n plane22}. It is obvious that $\mathcal{V}(\{\Pi_\ell\}_{\ell=1}^3, \bfx_0)$ is formed by three edge corners $\mathcal{E}(\Pi_1,\Pi_2, \bsl_1 )$, $\mathcal{E}(\Pi_2,\Pi_3, \bsl_2 )$  and $\mathcal{E}(\Pi_3,\Pi_1, \bsl_3 )$ where $\bsl_1,\, \bsl_2$ and  $\bsl_3$ are three line segments of $\Pi_1\cap \Pi_2$, $\Pi_2\cap \Pi_3$ and $\Pi_3\cap \Pi_1$ respectively. Hence, if either one of the three planes $\Pi_\ell$ is nodal, say $\Pi_3$, then one can apply the results in Section 2 to the edge corners $\mathcal{E}(\Pi_2,\Pi_3, \bsl_2 )$  and $\mathcal{E}(\Pi_3,\Pi_1, \bsl_3 )$ to derive a certain vanishing order at the vertex-corder point, by regarding it as an edge-corner point associated with $\mathcal{E}(\Pi_2,\Pi_3, \bsl_2 )$  and $\mathcal{E}(\Pi_3,\Pi_1, \bsl_3 )$, respectively. Hence, in such a case, we shall mainly focus on the vanishing order generated through the intersection the two planes $\Pi_1$ and $\Pi_2$, both of which are assumed not to be nodal.


\begin{figure}[htbp]\label{3d vertex corner}
	\centering
	\includegraphics[width=0.3\linewidth]{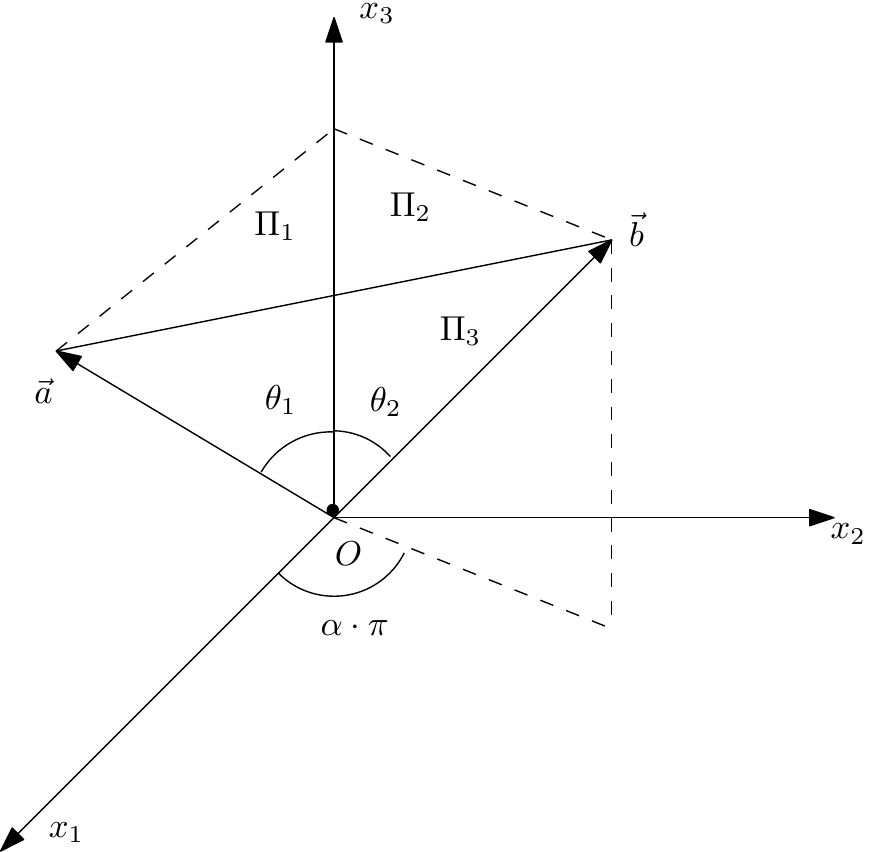}
	\caption{Schematic illustration of a vertex corner that is intersected by $\Pi_1$, $\Pi_2$ and $\Pi_3$.}
	\label{fig:3d-vertex2}
\end{figure}

\begin{theorem}\label{3d vertex-thm1}
	Let $u$ be a Laplacian eigenfunction to \eqref{eq:eig}. Consider a vertex corner ${\mathcal V}(\{\Pi_\ell\}_{\ell=1}^3,{\mathbf 0})\Subset \Omega$ with $ \Pi_\ell  \in \mathcal{M}^\lambda_\Omega $, $\ell=1,2$, $\angle(\Pi_{1},\Pi_2)=\phi=\alpha\cdot\pi$, $\alpha\in(0,1)$ and  $\Pi_3\in\mathcal{N}_\Omega^\lambda$. Assume that $\Pi_3=\mathrm{span}\{\vec{a},\vec{b}\}$, where $\vec{a}=(r, \theta_1, 0)\in\Pi_1\cap\Pi_3$ and $\vec{b}=(r, \theta_2, \alpha\cdot\pi)\in\Pi_2\cap\Pi_3$ for $r>0$, $\alpha\in(0,1)$, and fixed $\theta_1$ and $\theta_2$ in the spherical coordinate system. If for an $N\in\mathbb{N}$, $N\geq 3$, there holds 
\begin{equation}\notag
 P_p^0(\cos\theta_i)\neq0, i=1\mbox{ or }2,\mbox{ and }\alpha\neq \frac{q}{p}, p=1,2,\cdots,N-1,\ q=1, 2,\cdots, p-1,
\end{equation}
where $P_p^0$ is the Legendre polynomial, then the vanishing order of $u$ at $\mathbf 0$ generated by the intersection of the two planes $\Pi_1$ and $\Pi_2$ is at least up to order $N$. 
\end{theorem}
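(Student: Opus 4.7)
The plan is to establish vanishing order at least $N$ at $\mathbf{0}$ by proving inductively, for $n=0,1,\ldots,N-1$, that every coefficient $a_n^m$ with $|m|\le n$ in the spherical wave expansion of Lemma~\ref{expansion} vanishes. The induction will simultaneously exploit four independent conditions at the vertex: the generalised singular conditions $(\partial_\nu u+\eta_\ell u)|_{\Pi_\ell}=0$ on $\Pi_1$ (at $\phi=0$) and on $\Pi_2$ (at $\phi=\alpha\cdot\pi$), together with the nodal vanishing of $u$ along the two rays $\Pi_1\cap\Pi_3$ (direction $\vec a=(r,\theta_1,0)$) and $\Pi_2\cap\Pi_3$ (direction $\vec b=(r,\theta_2,\alpha\cdot\pi)$). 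The last two are immediate, since $\Pi_3$ is nodal and both rays lie in $\Pi_3$.

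For the inductive step, assume $a_{n'}^m=0$ for every $n'<n$ and $|m|\le n'$. I would first mimic the derivation in the proof of Theorem~\ref{2generalize}: substituting the expansion from Lemma~\ref{expansion} into the two generalised singular conditions, rewriting $\partial_\nu$ via Proposition~\ref{neumann bd} as $\pm(r\sin\theta)^{-1}\partial_\phi$, multiplying through by $r\sin\theta$, and matching the coefficient of $r^n$. The induction hypothesis annihilates the $\eta_\ell u$ contributions (which only involve $a_{n-1}^m$), and the associated Legendre orthogonality of Lemma~\ref{base2} then yields, for each $m=1,\ldots,n$,
\begin{equation*}
a_n^m-a_n^{-m}=0,\qquad a_n^m\,e^{\bsi m\alpha\cdot\pi}-a_n^{-m}\,e^{-\bsi m\alpha\cdot\pi}=0.
\end{equation*}
The determinant of this $2\times 2$ system is $2\bsi\sin(m\alpha\cdot\pi)$, which is nonzero because $\alpha\neq q/m$ for $q=1,\ldots,m-1$ by hypothesis (as $1\le m\le n\le N-1$). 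Hence $a_n^{\pm m}=0$ for every $m=1,\ldots,n$.

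The remaining coefficient $a_n^0$ is invisible to the conditions on $\Pi_1$ and $\Pi_2$, since the operator $\partial_\phi$ annihilates the $\phi$-independent mode $Y_n^0$. This is exactly where the nodal plane $\Pi_3$ becomes indispensable. Evaluating $u(r,\theta_1,0)=0$ at order $r^n$ and discarding the terms with $m\neq 0$ that have just been eliminated leaves the single relation
\begin{equation*}
\bsi^n a_n^0\,\frac{\sqrt{\lambda}^n}{(2n+1)!!}\sqrt{\frac{2n+1}{4\pi}}\,P_n^0(\cos\theta_1)=0.
\end{equation*}
An identical computation along the ray $\vec b$ (where the phase $e^{\bsi\cdot 0\cdot\alpha\cdot\pi}=1$ again collapses the sum to the $m=0$ summand) produces the analogue with $\theta_2$ in place of $\theta_1$. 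By the hypothesis that $P_n^0(\cos\theta_i)\neq 0$ for at least one $i\in\{1,2\}$, we conclude $a_n^0=0$, closing the induction. The base case $n=0$ is immediate from $u(\mathbf 0)=0$ and $P_0^0\equiv 1$.

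The main obstacle lies in controlling $a_n^0$: the two generalised singular conditions carry no information about the axisymmetric mode because of the $\partial_\phi$ differentiation, so the induction cannot be closed through $\Pi_1$ and $\Pi_2$ in isolation. Bringing in the nodal plane $\Pi_3$ supplies two further scalar constraints, one along each of the edges $\Pi_\ell\cap\Pi_3$, and the nondegeneracy condition $P_p^0(\cos\theta_i)\neq 0$ for some $i$ guarantees that at least one of these rays is actually sensitive to the mode $Y_n^0$. This simultaneous use of three planes of mixed type, rather than just two, is what distinguishes the vertex-corner analysis here from the edge-corner results of Section~\ref{sec2}.
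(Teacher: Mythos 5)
Your proposal is correct and follows essentially the same route as the paper's proof: the spherical wave expansion of Lemma~\ref{expansion} with Proposition~\ref{neumann bd}, the orthogonality of Lemma~\ref{base2} giving the $2\times2$ systems with determinant $2\bsi\sin(m\alpha\cdot\pi)$ for $a_n^{\pm m}$, and the nodal rays $\vec a$, $\vec b$ in $\Pi_3$ used to eliminate the axisymmetric coefficient $a_n^0$ under the condition $P_n^0(\cos\theta_i)\neq 0$ (cf. Remark~\ref{vec a}). Your use of the strong induction hypothesis (all $a_{n'}^m=0$ for $n'<n$) is in fact a slightly more careful bookkeeping of the higher-order terms of the spherical Bessel functions than the paper's stated hypothesis, but the argument is the same.
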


\begin{proof}
	Since $\Pi_1$ and $\Pi_2$ are two generalized singular planes, we have 
	\begin{equation}\label{thm1-1}
	\frac{\partial u}{\partial\nu}+\eta_1u\Big|_{\Pi_1}=0\quad\mbox{ and }\quad\frac{\partial u}{\partial\nu}+\eta_2u\Big|_{\Pi_2}=0.
	\end{equation}
	By Proposition \ref{neumann bd} and Lemma \ref{expansion}, we can write \eqref{thm1-1} explicitly as 
	\begin{align}\label{thm1-2-1}
	&-\frac{1}{r\sin\theta}\frac{\partial u}{\partial \phi}+\eta_1u\Big|_{\phi=0}\notag\\
	&=-\frac{1}{r\sin\theta}4\pi\sum_{n=0}^{\infty}\sum_{m=-n}^{n}\bsi^{n+1}ma_n^mj_n(\sqrt{\lambda}r)\sqrt{\frac{2n+1}{4\pi}}\sqrt{\frac{(n-|m|)!}{(n+|m|)!}}P_n^{|m|}(\cos\theta)\notag\\
	&+\eta_14\pi\sum_{n=0}^{\infty}\sum_{m=-n}^{n}\bsi^{n}a_n^mj_n(\sqrt{\lambda}r)\sqrt{\frac{2n+1}{4\pi}}\sqrt{\frac{(n-|m|)!}{(n+|m|)!}}P_n^{|m|}(\cos\theta)=0,
	\end{align} 
	and 
	\begin{align}\label{thm1-2-2}
	&\frac{1}{r\sin\theta}\frac{\partial u}{\partial \phi}+\eta_2u\Big|_{\phi=\alpha\cdot\pi}\notag\\
	&=\frac{1}{r\sin\theta}4\pi\sum_{n=0}^{\infty}\sum_{m=-n}^{n}\bsi^{n+1}ma_n^mj_n(\sqrt{\lambda}r)\sqrt{\frac{2n+1}{4\pi}}\sqrt{\frac{(n-|m|)!}{(n+|m|)!}}P_n^{|m|}(\cos\theta)e^{\bsi m \alpha\cdot\pi}\notag\\
	&+\eta_24\pi\sum_{n=0}^{\infty}\sum_{m=-n}^{n}\bsi^{n}a_n^mj_n(\sqrt{\lambda}r)\sqrt{\frac{2n+1}{4\pi}}\sqrt{\frac{(n-|m|)!}{(n+|m|)!}}P_n^{|m|}(\cos\theta)e^{\bsi m\alpha\cdot\pi}=0.
	\end{align} 
	Since $\Pi_3=\mbox{span}\{\vec{a},\vec{b}\}$, where $\vec{a}=(r,\theta_1,0)\in\Pi_1\cap\Pi_3$ and $\vec{b}=(r,\theta_2,\alpha\cdot\pi)\in\Pi_2\cap\Pi_3$ for fixed $\theta_1$, $\theta_2$ and $u|_{\Pi_3}\equiv0$. It is direct to see  $u|_{\vec{a}}=u|_{\vec{b}}=0$, which further indicates that
	\begin{equation}\label{thm-1-3}
	u|_{\vec{a}}=4\pi\sum_{n=0}^{\infty}\sum_{m=-n}^{n}\bsi^{n}a_n^mj_n(\sqrt{\lambda}r)\sqrt{\frac{2n+1}{4\pi}}\sqrt{\frac{(n-|m|)!}{(n+|m|)!}}P_n^{|m|}(\cos\theta_1)=0,
	\end{equation}
	and 
	\begin{equation}\label{thm-1-3-2}
	u|_{\vec{b}}=4\pi\sum_{n=0}^{\infty}\sum_{m=-n}^{n}\bsi^{n}a_n^mj_n(\sqrt{\lambda}r)\sqrt{\frac{2n+1}{4\pi}}\sqrt{\frac{(n-|m|)!}{(n+|m|)!}}P_n^{|m|}(\cos\theta_2)e^{\bsi m\alpha\cdot\pi}=0.
	\end{equation}
	
	Combining with \eqref{thm1-2-1} and \eqref{thm1-2-2}, it suffices to use \eqref{thm-1-3} or \eqref{thm-1-3-2} to study the coefficients of $r^n$, $n\in\mathbb{N}$. In what follows, without loss of generality, we discuss \eqref{thm-1-3} for instance. Since $u|_{\vec{a}}\equiv0$, the coefficient of $r^0$ fulfills that 
	\begin{equation}\notag
	4\pi a_0^0\sqrt{\frac{1}{4\pi}}P_0^0(\cos\theta_1)=0,
	\end{equation} 
	where we can know that $a_0^0=0$ since $P_0^0\equiv1$.
	Consider the coefficient of $r$, from \eqref{thm1-2-1}, \eqref{thm1-2-2} and \eqref{thm-1-3}, we can respectively see that
	\begin{equation}\label{thm1-r1}
	\sum_{m=-1}^{1}ma_1^m\frac{\sqrt{\lambda}}{3!!}\sqrt{\frac{3}{4\pi}}\sqrt{\frac{(1-|m|)!}{(1+|m|)!}}P_1^{|m|}(\cos\theta)+C_1\sin\theta a_0^0\sqrt{\frac{1}{4\pi}}P_0^0(\cos\theta)=0,
	\end{equation}
	\begin{equation}\label{thm1-r12}
	\sum_{m=-1}^{1}ma_1^m\frac{\sqrt{\lambda}}{3!!}\sqrt{\frac{3}{4\pi}}\sqrt{\frac{(1-|m|)!}{(1+|m|)!}}P_1^{|m|}(\cos\theta)e^{\bsi m\alpha\cdot\pi}
	-C_2\sin\theta\sum_{m=-1}^{1}a_0^0\sqrt{\frac{1}{4\pi}}P_0^0(\cos\theta)e^{\bsi m\alpha\cdot\pi}=0,
	\end{equation}
	\begin{equation}\label{thm1-r13}
	\sum_{m=-1}^{1}\bsi a_1^m\frac{\sqrt{\lambda}}{3!!}\sqrt{\frac{3}{4\pi}}\sqrt{\frac{(1-|m|)!}{(1+|m|)!}}P_1^{|m|}(\cos\theta_1)=0.
	\end{equation}
	Substituting $a_0^0=0$ into \eqref{thm1-r1} and \eqref{thm1-r12}, combining with Lemma \ref{base2}, we can directly derive the following linear system with respect to $a_1^{\pm1}$:
	\begin{equation}\label{thm1-a1}
	a_1^1-a_1^{-1}=0,\ \
	a_1^1e^{\bsi\alpha\cdot\pi}-a_1^{-1}e^{-\bsi\alpha\cdot\pi}=0.
	\end{equation}
	Thus we know that $a_1^{\pm1}=0$ since $\alpha\in(0,1)$. As a consequence, in \eqref{thm1-r13}, if $P_1^0(\cos\theta_1)\neq0$, we can deduce that $a_1^0=0$ easily. 
	
	By induction, we assume that $a_{n-1}^m=0$ for $m=0,\pm1,\pm2,\cdots,\pm(n-1)$. Then considering the coefficient of $r^n$, by \eqref{thm1-2-1}, \eqref{thm1-2-2} and \eqref{thm-1-3}, we have
	\begin{align}\label{thm1-rn1}
	&-\sum_{m=-n}^{n}\bsi^{n+1}ma_n^m\frac{\sqrt{\lambda}^n}{(2n+1)!!}\sqrt{\frac{2n+1}{4\pi}}\sqrt{\frac{(n-|m|)!}{(n+|m|)!}}P_n^{|m|}(\cos\theta)\notag\\
	&+C_1\sin\theta\sum_{m=-(n-1)}^{n-1}\bsi^{n-1}a_{n-1}^m\frac{\sqrt{\lambda}^{n-1}}{(2n-1)!!}\sqrt{\frac{2n-1}{4\pi}}\sqrt{\frac{(n-1-|m|)!}{(n-1+|m|)!}}P_{n-1}^{|m|}(\cos\theta)=0,
	\end{align}
	\begin{align}\label{thm1-rn2}
	&\sum_{m=-n}^{n}\bsi^{n+1}ma_n^m\frac{\sqrt{\lambda}^n}{(2n+1)!!}\sqrt{\frac{2n+1}{4\pi}}\sqrt{\frac{(n-|m|)!}{(n+|m|)!}}P_n^{|m|}(\cos\theta)e^{\bsi m\alpha\cdot\pi}\notag\\
    &+C_2\sin\theta\sum_{m=-(n-1)}^{n-1}\bsi^{n-1}a_{n-1}^m\frac{\sqrt{\lambda}^{n-1}}{(2n-1)!!}\sqrt{\frac{2n-1}{4\pi}}\sqrt{\frac{(n-1-|m|)!}{(n-1+|m|)!}}P_{n-1}^{|m|}(\cos\theta)e^{\bsi m \alpha\cdot\pi}=0,
	\end{align}
	and 
	\begin{equation}\label{thm1-rn3}
	\sum_{m=-n}^{n}\bsi^{n}a_n^m\frac{\sqrt{\lambda}^n}{(2n+1)!!}\sqrt{\frac{2n+1}{4\pi}}\sqrt{\frac{(n-|m|)!}{(n+|m|)!}}P_n^{|m|}(\cos\theta_1)=0.
	\end{equation}
	Utilizing the assumption $a_{n-1}^m=0$ for $n=0,\pm1,\pm2,\cdots, \pm(n-1)$ in \eqref{thm1-rn1} and \eqref{thm1-rn2}, from the orthogonality condition in Lemma \ref{base2}, we know that for $m\in\mathbb{N}_+$, $a_n^m$ satisfies
	\begin{equation}\label{thm1-an}
	a_n^m-a_n^{-m}=0,\ \
	a_n^me^{\bsi m \alpha\cdot\pi}-a_n^{-m}e^{-\bsi m\alpha\cdot\pi}.
	\end{equation}
	Therefore, if $\alpha\neq\frac{k}{m}$, $k=1,2,\cdots,m-1$, then the coefficient matrix fulfils
	\begin{equation}\notag
	\left|\begin{array}{cc} 
	1 &  -1 \\ 
	e^{\bsi m\alpha\cdot\pi} & -e^{-\bsi m\alpha\cdot\pi}
	\end{array}\right| 
	=2\bsi\sin m\alpha\cdot\pi\neq0,
	\end{equation}
	and thus $a_n^m=0$ for $m=\pm1,\pm2,\cdots,\pm n$. Now we are in a position to show that $a_n^0=0$. Indeed, substituting $a_n^m=0$, $m=\pm1,\pm2,\cdots,\pm n$ into \eqref{thm1-rn3}, we can obtain that if $P_n^0(\cos\theta_1)\neq0$, then $a_n^0=0$, which completes the proof.	
\end{proof}

In the above proof of Theorem \ref{3d vertex-thm1}, we have analyzed the condition $u|_{\vec{a}}=0$ for illustration. For the condition $u|_{\vec{b}}=0$, we also give the discussion as the following remark.

\begin{remark}\label{vec a}
In the proof of Theorem \ref{3d vertex-thm1}, instead of \eqref{thm-1-3}, if we use \eqref{thm-1-3-2} combining with \eqref{thm1-2-1} and \eqref{thm1-2-2} to consider the coefficient of $r^n$, $n\in\mathbb{N}$, we know that for $r$, \eqref{thm1-r13} becomes 
\begin{equation}\label{rem1-r1}
\sum_{m=-1}^{1}\bsi a_1^m\frac{\sqrt{\lambda}}{3!!}\sqrt{\frac{3}{4\pi}}\sqrt{\frac{(1-|m|)!}{(1+|m|)!}}P_1^{|m|}(\cos\theta_2)e^{\bsi m \alpha\cdot\pi}=0.
\end{equation}
Since we know $a_1^{\pm1}=0$ by \eqref{thm1-r1} and \eqref{thm1-r12}, in \eqref{rem1-r1} we can obtain that if $P_1^0(\cos\theta_2)\neq0$, then $a_1^0=0$. By induction, in order to study $a_n^0$, we replace \eqref{thm1-rn3} by 
\begin{equation}\label{rem1-rn}
\sum_{m=-n}^{n}\bsi^n a_n^m\frac{\sqrt{\lambda}^n}{(2n+1)!!}\sqrt{\frac{2n+1}{4\pi}}\sqrt{\frac{(n-|m|)!}{(n+|m|)!}}P_n^{|m|}(\cos\theta_2)e^{\bsi m \alpha\cdot\pi}=0.
\end{equation}
Substituting $a_n^m=0$, $m=\pm1,\pm2,\cdots,\pm n$, which is derived from \eqref{thm1-an} into \eqref{rem1-rn}, we can deduce that if $P_n^0(\cos\theta_2)\neq0$, then $a_n^0=0$. 

Hence, from the above discussion we know that it is actually equivalent to consider $u|_{\vec{a}}=0$ or $u|_{\vec{b}}=0$ in the proof of Theorem \ref{3d vertex-thm1}. Therefore, in our subsequent study, we shall only prove under the condition with respect to $\vec{a}$.
\end{remark}

In Theorem \ref{3d vertex-thm1}, we have considered the case that $\Pi_3\in\mathcal{N}_\Omega^\lambda$ is a nodal plane. Next, we study a more complicated case that $\Pi_3\in\mathcal{M}_\Omega^\lambda$ is a generalized singular plane.

\begin{theorem}\label{3d vertex-thm2}
Let $u$ be a Laplacian eigenfunction to \eqref{eq:eig}. Consider a vertex corner ${\mathcal V}(\{\Pi_\ell\}_{\ell=1}^3,{\mathbf 0})\Subset \Omega$ with $ \Pi_\ell  \in \mathcal{M}^\lambda_\Omega $, $\ell=1,2,3$ and $\angle(\Pi_{1},\Pi_2)=\phi=\alpha\cdot\pi$, $\alpha\in(0,1)$. Assume that $\Pi_3=\mathrm{span}\{\vec{a},\vec{b}\}$, where $\vec{a}=(r, \theta_1, 0)\in\Pi_1\cap\Pi_3$ and $\vec{b}=(r, \theta_2, \alpha\cdot\pi)\in\Pi_2\cap\Pi_3$ for $r>0$, $\alpha\in(0,1)$, and fixed $\theta_1\in (0, \pi)$ and $\theta_2\in (0, \pi)$ in the spherical coordinate system. If for an $N\in\mathbb{N}$, $N\geq 3$, there holds 
\begin{equation}\notag
 P_p^1(\cos\theta_i)\neq0, i=1\mbox{ or }2,\mbox{ and }\alpha\neq \frac{q}{p}, p=1,2,\cdots,N-1,\ q=1, 2,\cdots, p-1,
\end{equation}
where $P_p^1$ is the Legendre polynomial, then the vanishing order of $u$ at $\mathbf 0$ generated by the intersection of the two planes $\Pi_1$ and $\Pi_2$ is at least up to order $N$. 
\end{theorem}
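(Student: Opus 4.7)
The proof plan is to mirror the argument of Theorem~\ref{3d vertex-thm1}, with the only substantive change being the treatment of the plane $\Pi_3$. I would start from the spherical-wave expansion of $u$ around $\mathbf 0$ (Lemma~\ref{expansion}) and apply Proposition~\ref{neumann bd} to write the generalized singular conditions on $\Pi_1$ (at $\phi=0$) and $\Pi_2$ (at $\phi=\alpha\cdot\pi$) explicitly in terms of the coefficients $a_n^m$; these are literally equations \eqref{thm1-2-1} and \eqref{thm1-2-2}. An induction on $n$ is then set up: assuming $a_{n-1}^m=0$ for all $|m|\le n-1$, comparing the coefficient of $r^n$ in the $\Pi_1$ and $\Pi_2$ relations, invoking the orthogonality of the associated Legendre functions (Lemma~\ref{base2}) and the rationality condition $\alpha\ne q/p$, one obtains the same $2\times 2$ determinant as in Theorem~\ref{3d vertex-thm1} and concludes $a_n^m=0$ for $1\le|m|\le n$. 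The task left is to show $a_n^0=0$ from the $\Pi_3$ condition.

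To this end I would restrict $(\partial_\nu u+\eta_3 u)|_{\Pi_3}=0$ to the ray $\vec a\subset \Pi_1\cap\Pi_3$, parametrised by $\theta=\theta_1$, $\phi=0$. The key observation is that, since $\vec a$ lies in $\Pi_3$, the unit normal $\nu_3$ to $\Pi_3$ is perpendicular to $\hat r=\hat a$ at every point of $\vec a$; expanding $\nabla u$ in the spherical orthonormal frame $\{\hat r,\hat\theta,\hat\phi\}$, the radial component drops out, and one obtains
$$\partial_{\nu_3}u|_{\vec a}=\frac{c_1}{r}\,\partial_\theta u\big|_{(\theta_1,0)}+\frac{c_2}{r\sin\theta_1}\,\partial_\phi u\big|_{(\theta_1,0)},$$
with $c_1=\nu_3\cdot\hat{\theta}_a$, $c_2=\nu_3\cdot\hat{\phi}_a$. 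A short computation based on the relation $\nu_3\perp\hat b$, together with $\alpha\in(0,1)$ and $\theta_2\in(0,\pi)$, gives $c_1\propto \sin\theta_2\sin(\alpha\cdot\pi)\ne 0$, so the tangential-$\theta$ coefficient is nondegenerate.

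Multiplying the restricted $\Pi_3$ relation by $r$ and reading off the $r^n$ coefficient, the $\eta_3 u$ term vanishes by the inductive hypothesis $a_{n-1}^m=0$, and the $\partial_\phi u$ term vanishes because we have already shown $a_n^m=0$ for $m\ne 0$. What remains is a single term proportional to $a_n^0\cdot\tfrac{d}{d\theta}P_n^0(\cos\theta_1)$, which is a nonzero multiple of $a_n^0\,P_n^1(\cos\theta_1)$; the hypothesis $P_n^1(\cos\theta_1)\ne 0$ then forces $a_n^0=0$, closing the induction. If instead $P_n^1(\cos\theta_2)\ne 0$, the same argument is run on the ray $\vec b\subset\Pi_2\cap\Pi_3$, exactly as in Remark~\ref{vec a}; this accounts for the ``$i=1$ or $2$'' alternative in the hypothesis.

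The main obstacle, compared with Theorem~\ref{3d vertex-thm1}, is the careful spherical-coordinate bookkeeping of the normal-derivative operator on $\Pi_3$, which is neither a coordinate plane nor one of the planes $\Pi_1,\Pi_2$ already handled. It is precisely the $\theta$-differentiation of the axisymmetric $m=0$ mode that shifts the relevant associated Legendre index from $0$ (as in Theorem~\ref{3d vertex-thm1}) up to $1$ in the present statement.
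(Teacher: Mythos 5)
Your treatment of $\Pi_3$ is exactly the paper's: the paper also restricts $\partial_\nu u+\eta_3 u=0$ to the ray $\vec a$, computes $\nu=\vec b\times\vec a$, obtains
$\frac{1}{r}\partial_\theta u\,\sin\theta_2\sin(\alpha\cdot\pi)+\frac{1}{r\sin\theta_1}\partial_\phi u\,(\sin\theta_1\cos\theta_2-\sin\theta_2\cos\theta_1\cos\alpha\cdot\pi)+\eta_3 u=0$ at $(\theta_1,0)$, kills the $\partial_\phi$ and $\eta_3$ terms by $a_n^{\pm m}=0$ and the induction hypothesis, and is left with $\sin\theta_1\sin\theta_2\sin(\alpha\cdot\pi)\,a_n^0\,(-P_n^1(\cos\theta_1))=0$, i.e. the same shift from $P_n^0$ to $P_n^1$ via $\frac{d}{d\theta}P_n^0(\cos\theta)=-P_n^1(\cos\theta)$; the alternative with $\vec b$ and $\theta_2$ is likewise the paper's remark. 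So in its main mechanism the proposal is correct and coincides with the paper's argument.

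The one genuine gap is that your induction is never anchored. You say you mirror Theorem \ref{3d vertex-thm1}, but there the base case $a_0^0=0$ came from the nodal condition $u|_{\vec a}=0$ at order $r^0$; here $\Pi_3$ is generalized singular, and the order-$r^0$ content of the restricted impedance relation is vacuous (the $n=0$ mode has no angular dependence), so it yields nothing. The paper gets $a_0^0=0$ from $u(\mathbf 0)=0$, which is the standing condition \eqref{eq:cond sec} of Section \ref{sec3} (and is written explicitly into the hypotheses of the companion Theorem \ref{3d vertex-n plane22}); your proposal never invokes it. This matters because without $a_0^0=0$ the first step fails: the coefficient of $r$ in \eqref{thm1-2-1}--\eqref{thm1-2-2} contains the terms $\eta_\ell\sin\theta\,a_0^0P_0^0(\cos\theta)$, and since $\sin\theta\,P_0^0(\cos\theta)$ is proportional to $P_1^1(\cos\theta)$, these are not separated from the $a_1^{\pm1}$ terms by the orthogonality of Lemma \ref{base2}; one then cannot conclude $a_1^{\pm1}=0$, nor subsequently $a_1^0=0$, and the whole recursion collapses. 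You should therefore state $u(\mathbf 0)=0$ (hence $a_0^0=0$) as the starting point and carry out the $n=1$ step explicitly, after which your inductive argument goes through as written.
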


\begin{proof}
	Since $\Pi_i$, $i=1,2,3$, are three generalized singular planes, we have 
	\begin{equation*}\label{thm2-1}
	\frac{\partial u}{\partial\nu}+\eta_1u\Big|_{\Pi_1}=0,\quad \frac{\partial u}{\partial\nu}+\eta_2u\Big|_{\Pi_2}=0\quad\mbox{and}\quad
	\frac{\partial u}{\partial\nu}+\eta_3u\Big|_{\Pi_3}=0.
	\end{equation*}
	From Theorem \ref{3d vertex-thm1}, we have already known that $u$ satisfies \eqref{thm1-2-1} and \eqref{thm1-2-2} on $\Pi_1$ and $\Pi_2$ respectively. Besides, by Remark \ref{vec a}, we can obtain that 
	\begin{equation}\label{thm2-2-1}
	\frac{\partial u}{\partial \nu}+\eta_3u\Big|_{\vec{a}}=0.
	\end{equation}
	Since $\Pi_3=\mbox{span}\{\vec{a},\vec{b}\}$, which implies that $\nu=\vec{b}\times\vec{a}=(\sin\theta_2\sin(\alpha\cdot\pi)\cos\theta_1, \sin\theta_1\cos\theta_2-\sin\theta_2\cos(\alpha\cdot\pi)\cos\theta_1, -\sin\theta_1\sin\theta_2\sin(\alpha\cdot\pi))^{\mathrm{T}}$, we know that \eqref{thm2-2-1} can be written more precisely as 
	\begin{align}\label{thm2-2-2}
	&\frac{\partial u}{\partial\nu}+\eta_3u\Big|_{\vec{a}}
	=\frac{1}{r}\frac{\partial u}{\partial \theta}\sin\theta_2\sin(\alpha\cdot\pi)+\frac{1}{r\sin\theta_1}\frac{\partial u}{\partial \phi}\notag\\
	&\cdot(\sin\theta_1\cos\theta_2-\sin\theta_2\cos\theta_1\cos\alpha\cdot\pi)+\eta_3u\Big|_{\theta=\theta_1, \phi=0}=0.
	\end{align}
	By Lemma \ref{expansion}, multiplying $r\sin\theta_1$ on the both sides of \eqref{thm2-2-2}, the equation can be simplified as
	\begin{align}\label{thm2-2-3}
	&\sin\theta_1\sin\theta_2\sin(\alpha\cdot\pi)\sum_{n=0}^{\infty}\sum_{m=-n}^{n}\bsi^na_n^mj_n(\sqrt{\lambda}r)\sqrt{\frac{2n+1}{4\pi}}\sqrt{\frac{(n-|m|)!}{(n+|m|)!}}\frac{dP_n^{|m|}(\cos\theta)}{d\theta}\Big|_{\theta=\theta_1}\notag\\
	&+(\sin\theta_1\cos\theta_2-\sin\theta_2\cos\theta_1\cos\alpha\cdot\pi)\sum_{n=0}^{\infty}\sum_{m=-n}^{n}\bsi^{n+1}ma_n^mj_n(\sqrt{\lambda}r)\sqrt{\frac{2n+1}{4\pi}}\sqrt{\frac{(n-|m|)!}{(n+|m|)!}}\notag\\
	&\cdot P_n^{|m|}(\cos\theta_1)+\eta_3\sin\theta_1r\sum_{n=0}^{\infty}\sum_{m=-n}^n\bsi^na_n^mj_n(\sqrt{\lambda}r)\sqrt{\frac{2n+1}{4\pi}}\sqrt{\frac{(n-|m|)!}{(n+|m|)!}}P_n^{|m|}(\cos\theta_1)=0.
	\end{align}
    Since $u(\mathbf{ 0})=0$, we know that $a_0^0=0$. Combining \eqref{thm1-2-1}, \eqref{thm1-2-2} with \eqref{thm2-2-3}, the corresponding coefficients of $r$ respectively fulfil that 
	\begin{equation}\label{thm2-r1}
    \sum_{m=-1}^{1}ma_1^m\frac{\sqrt{\lambda}}{3!!}\sqrt{\frac{3}{4\pi}}\sqrt{\frac{(1-|m|)!}{(1+|m|)!}}P_1^{|m|}(\cos\theta)+\eta_1\sin\theta a_0^0\sqrt{\frac{1}{4\pi}}P_0^0(\cos\theta)=0,
    \end{equation}
    \begin{equation}\label{thm2-r12}
    \sum_{m=-1}^{1}ma_1^m\frac{\sqrt{\lambda}}{3!!}\sqrt{\frac{3}{4\pi}}\sqrt{\frac{(1-|m|)!}{(1+|m|)!}}P_1^{|m|}(\cos\theta)e^{\bsi m\alpha\cdot\pi}
    -\eta_2\sin\theta\sum_{m=-1}^{1}a_0^0\sqrt{\frac{1}{4\pi}}P_0^0(\cos\theta)e^{\bsi m\alpha\cdot\pi}=0,
    \end{equation}
    and
    \begin{align}\label{thm2-r13}
    &\sin\theta_1\sin\theta_2\sin(\alpha\cdot\pi)\sum_{m=-1}^{1}\bsi a_1^m \frac{\sqrt{\lambda}}{3!!}\sqrt{\frac{3}{4\pi}}\sqrt{\frac{(1-|m|)!}{(1+|m|)!}}\frac{dP_1^{|m|}(\cos\theta)}{d\theta}\Big|_{\theta=\theta_1}\notag\\
    &-(\sin\theta_1\cos\theta_2-\sin\theta_2\cos\theta_1\cos(\alpha\cdot\pi))\sum_{m=-1}^{1}ma_1^m\frac{\sqrt{\lambda}}{3!!}\sqrt{\frac{3}{4\pi}}\sqrt{\frac{(1-|m|)!}{(1+|m|)!}}P_1^{|m|}(\cos\theta_1)\notag\\
    &+\eta_3\sin\theta_1a_0^0\sqrt{\frac{1}{4\pi}}P_0^{0}(\cos\theta_1)=0.
    \end{align}
    Substituting $a_0^0=0$ into \eqref{thm2-r1} and \eqref{thm2-r12}, utilizing the orthogonality condition we can derive that
    \begin{equation}\label{thm2-a1}
    a_1^1-a_1^{-1}=0,\ \
    a_1^1e^{\bsi\alpha\cdot\pi}-a_1^{-1}e^{-\bsi\alpha\cdot\pi}=0,
    \end{equation}
    which yields $a_1^{\pm1}=0$ from the fact that $\alpha\in (0, 1)$. In addition, in \eqref{thm2-r13}, taking $a_0^0=a_1^{\pm1}=0$, we have
    \begin{equation}\label{thm2-r132}
    \sin\theta_1\sin\theta_2\sin(\alpha\cdot\pi)\bsi a_1^0\frac{\sqrt{\lambda}}{3!!}\sqrt{\frac{3}{4\pi}}(-P_1^1(\cos\theta_1))=0.
    \end{equation}
    Hence, by the assumptions on $\theta_1, \theta_2$ and $\alpha$, we can obtain that $a_1^0=0$ if $P_1^1(\cos\theta_1)\neq0$.
    
    Proving by induction, we assume that $a_{n-1}^m=0$ for $m=0,\pm1,\pm2,\cdots\pm(n-1)$. Then considering the coefficients of $r^n$ in \eqref{thm1-2-1}, \eqref{thm1-2-2} and \eqref{thm2-2-3} accordingly we know that
    \begin{align}\label{thm2-rn1}
    &-\sum_{m=-n}^{n}\bsi^{n+1}ma_n^m\frac{\sqrt{\lambda}^n}{(2n+1)!!}\sqrt{\frac{2n+1}{4\pi}}\sqrt{\frac{(n-|m|)!}{(n+|m|)!}}P_n^{|m|}(\cos\theta)\notag\\
    &+\eta_1\sin\theta\sum_{m=-(n-1)}^{n-1}\bsi^{n-1}a_{n-1}^m\frac{\sqrt{\lambda}^{n-1}}{(2n-1)!!}\sqrt{\frac{2n-1}{4\pi}}\sqrt{\frac{(n-1-|m|)!}{(n-1+|m|)!}}P_{n-1}^{|m|}(\cos\theta)=0,
    \end{align}
    \begin{align}\label{thm2-rn2}
    &\sum_{m=-n}^{n}\bsi^{n+1}ma_n^m\frac{\sqrt{\lambda}^n}{(2n+1)!!}\sqrt{\frac{2n+1}{4\pi}}\sqrt{\frac{(n-|m|)!}{(n+|m|)!}}P_n^{|m|}(\cos\theta)e^{\bsi m\alpha\cdot\pi}\notag\\
    &+\eta_2\sin\theta\sum_{m=-(n-1)}^{n-1}\bsi^{n-1}a_{n-1}^m\frac{\sqrt{\lambda}^{n-1}}{(2n-1)!!}\sqrt{\frac{2n-1}{4\pi}}\sqrt{\frac{(n-1-|m|)!}{(n-1+|m|)!}}P_{n-1}^{|m|}(\cos\theta)e^{\bsi m \alpha\cdot\pi}=0,
    \end{align}
    and
    \begin{align}\label{thm2-rn3}
    &\sin\theta_1\sin\theta_2\sin(\alpha\cdot\pi)\sum_{m=-n}^{n}\bsi^na_n^m\frac{\sqrt{\lambda}^n}{(2n+1)!!}\sqrt{\frac{2n+1}{4\pi}}\sqrt{\frac{(n-|m|)!}{(n+|m|)!}}\frac{dP_n^{|m|}(\cos\theta)}{d\theta}\Big|_{\theta=\theta_1}\notag\\
    &+(\sin\theta_1\cos\theta_2-\sin\theta_2\cos\theta_1\cos\alpha\cdot\pi)\sum_{m=-n}^{n}\bsi^{n+1}ma_n^m\frac{\sqrt{\lambda}^n}{(2n+1)!!}\sqrt{\frac{2n+1}{4\pi}}\sqrt{\frac{(n-|m|)!}{(n+|m|)!}} \notag\\
    &\cdot P_n^{|m|}(\cos\theta_1)+\eta_3\sin\theta_1\sum_{m=-(n-1)}^{n-1}\bsi^{n-1}a_{n-1}^m\frac{\sqrt{\lambda}^{n-1}}{(2n-1)!!}\sqrt{\frac{2n-1}{4\pi}}\sqrt{\frac{(n-1-|m|)!}{(n-1+|m|)!}}P_{n-1}^{|m|}(\cos\theta_1)=0.
    \end{align}
    Using the assumption that $a_{n-1}^m=0$, $m=0,\pm1,\pm2,\cdots\pm(n-1)$ in \eqref{thm2-rn1} and \eqref{thm2-rn2}, similar to Theorem \ref{thm1-1}, we can obtain that if $\alpha\neq\frac{k}{m}$, $k=1,2,\cdots,m-1$, then $a_n^m=0$ for $m=\pm1,\pm2,\cdots\pm n$. Therefore, from \eqref{thm2-rn3}, we can deduce that
    \begin{equation}\label{thm2-an0}
    \sin\theta_1\sin\theta_2\sin(\alpha\cdot\pi)\bsi^na_n^0\frac{\sqrt{\lambda}^n}{(2n+1)!!}\sqrt{\frac{2n+1}{4\pi}}(-P_n^1(\cos\theta_1))=0,
    \end{equation}
    which indicates that if $P_n^1(\cos\theta_1)\neq0$, then $a_n^0=0$.
     
    The proof is complete.
\end{proof}

\begin{remark}
	Following a similar argument in Theorem \ref{3d vertex-thm2}, if we take into account the condition $\frac{\partial u}{\partial\nu}+\eta_3u\Big|_{\vec{b}}\equiv0$ on $\Pi_3$, then we can derive the same results with respect to $\theta_2$ instead of $\theta_1$.
\end{remark}

\begin{remark}\label{rem:ll1}
By direct verifications in the proof of Theorem~\ref{3d vertex-thm2}, one can show that either one of the boundary parameters $\eta_\ell$, $\ell=1,2,3$, can be taken to be zero. That means, the generalised singular planes in Theorem~\ref{3d vertex-thm1} can be replaced to be singular planes, and the vanishing results still hold. 
\end{remark}

In Theorems \ref{3d vertex-thm1} and \ref{3d vertex-thm2}, we consider the vanishing properties at a vertex-corner point that is intersected by three planes $(n=3)$. 
In fact, the similar arguments work for the case that $n>3$, in which the third plane no longer intersects with $\Pi_1$ or $\Pi_2$. Without loss of generality, we denote the third plane to be discussed by $\Pi_j=\mbox{span}\{\overrightarrow{OA_j}, \overrightarrow{OA_{j+1}}\}$, where $3\leq j \leq n$ and if $j=n$, we assume that $A_{n+1}:=A_1$. Let $\Pi_1$ coincide with the $(x_1, x_2)$-plane, $\Pi_2$ possesses a dihedral angle $\alpha\cdot\pi$ away from $\Pi_1$ in the anti-clockwise direction and $\overrightarrow{OA_2}$ lies on the $x_3$-axis; see Figure \ref{fig:3d-vertex3} for a schematic illustration.

\begin{figure}[htbp]
	\centering
	\includegraphics[width=0.3\linewidth]{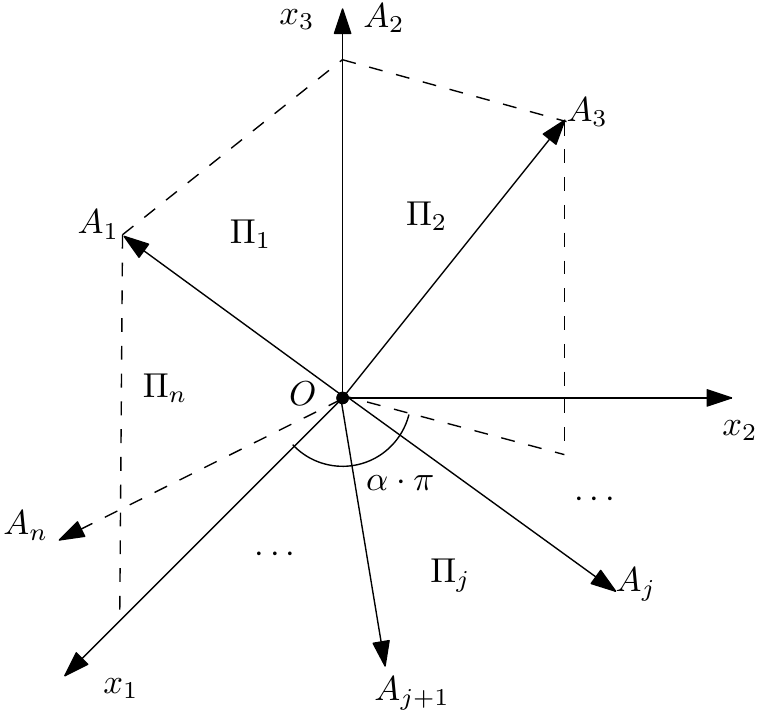}
	\caption{Schematic illustration of a vertex corner that is intersected by $\Pi_1$, $\Pi_2$, $\cdots$, $\Pi_n$ with $n>3$.}
	\label{fig:3d-vertex3}
\end{figure}

\begin{theorem}\label{3d vertex-n plane}
Let $u$ be a Laplacian eigenfunction to \eqref{eq:eig}. Consider a vertex corner ${\mathcal V}(\{\Pi_\ell\}_{\ell=1}^n,{\mathbf 0})\Subset \Omega$ as described above with $ \Pi_\ell  \in \mathcal{M}^\lambda_\Omega $, $\ell=1,2$, $\angle(\Pi_{1},\Pi_2)=\phi=\alpha\cdot\pi$, $\alpha\in(0,1)$, and $\Pi_j\in\mathcal{N}_\Omega^\lambda$. Assume that $\Pi_j=\mathrm{span}\{\overrightarrow{OA_j}, \overrightarrow{OA_{j+1}}\}$, where $\overrightarrow{OA_j}=(r, \theta_j, \phi_j)$ and $\overrightarrow{OA_{j+1}}=(r, \theta_{j+1}, \phi_{j+1})$ for $r>0$, $\theta_j,\theta_{j+1}\in(0, \pi)$, and $\phi_j, \phi_{j+1}\in(0, 2\pi)$ such that $0<\phi_{j+1}-\phi_j<\pi$ in the spherical coordinate system. If for an $N\in\mathbb{N}$, $N\geq 3$, there holds 
\begin{equation}\notag
  P_p^0(\cos\theta_\tau)\neq0, \tau=j\mbox{ or }j+1,\mbox{ and }\alpha\neq \frac{q}{p}, p=1,2,\cdots,N-1,\ q=1, 2,\cdots, p-1,
\end{equation}
where $P_p^0$ is the Legendre polynomial, then the vanishing order of $u$ at $\mathbf 0$ generated by the intersection of the two planes $\Pi_1$ and $\Pi_2$ is at least up to order $N$. 
\end{theorem}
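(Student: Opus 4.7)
The plan is to mimic the argument of Theorem~\ref{3d vertex-thm1} essentially verbatim, since the only structural novelty is that the nodal plane $\Pi_j$ is now a ``non-adjacent'' face of the polyhedral cone: the ray $\overrightarrow{OA_j}$ need not sit in $\Pi_1\cap\Pi_j$, so its azimuthal coordinate $\phi_j$ is a general number in $(0,2\pi)$ rather than being $0$ or $\alpha\pi$. First I would write out the spherical wave expansion \eqref{expan} of $u$ about the origin and record three identities: the two impedance identities on $\Pi_1$ ($\phi=0$) and $\Pi_2$ ($\phi=\alpha\pi$) exactly as in \eqref{thm1-2-1}--\eqref{thm1-2-2}, together with the nodal identity $u|_{\overrightarrow{OA_j}}\equiv 0$, namely
\[
\sum_{n=0}^{\infty}\sum_{m=-n}^{n}\bsi^n a_n^m j_n(\sqrt{\lambda} r)\sqrt{\tfrac{2n+1}{4\pi}}\sqrt{\tfrac{(n-|m|)!}{(n+|m|)!}}P_n^{|m|}(\cos\theta_j)e^{\bsi m\phi_j}=0,
\]
with an analogous identity along $\overrightarrow{OA_{j+1}}$ available in case one needs to pick $\tau=j+1$ in the hypothesis.

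The induction is started by reading off the coefficient of $r^0$ from the nodal identity, which gives $a_0^0=0$ since $P_0^0\equiv 1$. Assuming inductively that $a_{n-1}^m=0$ for all $|m|\le n-1$, I would match coefficients of $r^n$ in the two impedance identities, substitute the induction hypothesis, and apply Lemma~\ref{base2} to obtain, for each $m=1,\ldots,n$, the $2\times 2$ system
\[
a_n^m-a_n^{-m}=0, \qquad a_n^m e^{\bsi m\alpha\pi}-a_n^{-m}e^{-\bsi m\alpha\pi}=0,
\]
whose determinant $2\bsi\sin(m\alpha\pi)$ is non-zero under the hypothesis $\alpha\neq q/p$ for $p=1,\ldots,n$. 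This forces $a_n^m=0$ for all $m=\pm 1,\ldots,\pm n$.

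To kill the remaining unknown $a_n^0$ I would substitute the vanishing just obtained into the coefficient of $r^n$ in the nodal identity along $\overrightarrow{OA_j}$. Since the phase $e^{\bsi m\phi_j}$ is attached only to modes with $m\neq 0$, every surviving term collapses to $e^{0}=1$ and the identity reduces to
\[
\bsi^n a_n^0 \tfrac{\sqrt{\lambda}^n}{(2n+1)!!}\sqrt{\tfrac{2n+1}{4\pi}}\, P_n^0(\cos\theta_j)=0,
\]
so the assumption $P_n^0(\cos\theta_j)\neq 0$ (or the analogue along $\overrightarrow{OA_{j+1}}$ if $\tau=j+1$) yields $a_n^0=0$ and closes the induction up to $n=N-1$.

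No step is genuinely hard. The one point that deserves emphasis is precisely why the generalisation from Theorem~\ref{3d vertex-thm1} to non-adjacent $\Pi_j$ is essentially free: the only place the position of $\Pi_j$ could interfere is through the azimuthal phase $e^{\bsi m\phi_j}$, but by the time that phase appears in the $r^n$-coefficient it multiplies only modes that have already been annihilated by the $\Pi_1,\Pi_2$ system. Consequently $\phi_j$ drops out of the surviving equation, which explains why the hypothesis constrains only the polar angle through $P_p^0(\cos\theta_\tau)\neq 0$ and why the admissible set of intersection angles $\alpha$ remains independent of $j$.
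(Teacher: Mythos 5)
Your proposal is correct and follows essentially the same route as the paper's own proof: the two impedance identities on $\Pi_1$, $\Pi_2$ give the $2\times 2$ systems annihilating $a_n^{\pm m}$ ($m\neq 0$) under the rationality hypothesis on $\alpha$, and the nodal identity along $\overrightarrow{OA_j}$ (or $\overrightarrow{OA_{j+1}}$) then collapses to $a_n^0P_n^0(\cos\theta_j)=0$, exactly as in the paper. Your closing observation that the azimuthal phase $e^{\bsi m\phi_j}$ only attaches to already-vanished modes is precisely the reason the paper's Theorem~\ref{3d vertex-thm1} argument transfers verbatim to a non-adjacent nodal face.
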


\begin{proof}
	Since $\Pi_1$ and $\Pi_2$ are two generalized singular planes , we can derive \eqref{thm1-2-1} and \eqref{thm1-2-2} immediately. Considering $\Pi_j$, we know that $u|_{\Pi_j}=0$, which indicates that $u|_{\overrightarrow{OA_j}}\equiv0$ and $u|_{\overrightarrow{OA_{j+1}}}\equiv0$. By Remark \ref{vec a}, it suffices to analyze $u|_{\overrightarrow{OA_j}}\equiv0$ as follows:
	\begin{equation}\label{thm4-1}
	u|_{\overrightarrow{OA_j}}=4\pi\sum_{n=0}^{\infty}\sum_{m=-n}^{n}\bsi^na_n^mj_n(\sqrt{\lambda}r)\sqrt{\frac{2n+1}{4\pi}}\sqrt{\frac{(n-|m|)!}{(n+|m|)!}}P_n^{|m|}(\cos\theta_j)e^{\bsi m\phi_j}=0.
	\end{equation}
	Taking $m=n=0$ in \eqref{thm4-1} we have $4\pi a_0^0\sqrt{\frac{1}{4\pi}}P_0^0(\cos\theta_j)=0$,
	where we can derive $a_0^0=0$ since $P_0^0\equiv1$. Thus from \eqref{thm1-2-1}, \eqref{thm1-2-2} and \eqref{thm4-1}, we know that the coefficient of $r$ satisfies \eqref{thm1-a1} and thus $a_1^{\pm1}=0$. Moreover, we have
	\begin{equation}\label{thm4-a1}
	\sum_{m=-1}^1\bsi a_1^m\frac{\sqrt{\lambda}}{3!!}\sqrt{\frac{3}{4\pi}}\sqrt{\frac{(1-|m|)!}{(1+|m|)!}}P_1^{|m|}(\cos\theta_j)e^{\bsi m \phi_j}=0,
	\end{equation}
	which can be further simplified as $a_1^0P_1^0(\cos\theta_j)=0$
	after substituting $a_1^{\pm1}=0$ into \eqref{thm4-a1}. Hence, it is easy to see that $a_1^0=0$ if $P_1^0(\cos\theta_j)\neq0$.
	
	By induction, we assume that $a_{n-1}^m=0$ for $m=0,\pm1,\pm2,\cdots\pm(n-1)$. Considering the coefficient of $r^n$, we can obtain \eqref{thm1-rn1} and \eqref{thm1-rn2} which induce \eqref{thm1-an} as well as the following equation
	\begin{equation}\label{thm4-an}
	\sum_{m=-n}^n\bsi^na_n^m\frac{\sqrt{\lambda}^n}{(2n+1)!!}\sqrt{\frac{2n+1}{4\pi}}\sqrt{\frac{(n-|m|)!}{(n+|m|)!}}P_n^{|m|}(\cos\theta_j)e^{\bsi m\phi_j}=0.
	\end{equation}
	Since we have already known that if $\alpha\neq\frac{k}{m}$, $k=1,2,\cdots-m-1$, then $a_n^m=0$ for $m=\pm1,\pm2,\cdots,\pm n$ from \eqref{thm1-an}. Substituting this result into \eqref{thm4-an}, we can deduce that 
	\begin{equation*}\label{thm4-an2}
	a_n^0P_n^0(\cos\theta_j)=0.
	\end{equation*}
	Therefore, we can derive that $a_n^0=0$ if $\theta_j$ fulfills that $P_n^0(\cos\theta_j)\neq0$. Similarly, if we utilize the condition $u|_{\overrightarrow{OA_{j+1}}}\equiv0$, then the same argument and results work for $\theta_{j+1}$, which completes the proof.
\end{proof}

We proceed to consider the case that $\Pi_j$ is a generalised singular plane instead of being nodal in Theorem~\ref{3d vertex-n plane}. We have


\begin{theorem}\label{3d vertex-n plane22}
Let $u$ be a Laplacian eigenfunction to \eqref{eq:eig}. Consider a vertex corner ${\mathcal V}(\{\Pi_\ell\}_{\ell=1}^n,{\mathbf 0})\Subset \Omega$ as described before with $ \Pi_\ell  \in \mathcal{M}^\lambda_\Omega $, $\ell=1,2$, $\angle(\Pi_{1},\Pi_2)=\phi=\alpha\cdot\pi$, $\alpha\in(0,1)$, and $\Pi_j\in\mathcal{M}_\Omega^\lambda$. Assume that $\Pi_j=\mathrm{span}\{\overrightarrow{OA_j}, \overrightarrow{OA_{j+1}}\}$, where $\overrightarrow{OA_j}=(r, \theta_j, \phi_j)$ and $\overrightarrow{OA_{j+1}}=(r, \theta_{j+1}, \phi_{j+1})$ for $r>0$, $\theta_j,\theta_{j+1}\in(0, \pi)$, and $\phi_j, \phi_{j+1}\in(0, 2\pi)$ such that $0<\phi_{j+1}-\phi_j<\pi$ in the spherical coordinate system. If for an $N\in\mathbb{N}$, $N\geq 3$, there holds 
\begin{equation}\notag
  u(\mathbf{0})=0,\ \ P_p^0(\cos\theta_\tau)\neq0, \tau=j\mbox{ or }j+1,\mbox{ and }\alpha\neq \frac{q}{p},
\end{equation}
where $ p=1,2,\cdots,N-1,\ q=1, 2,\cdots, p-1$ and $P_p^1$ is the Legendre polynomial, then the vanishing order of $u$ at $\mathbf 0$ generated by the intersection of the two planes $\Pi_1$ and $\Pi_2$ is at least up to order $N$. 
\end{theorem}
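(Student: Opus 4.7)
My plan is to mirror the structure of Theorem~\ref{3d vertex-thm2}, with the third generalised singular plane replaced by the non-adjacent generalised singular plane $\Pi_j$, exactly as the nodal version was handled in Theorem~\ref{3d vertex-n plane}. The two generalised singular planes $\Pi_1,\Pi_2$ contribute the same two identities \eqref{thm1-2-1} and \eqref{thm1-2-2} that were used in Theorem~\ref{2generalize}, and the hypothesis $u(\mathbf{0})=0$ immediately yields $a_0^0=0$, which plays the role that the stronger axis-vanishing condition \eqref{Thm:211 cond2} played there.

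For the plane $\Pi_j$, I would take a unit normal $\nu_j$ proportional to $\overrightarrow{OA_j}\times\overrightarrow{OA_{j+1}}$, whose Cartesian components are expressible purely through the fixed data $(\theta_j,\phi_j,\theta_{j+1},\phi_{j+1})$. Writing $\partial_{\nu_j}$ in the spherical frame $\{\hat{r},\hat{\theta},\hat{\phi}\}$ and restricting the impedance equation $(\partial_{\nu_j}u+\eta_j u)|_{\Pi_j}\equiv 0$ to the ray $\overrightarrow{OA_j}=\{(r,\theta_j,\phi_j):r>0\}$, the radial component drops out because $\nu_j\perp\overrightarrow{OA_j}$, leaving an identity of the schematic form
\[
A_j\,\partial_\theta u\big|_{(r,\theta_j,\phi_j)}+B_j\,\partial_\phi u\big|_{(r,\theta_j,\phi_j)}+C_j\,r\,u\big|_{(r,\theta_j,\phi_j)}=0,
\]
where $A_j,B_j,C_j$ depend only on the geometric data and $\eta_j$; this is the direct analogue of \eqref{thm2-2-3}. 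Plugging the spherical wave expansion \eqref{expan} into this identity turns it into a power series in $r$, whose $r^n$-coefficient is a single scalar equation in $\{a_n^m\}_{|m|\leq n}$ with a shift-by-one contribution from the $C_j ru$ piece.

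The induction then runs as in Theorems~\ref{3d vertex-thm2} and \ref{3d vertex-n plane}: assuming $a_{n-1}^m=0$ for $|m|\leq n-1$, the $r^n$-coefficients of \eqref{thm1-2-1}--\eqref{thm1-2-2}, together with the orthogonality of the associated Legendre functions (Lemma~\ref{base2}) and the non-resonance hypothesis $\alpha\neq q/p$, force $a_n^m=0$ for $m=\pm 1,\ldots,\pm n$ exactly as in Theorem~\ref{2generalize}. Substituting these vanishings into the $r^n$-coefficient of the displayed identity, the $B_j$-term carries a factor $m=0$ and dies, the $C_j$-term is eliminated by the induction hypothesis, and what remains is a nonzero constant times $a_n^0\cdot\tfrac{d}{d\theta}P_n^0(\cos\theta)|_{\theta=\theta_j}=-a_n^0P_n^1(\cos\theta_j)$, so the stated Legendre non-vanishing at $\theta_\tau$ forces $a_n^0=0$; restricting the impedance equation to $\overrightarrow{OA_{j+1}}$ instead supplies the $\tau=j+1$ alternative. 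The principal obstacle is the explicit bookkeeping for $A_j,B_j,C_j$ with $\Pi_j$ in fully general position, since unlike in Theorem~\ref{3d vertex-thm2} the rays $\overrightarrow{OA_j},\overrightarrow{OA_{j+1}}$ need not lie on $\Pi_1$ or $\Pi_2$; in particular one must verify $A_j\neq 0$, which follows from the non-degeneracy $0<\phi_{j+1}-\phi_j<\pi$ and $\theta_j,\theta_{j+1}\in(0,\pi)$ built into the hypothesis. Once $a_n^m=0$ is established for $|m|\leq n$ and $n=0,1,\ldots,N-1$, Definition~\ref{def:3} combined with the expansion \eqref{expan} delivers $\mathrm{Vani}(u;\mathbf{0},\Pi_1,\Pi_2)\geq N$.
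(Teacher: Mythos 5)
Your proposal is correct and follows essentially the same route as the paper: restrict the impedance condition on $\Pi_j$ to the ray $\overrightarrow{OA_j}$ with normal $\overrightarrow{OA_j}\times\overrightarrow{OA_{j+1}}$ (the paper's \eqref{thm5-1}--\eqref{thm5-2}, where your $A_j$ is exactly $\sin\theta_{j+1}\sin(\phi_j-\phi_{j+1})$, nonzero by the angle hypotheses), use $u(\mathbf 0)=0$ for $a_0^0=0$, run the same induction via \eqref{thm1-2-1}--\eqref{thm1-2-2} and Lemma~\ref{base2} to kill $a_n^{\pm m}$, and conclude $a_n^0=0$ from $\frac{d}{d\theta}P_n^0(\cos\theta)\big|_{\theta_j}=-P_n^1(\cos\theta_j)\neq 0$, with the $\tau=j+1$ alternative from the other ray. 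Your remark that the hypothesis should be read as $P_p^1(\cos\theta_\tau)\neq 0$ matches what the paper's proof actually uses.
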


\begin{proof}
	From Theorem \ref{3d vertex-thm2}, we know that since $\Pi_1$ and $\Pi_2$ are two generalized singular planes, then $u$ fulfils \eqref{thm1-2-1} and \eqref{thm1-2-2} accordingly. Now consider $\Pi_j$, there holds $\frac{\partial u}{\partial \nu}+\eta_ju=0$ on $\Pi_j$. Since $\Pi_j=\mbox{span}\{\overrightarrow{OA_j}, \overrightarrow{OA_{j+1}}\}$, we have $\frac{\partial u}{\partial\nu}+\eta_ju\Big|_{\overrightarrow{OA_j}}=0$ and
	\begin{equation}\notag
	\nu=\overrightarrow{OA_j}\times\overrightarrow{OA_{j+1}}=
	\left(
	\begin{array}{c}
	\sin\theta_j\sin\phi_j\cos\theta_{j+1}-\sin\theta_{j+1}\sin\phi_{j+1}\cos\theta_j\\
	-\sin\theta_j\cos\phi_j\cos\theta_{j+1}+\sin\theta_{j+1}\cos\phi_{j+1}\cos\theta_j\\
	\sin\theta_j\cos\phi_j\sin\theta_{j+1}\sin\phi_{j+1}-\sin\theta_{j+1}\cos\phi_{j+1}\sin\theta_j\sin\phi_j
	\end{array}
	\right).
	\end{equation} 
	Combining with Lemma \ref{expansion}, by direct computations, we can obtain
	\begin{align}\label{thm5-1}
	&\frac{\partial u}{\partial\nu}+\eta_ju\Big|_{\overrightarrow{OA_j}}=\frac{1}{r\sin\theta_j}\frac{\partial u}{\partial\phi}(\sin\theta_{j+1}\cos\theta_j\cos(\phi_j-\phi_{j+1})-\sin\theta_j\cos\theta_{j+1})\notag\\
	&+\frac{1}{r}\frac{\partial u}{\partial \theta}\sin\theta_{j+1}\sin(\phi_j-\phi_{j+1})+\eta_j u\Big|_{\theta=\theta_j, \phi=\phi_j}=0.
	\end{align}
	Since $\theta_j\in(0, \pi)$, multiplying $r\sin\theta_j$ on the both sides of \eqref{thm5-1}, we can deduce that 
	\begin{align}\label{thm5-2}
	&(\sin\theta_{j+1}\cos\theta_j\cos(\phi_j-\phi_{j+1})-\sin\theta_j\cos\theta_{j+1})\sum_{n=0}^{\infty}\sum_{m=-n}^n\bsi^{n+1}ma_n^mj_n(\sqrt{\lambda}r)\sqrt{\frac{2n+1}{4\pi}}\notag\\
	&\cdot\sqrt{\frac{(n-|m|)!}{(n+|m|)!}}P_n^{|m|}(\cos\theta_j)e^{\bsi m \phi_j}+\sin\theta_j\sin\theta_{j+1}\sin(\phi_j-\phi_{j+1})\sum_{n=0}^{\infty}\sum_{m=-n}^n\bsi^na_n^mj_n(\sqrt{\lambda}r)\notag\\
	&\cdot\sqrt{\frac{2n+1}{4\pi}}\sqrt{\frac{(n-|m|)!}{(n+|m|)!}}\frac{dP_n^{|m|}(\cos\theta)}{d\theta}\Big|_{\theta=\theta_j}e^{\bsi m\phi_j}+\eta_j\sin\theta_jr\sum_{n=0}^{\infty}\sum_{m=-n}^n\bsi^na_n^mj_n(\sqrt{\lambda}r)\notag\\
	&\cdot\sqrt{\frac{2n+1}{4\pi}}\sqrt{\frac{(n-|m|)!}{(n+|m|)!}}P_n^{|m|}(\cos\theta_j)e^{\bsi m\phi_j}=0.
	\end{align}
	Since $u(\mathbf{ 0})=0$, we have $a_0^0=0$. Considering the coefficients with respect to $r$ in \eqref{thm1-2-1}, \eqref{thm1-2-2} and \eqref{thm5-2}, we know that $a_1^{\pm1}$ fulfills \eqref{thm1-a1} which induces that $a_1^{\pm1}=0$ since $\alpha\in(0, 1)$. Moreover, it is easy to see from \eqref{thm5-2} that
	\begin{equation*}\label{thm5-a1}
	\sin\theta_j\sin\theta_{j+1}\sin(\phi_j-\phi_{j+1})\bsi a_1^0\frac{\sqrt{\lambda}}{3!!}\sqrt{\frac{3}{4\pi}}(-P_1^1(\cos\theta_j))=0.
	\end{equation*} 
	Since $\theta_j, \theta_{j+1}\in(0, \pi)$ and $0<\phi_j-\phi_{j+1}<\pi$, we know that if $P_1^1(\cos\theta_j)\neq0$ then $a_1^0=0$.
	
	Similarly, we assume that $a_{n-1}^m=0$, $m=0,\pm1,\pm2,\cdots,\pm(n-1)$. Then combining with Theorem \ref{3d vertex-thm2}, we know that $a_n^m$ satisfies \eqref{thm2-rn1}, \eqref{thm2-rn2} and 
	\begin{align}\label{thm5-rn}
	&(\sin\theta_{j+1}\cos\theta_j\cos(\phi_j-\phi_{j+1})-\sin\theta_j\cos\theta_{j+1})\sum_{m=-n}^n\bsi^{n+1}ma_n^m\frac{\sqrt{\lambda}^n}{(2n+1)!!}\sqrt{\frac{2n+1}{4\pi}}\notag\\
	&\cdot\sqrt{\frac{(n+|m|)!}{(n-|m|)!}}P_n^{|m|}(\cos\theta_j)e^{\bsi m \phi_j}+\sin\theta_j\sin\theta_{j+1}\sin(\phi_j-\phi_{j+1})\sum_{m=-n}^n\bsi^na_n^m\frac{\sqrt{\lambda}^n}{(2n+1)!!}\notag\\
	&\cdot\sqrt{\frac{2n+1}{4\pi}}\sqrt{\frac{(n-|m|)!}{(n+|m|)!}}\frac{dP_n^{|m|}(\cos\theta)}{d\theta}\Big|_{\theta=\theta_j}e^{\bsi m\phi_j}+\eta_j\sin\theta_j\sum_{m=-(n-1)}^{n-1}\bsi^{n-1}a_{n-1}^m\frac{\sqrt{\lambda}^{n-1}}{(2n-1)!!}\notag\\
	&\cdot\sqrt{\frac{2n-1}{4\pi}}\sqrt{\frac{(n-1-|m|)!}{(n-1+|m|)!}}P_{n-1}^{|m|}(\cos\theta_j)e^{\bsi m\phi_j}=0.
	\end{align}
	
	In \eqref{thm2-rn1} and \eqref{thm2-rn2}, utilizing the assumption $a_{n-1}^m=0$ for $m=0,\pm1,\pm2,\cdots\pm(n-1)$, we know that if $\alpha\neq\frac{k}{m}$, $k=1,2,\cdots,m$, then $a_n^m=0$, $\pm1,\pm2,\cdots,\pm n$. Hence \eqref{thm5-rn} can be simplified as 
	\begin{equation*}\label{thm5-an}
	\sin\theta_j\sin\theta_{j+1}\sin(\phi_j-\phi_{j+1})\bsi^n a_n^0\frac{\sqrt{\lambda}^n}{(2n+1)!!}\sqrt{\frac{2n+1}{4\pi}}(-P_n^1(\cos\theta_j))=0.
	\end{equation*}
	Since $\theta_j, \theta_{j+1}\in(0, \pi)$ and $0<\phi_j-\phi_{j+1}<\pi$, we can derive that if $P_n^1(\cos\theta_j\neq0)$, then $a_n^0=0$. The same results work for $\theta_{j+1}$ if we take into account $\frac{\partial u}{\partial \nu}+\eta_ju\Big|_{\overrightarrow{OA_{j+1}}}=0$.
	
	The proof is complete.
\end{proof}

\begin{remark}\label{rem:ll2}
Similar to Remark~\ref{rem:ll1}, one can have by direct verifications that the vanishing results in Theorem~\label{3d vertex-n plane2} still hold if any of the generalised singular planes involved is replaced to be a singular plane. We shall not present those results in order to avoid repetition. 
\end{remark}

\section{Irrational intersections and infinite vanishing orders}\label{sec4}

%

From the results derived in Sections \ref{sec2} and \ref{sec3}, one can identify that the vanishing order of the eigenfunction $u$ at an edge or a vertex corner point relies on the degree of the dihedral angle of the underlying corner. In the following two definitions, we first introduce the irrational and rational edge or vertex corner. Then, based on the results in Sections \ref{sec2} and \ref{sec3}, we show that the vanishing order of the eigenfunction at an irrational edge or vertex corner point is generically infinity and hence it is identically vanishing in $\Omega$.

\begin{definition}
	{\color{black}{
	Let ${\mathcal E}(\Pi_1, \Pi_2,\bsl)$ be an edge corner defined in Definition \ref{def:edge} and the corresponding dihedral angle of $\Pi_1$ and $\Pi_2$ is denoted by $\phi=\alpha \cdot \pi $, $\alpha\in(0,1)$. }}
	If $\phi$ is an irrational dihedral angle, namely, $\alpha$ is an irrational number, then ${\mathcal E}(\Pi_1, \Pi_2,\bsl)$  is said to be an {\it irrational} edge corner. Otherwise, it is said to be  a {\it rational} edge corner. For a rational edge corner ${\mathcal E}(\Pi_1, \Pi_2,\bsl)$, the dihedral angle between $\Pi_1$ and $\Pi_2$ is called the \emph{rational degree} of ${\mathcal E}(\Pi_1, \Pi_2,\bsl)$. 
\end{definition}

\begin{definition}\label{def:irration}
	Let ${\mathcal V}(\{\Pi_\ell\}_{\ell=1}^n,\bfx_0)$ be a vertex corner defined in Definition \ref{def:vertex}, where $n\in \N$ and $n\geq 3$. It is clear that, ${\mathcal V}(\{\Pi_\ell\}_{\ell=1}^n,\bfx_0)$ is composed by the following $n$ edge corners 
	$$
{\mathcal E}_\ell:=	{\mathcal E}(\Pi_\ell, \Pi_{\ell+1},\bsl_\ell), \quad {\mathcal E}_n:={\mathcal E}(\Pi_n, \Pi_{1},\bsl_n), \quad \Pi_{n+1}:=\Pi_1, \quad \ell=1,2,\ldots, n-1, 
	$$
	where $\bsl_\ell$ is the line segment of $\Pi_\ell \cap \Pi_{\ell+1}$ and $\bsl_n$ is a line segment of  $\Pi_n \cap \Pi_{1}$, respectively.  Denote
	\begin{equation}\label{eq:ir index}
	\begin{split}
			I_{\sf IR}&=\{\ell \in \N~|~1\leq \ell\leq n,\quad {\mathcal E}_\ell \mbox{ is an irrational edge corner}\},\\
			I_{\sf R}&=\{\ell \in \N~|~1\leq \ell\leq n,\quad {\mathcal E}_\ell \mbox{ is a rational edge corner}\}.  
	\end{split}
	\end{equation}
	If $\#I_{\sf IR}\geq 1$, then ${\mathcal V}(\{\Pi_\ell\}_{\ell=1}^n,\bfx_0)$ is said to be an {\it irrational } vertex corner. If $\#I_{\sf IR}\equiv 0$, then ${\mathcal V}(\{\Pi_\ell\}_{\ell=1}^n,\bfx_0)$ is said to be a {\it rational } vertex corner. For a { rational } vertex corner ${\mathcal V}(\{\Pi_\ell\}_{\ell=1}^n,\bfx_0)$ composed by edge corners ${\mathcal E}_\ell:={\mathcal E}(\Pi_\ell, \Pi_{\ell+1},\bsl_\ell)$, the {\color{black}{largest}} degree of ${\mathcal E}_\ell\, (\ell=1,\ldots, n)$ is referred to as the \emph{rational degree} of ${\mathcal V}(\{\Pi_\ell\}_{\ell=1}^n,\bfx_0)$ . 
\end{definition}



When an irrational edge corner  ${\mathcal E}(\Pi_1, \Pi_2,\bsl) $ is intersected by two nodal planes of $u$, from Theorem \ref{soundsoft2}, we have

\begin{theorem}\label{ir-2nodal}
	Let $u$ be a Laplacian eigenfunction to \eqref{eq:eig}. Suppose that ${\mathcal E}(\Pi_1, \Pi_2,\bsl) \Subset \Omega$ is an irrational edge corner with  $\Pi_1, \Pi_2\in\mathcal{N}_\Omega^\lambda$. 
%
	Then there holds
	\begin{equation*}\label{result1}
	\mathrm{Vani}(u; {\mathbf 0},\Pi_1, \Pi_2)=+\infty,\quad {\mathbf 0}\in \bsl. 
	\end{equation*}  
\end{theorem}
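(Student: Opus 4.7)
The plan is to deduce this result as a direct consequence of Theorem \ref{soundsoft2}. That theorem asserts that if ${\mathcal E}(\Pi_1,\Pi_2,\bsl)\Subset\Omega$ is an edge corner with $\Pi_1,\Pi_2\in\mathcal{N}_\Omega^\lambda$ and $\angle(\Pi_1,\Pi_2)=\alpha\cdot\pi$, and if for some integer $N\geq 3$ one has
\begin{equation*}
\alpha\neq \frac{q}{p},\quad p=1,2,\ldots,N-1,\ q=1,2,\ldots,p-1,
\end{equation*}
then $\mathrm{Vani}(u;\mathbf 0,\Pi_1,\Pi_2)\geq N$. Hence my job is simply to observe that the irrationality hypothesis makes this excluded-rationals condition trivially satisfied for every $N$ simultaneously.

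More precisely, I would first fix an arbitrary $N\in\mathbb{N}$ with $N\geq 3$. By the definition of an irrational edge corner, $\alpha\in(0,1)$ is irrational, so in particular $\alpha$ is distinct from every rational number of the form $q/p$ with $p,q\in\mathbb{N}$; in particular the finite exclusion list in the hypothesis of Theorem \ref{soundsoft2} is automatically satisfied. Applying that theorem then gives
\begin{equation*}
\mathrm{Vani}(u;\mathbf 0,\Pi_1,\Pi_2)\geq N.
\end{equation*}
Since $N\geq 3$ was arbitrary, we conclude
\begin{equation*}
\mathrm{Vani}(u;\mathbf 0,\Pi_1,\Pi_2)=+\infty,
\end{equation*}
which is precisely the claimed identity for every edge-corner point $\mathbf 0\in\bsl$ (the argument is translation invariant along $\bsl$).

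There is no real obstacle in this proof: the entire content has been absorbed into Theorem \ref{soundsoft2}, and the statement here is essentially a corollary obtained by taking the supremum of the vanishing-order bounds over $N$. The only point worth emphasising in the write-up is that the excluded set in Theorem \ref{soundsoft2} is a countable collection of rational numbers, so irrationality of $\alpha$ rules out membership in \emph{all} such sets at once, uniformly in $N$; this is what permits passage from a statement ``at least $N$'' to the limit statement ``$+\infty$''. By the strong unique continuation principle, as remarked after Definition \ref{def:3}, this also forces $u\equiv 0$ in $\Omega$, a fact that will be exploited in the inverse scattering applications in Section \ref{sec5}.
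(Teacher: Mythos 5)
Your argument is correct and coincides with the paper's own justification: the paper simply remarks that Theorem~\ref{soundsoft2} supplies the detailed spherical-wave-expansion analysis, and since an irrational $\alpha$ avoids every excluded rational $q/p$ for all $N\geq 3$ simultaneously, the bound $\mathrm{Vani}(u;\mathbf 0,\Pi_1,\Pi_2)\geq N$ holds for every $N$ and hence the vanishing order is infinite. (The paper additionally notes an alternative proof via the reflection principle as in \cite{CDL2}, but omits it, so your route is essentially the intended one.)
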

\begin{remark}
	Apart from Theorem \ref{soundsoft2} which gives the detailed analysis for this theorem with spherical wave expansion argument, we can prove Theorem \ref{ir-2nodal}  by reflection principle which is very similar to \cite[Theorem 2.1]{CDL2}. The detailed proof is omitted.

\end{remark}

If the intersecting two planes of the irrational 3D edge corner are either one of the three types: nodal plane, singular plane or generalized singular plane, namely for the general case, we have the irrational intersection results as follows.

\begin{theorem}\label{ir-mix1}
	Let $u$ be a Laplacian eigenfunction to \eqref{eq:eig}. Suppose that ${\mathcal E}(\Pi_1, \Pi_2,\bsl) \Subset \Omega$ is an irrational 3D edge corner with $\Pi_1\in\mathcal{N}_\Omega^\lambda$ and $\Pi_2\in\mathcal{M}_\Omega ^\lambda$ .  Assume that $\eta\equiv C$ on $\Pi_2$ is a constant, then there holds 
    \begin{equation*}\label{result2}
   \mathrm{Vani}(u; {\mathbf 0},\Pi_1, \Pi_2)=+\infty,\quad {\mathbf 0}\in \bsl. 
    \end{equation*}
\end{theorem}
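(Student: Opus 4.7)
The plan is to derive Theorem \ref{ir-mix1} as an immediate corollary of Theorem \ref{gene-nodal} by driving the order parameter $N$ to infinity. The quantitative spectral result in Section \ref{sec2} already encodes the complete relationship between the rationality of the dihedral angle and the vanishing order at the edge-corner point, so the irrational case should require no additional machinery beyond a limiting argument.

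First, I would unfold the hypothesis. Since ${\mathcal E}(\Pi_1,\Pi_2,\bsl)$ is an irrational edge corner, Definition \ref{def:irration} tells us that the associated $\alpha \in (0,1)$ with $\phi = \alpha \cdot \pi$ is an irrational number. Consequently, for every pair $(p,q) \in \mathbb{N}^2$ with $1 \leq q \leq p-1$, the rational number $\frac{2q+1}{2p}$ cannot coincide with $\alpha$. Therefore, for any $N \in \mathbb{N}$ with $N \geq 2$, the exclusion condition
\begin{equation*}
\alpha \neq \frac{2q+1}{2p},\quad p = 1, 2, \ldots, N-1,\ q = 1, 2, \ldots, p-1,
\end{equation*}
required by Theorem \ref{gene-nodal} is automatically satisfied.

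Second, I would apply Theorem \ref{gene-nodal} for each such $N$. Since $\Pi_1 \in \mathcal{N}^\lambda_\Omega$, $\Pi_2 \in \mathcal{M}^\lambda_\Omega$ with the constant boundary parameter $\eta \equiv C$ (which is consistent with Definition \ref{def:1}), the hypotheses of Theorem \ref{gene-nodal} hold verbatim, and we obtain $\mathrm{Vani}(u; \mathbf{0}, \Pi_1, \Pi_2) \geq N$. As $N$ may be chosen arbitrarily large, the vanishing order cannot be finite, and we conclude $\mathrm{Vani}(u; \mathbf{0}, \Pi_1, \Pi_2) = +\infty$.

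There is essentially no genuine obstacle here; the statement is a purely limiting consequence of the quantitative result in Section \ref{sec2}, analogous to how Theorem \ref{ir-2nodal} follows from Theorem \ref{soundsoft2}. The only point worth a brief verification is that the constancy of $\eta$ on $\Pi_2$ is not an extra restriction but is intrinsic to the notion of a generalised singular plane as fixed in Definition \ref{def:1}. As an alternative route, one could mimic the reflection argument sketched after Theorem \ref{ir-2nodal}: reflect $u$ across the nodal plane $\Pi_1$ to an odd extension $\tilde u$ which remains a Laplacian eigenfunction, and then exploit the impedance condition on $\Pi_2$ together with its reflected copy to generate a dense family of generalised singular planes emanating from $\bsl$ with angles that are integer multiples of $\alpha \cdot \pi \mod \pi$. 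The irrationality of $\alpha$ makes this family equidistributed, forcing infinite vanishing by analytic continuation; but this route is considerably more elaborate than the direct application of Theorem \ref{gene-nodal} outlined above.
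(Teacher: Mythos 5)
Your proposal is correct and follows essentially the same route as the paper, which treats Theorem~\ref{ir-mix1} precisely as a direct consequence of the quantitative result in Theorem~\ref{gene-nodal}: since an irrational $\alpha$ can never equal any rational value $\frac{2q+1}{2p}$, the hypotheses of Theorem~\ref{gene-nodal} hold for every $N\geq 2$, and letting $N\to\infty$ gives $\mathrm{Vani}(u;\mathbf{0},\Pi_1,\Pi_2)=+\infty$. Your observation that the constancy of $\eta$ is already built into Definition~\ref{def:1} is also consistent with the paper's setup, so no further argument is needed.
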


The same result can be derived for the case $\eta\equiv0$, which indicates that $\Pi_2$ is a singular plane. The detailed discussion can be found in Theorem \ref{gene-nodal}.

The next theorem is concerned with the intersection of two generalized singular planes, which is a direct corollary of Theorem \ref{2generalize}. 

\begin{theorem}\label{ir-2gene}
	Let $u$ be a Laplacian eigenfunction to \eqref{eq:eig}. Suppose that ${\mathcal E}(\Pi_1, \Pi_2,\bsl) \Subset \Omega$ is an irrational edge corner with $\Pi_\ell\in\mathcal{M}_\Omega ^\lambda\, (\ell=1,2)$ . If there exits a sufficiently small $\varepsilon\in\mathbb{R}_+$ such that
	\begin{equation}\label{lem:27 cond}
		u|_{ B_\varepsilon(\mathbf{0}) \cap \bsl}\equiv0,
	\end{equation}
	 then there holds
	\begin{equation*}\label{result3}
	\mathrm{Vani}(u; {\mathbf 0},\Pi_1, \Pi_2)=+\infty,\quad {\mathbf 0}\in \bsl. 
	\end{equation*}
\end{theorem}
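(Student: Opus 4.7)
The plan is to deduce this theorem as an immediate corollary of Theorem \ref{2generalize} by letting the vanishing-order parameter $N$ there be arbitrary. First, I would observe that by the assumption that the edge corner ${\mathcal E}(\Pi_1,\Pi_2,\bsl)$ is irrational, the parameter $\alpha$ with $\phi = \alpha\pi$ is irrational. Consequently, $\alpha$ cannot equal any rational number $q/p$ with $p,q\in\mathbb{N}$, so the exclusion condition $\alpha \neq q/p$ for $p=1,\ldots,N-1$ and $q=1,\ldots,p-1$ in the hypothesis of Theorem \ref{2generalize} is vacuously satisfied for every integer $N\geq 3$.

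Next, I would combine this observation with the standing assumption \eqref{lem:27 cond}, which is precisely the hypothesis \eqref{Thm:211 cond2} needed to invoke Theorem \ref{2generalize}. Applying Theorem \ref{2generalize} therefore yields $\mathrm{Vani}(u;\mathbf{0},\Pi_1,\Pi_2) \geq N$ for every $N \geq 3$. Since $N$ may be taken arbitrarily large, the vanishing order at $\mathbf{0}$ must be infinite, which is the assertion of the theorem.

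There is essentially no genuine obstacle to overcome, because the analytic core of the argument---the spherical wave expansion of $u$, the vanishing of the axial coefficients $a_n^0$ furnished by Lemma \ref{an0} under the condition that $u$ vanishes on a neighborhood of $\mathbf{0}$ in $\bsl$, and the non-vanishing of the $2\times 2$ determinant $2\mathrm{i}\sin(m\alpha\pi)$ for $m\neq 0$ whenever $\alpha$ is irrational---has already been carried out in the proof of Theorem \ref{2generalize}. The only new input is the elementary observation that irrationality of $\alpha$ simultaneously rules out every rational exclusion, which is precisely what promotes the finite vanishing result to an infinite one. If desired, one could then additionally invoke the strong UCP remark following Definition \ref{def:3} to conclude that $u\equiv 0$ in $\Omega$, though this stronger statement is not required by the theorem as stated.
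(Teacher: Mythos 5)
Your proposal is correct and matches the paper's own treatment: the paper states this result precisely as a direct corollary of Theorem \ref{2generalize}, noting that irrationality of $\alpha$ makes the rational exclusion condition hold for every $N$, so the vanishing order exceeds any finite bound. Nothing further is needed.
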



If $\eta_1=0$ or $\eta_2=0$, which indicates that either $\Pi_1$ or $\Pi_2$ becomes a singular plane, we can deduce the same vanishing property as Theorem \ref{ir-2gene}. Moreover, if $\eta_1=\eta_2=0$, for the intersection of two singular planes, we can further obtain the explicit form of $u$ as follow.
 
\begin{theorem}\label{ir-2sing}
	Let $u$ be a Laplacian eigenfunction to \eqref{eq:eig}.  Suppose that ${\mathcal E}(\Pi_1, \Pi_2,\bsl) \Subset \Omega$ is an irrational edge corner and  $\Pi_\ell\in\mathcal{S}_\Omega ^\lambda\, (\ell=1,2)$ . If \eqref{lem:27 cond} is satisfied, then there holds
	\begin{equation}\label{result4}
	\mathrm{Vani}(u; {\mathbf 0}, \Pi_1, \Pi_2)=+\infty,\quad  {\mathbf 0} \in \bsl . 	\end{equation}
	Moreover, if \eqref{lem:27 cond} fails to be fulfilled, then we have the following expansion of $u$ in a neighborhood of the edge-corner point $\mathbf{ 0}$ in the polar coordinate system:
	\begin{equation}\label{result5}
	u(\mathbf{ x})=4\pi\sum_{n=0}^{\infty}\bsi^na_n^0j_n(\sqrt{\lambda}r)Y_n^0(\theta, \phi), \quad 
	\end{equation}
	where $Y_n^0(\theta, \phi)$ is the spherical harmonics and $j_n(t)$ is the $n$-th Bessel function.
\end{theorem}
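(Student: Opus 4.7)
The first assertion \eqref{result4} is essentially a direct corollary of Theorem~\ref{ir-2gene} specialised to $\eta_1=\eta_2=0$. The plan is therefore to observe that the hypotheses of Theorem~\ref{2generalize} are met for \emph{every} integer $N\geq 3$, because the rationality exclusion $\alpha\neq q/p$ (for $1\leq p\leq N-1$, $1\leq q\leq p-1$) is automatically satisfied once $\alpha$ is irrational. Combined with the additional condition $u|_{B_\varepsilon(\mathbf 0)\cap\bsl}\equiv 0$, this forces $\mathrm{Vani}(u;\mathbf 0,\Pi_1,\Pi_2)\geq N$ for each $N$, giving \eqref{result4}. No new argument is needed here.

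For the second and more substantive assertion \eqref{result5}, the plan is to revisit the proof of Theorem~\ref{2generalize} but without invoking Lemma~\ref{an0}. Writing
\[
u(\mathbf{x})=4\pi\sum_{n=0}^{\infty}\sum_{m=-n}^{n}\bsi^n a_n^m\, j_n(\sqrt{\lambda}r)\, Y_n^m(\theta,\phi),
\]
and imposing the two singular (Neumann) conditions $\partial_\nu u|_{\Pi_\ell}=0$ at $\phi=0$ and $\phi=\alpha\pi$ via Proposition~\ref{neumann bd}, the differentiation with respect to $\phi$ inserts a factor $\bsi m$. Therefore the $m=0$ terms are annihilated identically and, unlike in Theorems~\ref{2generalize}--\ref{gene-nodal}, the $a_n^0$ are left unconstrained. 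Comparing coefficients of $r^n$ and invoking the orthogonality relation of Lemma~\ref{base2} for each fixed $n$ and each $|m|\geq 1$ yields the pair
\[
a_n^m-a_n^{-m}=0,\qquad a_n^m\, e^{\bsi m\alpha\pi}-a_n^{-m}\, e^{-\bsi m\alpha\pi}=0.
\]
Substituting the first into the second produces $2\bsi\, a_n^m\sin(m\alpha\pi)=0$, and since $\alpha$ is irrational one has $\sin(m\alpha\pi)\neq 0$ for every $m\in\mathbb{Z}\setminus\{0\}$. Hence $a_n^m=0$ for all $|m|\geq 1$ and all $n\geq 0$, which collapses the spherical wave expansion to the claimed form \eqref{result5}.

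The step I expect to be the most delicate is the decoupling argument: in the singular (rather than generalised singular) setting, the absence of the zero-th order term in the boundary condition is precisely what makes the linear system at each radial order purely algebraic in the $m\neq 0$ coefficients, and uncoupled from the $m=0$ coefficients. I would make this point explicit in the write-up so that it is clear why the proof proceeds without the line-vanishing hypothesis \eqref{lem:27 cond}, and why the axially symmetric modes $Y_n^0$ are the only ones that can survive. Once this structural observation is in place, the remainder is just a careful book-keeping of coefficients, analogous to the induction carried out in the proof of Theorem~\ref{2generalize}, but performed only for $|m|\geq 1$.
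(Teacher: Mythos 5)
Your proposal is correct and follows essentially the same route as the paper's proof: the first assertion is obtained from Theorem~\ref{2generalize} (with the impedance parameters set to zero, cf. Remark~\ref{rem:gg1}) since irrationality of $\alpha$ removes the rationality exclusions for every $N$, and the second assertion is proved by rerunning the coefficient comparison without Lemma~\ref{an0}, observing that the factor $m$ from $\partial_\phi$ annihilates the $m=0$ modes and that $\sin(m\alpha\pi)\neq 0$ for all $m\neq 0$ forces $a_n^m=0$ for $|m|\geq 1$, leaving exactly the expansion \eqref{result5}. Your explicit remark on why the $m=0$ coefficients decouple in the purely singular case, together with the acknowledged induction bookkeeping, matches the paper's argument.
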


{\color{black}{
\begin{proof}
	By Theorem~\ref{2generalize} and Remark~\ref{rem:gg1}, it is easy to verify that \eqref{result4} holds under the generic condition \eqref{lem:27 cond}.
	However, if \eqref{lem:27 cond} fails to be fulfilled, then we can not derive $a_n^0=0$ for $n=0,1,2,\cdots$, therein. 
	
	Since $\frac{\partial u}{\partial \nu}\Big|_{\Pi_\ell}\equiv0$, $\ell=1,2$, by direct computation, we can obtain
	\begin{equation}\label{sing1}
	-\sum_{n=0}^{\infty}\sum_{m=-n}^{n}\bsi^{n+1}ma_n^mj_n(\sqrt{\lambda}r)\sqrt{\frac{2n+1}{4\pi}}\sqrt{\frac{(n-|m|)!}{(n+|m|)!}}P_n^{|m|}(\cos\theta)=0,
	\end{equation}
	and 
	\begin{equation}\label{sing2}
	\sum_{n=0}^{\infty}\sum_{m=-n}^{n}\bsi^{n+1}ma_n^mj_n(\sqrt{\lambda}r)\sqrt{\frac{2n+1}{4\pi}}\sqrt{\frac{(n-|m|)!}{(n+|m|)!}}P_n^{|m|}(\cos\theta)e^{\bsi m\alpha\cdot\pi}=0,
	\end{equation}
	on $\Pi_{1}$ and $\Pi_2$ respectively. By comparing the coefficient of $r$ in \eqref{sing1} and \eqref{sing2}, with the help of the orthogonality condition, we can still obtain that $a_1^{\pm1}=0$ since $\alpha\in(0,1)$ for the dihedral angle $\phi=\alpha\cdot\pi$. By induction, following a same argument to the proof of Theorem \ref{2generalize}, we can deduce that
	\begin{equation}\label{impe1-11}
	-\sum_{m=-n}^{n}\bsi^{n+1}ma_n^m\frac{\sqrt{\lambda}^n}{(2n+1)!!}\sqrt{\frac{2n+1}{4\pi}}\sqrt{\frac{(n-|m|)!}{(n+|m|)!}}P_n^{|m|}(\cos\theta)=0,
	\end{equation}
	and 
	\begin{equation}\label{impe2-11}
	\sum_{m=-n}^{n}\bsi^{n+1}ma_n^m\frac{\sqrt{\lambda}^n}{(2n+1)!!}\sqrt{\frac{2n+1}{4\pi}}\sqrt{\frac{(n-|m|)!}{(n+|m|)!}}P_n^{|m|}(\cos\theta)e^{\bsi m\alpha\cdot\pi}=0.
	\end{equation}
	Therefore, by Lemma \ref{base2}, we know that there holds $a_n^m=0$ ($m=\pm1,\pm2,\cdots,\pm n$) since the corresponding dihedral angle is irrational. Hence, we are able to obtain the explicit expression \eqref{result5} around the edge corner point $\mathbf{ 0}$. 
\end{proof}
}}

Based on the irrational intersection at an edge corner by two planes, we next consider the corresponding properties at a vertex corner which is intersected by $n$ planes where $n\geq3$.

%

Using Theorem \ref{3d vertex-n plane}, Theorem \ref{3d vertex-n plane2} and Remark \ref{rem:ll2}, for an irrational vertex corner, we have 


\begin{theorem}\label{ir-vertex}
	Let $u$ be a Laplacian eigenfunction to \eqref{eq:eig}. Consider an irrational vertex corner ${\mathcal V}(\{\Pi_\ell\}_{\ell=1}^n,{\mathbf 0})\Subset \Omega$,
	{\color{black}{where the intersecting $n$ planes $\Pi_1, \Pi_2, \cdots$, $\Pi_n$}} could be either one of the three types: nodal plane, singular plane or generalized singular plane. Assume that for $i=1,2,\cdots,n$, $\Pi_i=\mathrm{span}\{\overrightarrow{OA_i},\overrightarrow{OA_{i+1}}\}$, where $\overrightarrow{OA_i}= (r,\theta_i, \phi_i)$, $\overrightarrow{OA_{i+1}}=(r,\theta_{i+1}, \phi_{i+1})$ for $r>0$, $\theta_i,\theta_{i+1}\in(0,\pi)$ and $\phi_i, \phi_{i+1}\in(0,2\pi)$ such that {\color{black}{$0<\phi_{i+1}-\phi_i<\pi$}} in the spherical coordinate system. Particularly when $i=n$, we denote $\Pi_{n+1}:=\Pi_1$. Recall that $I_{\sf R} $ and $I_{\sf IR}$ are defined in \eqref{eq:ir index}. If one of the following conditions is fulfilled
\begin{enumerate}
	\item there exists an index $\ell_0 \in I_{\sf IR}$ such that $\Pi_{\ell_0} \in {\mathcal N}_\Omega^\lambda$ or $\Pi_{\ell_0+1} \in {\mathcal N}_\Omega^\lambda$;
	
	\item 
	{\color{black}{
	for any $\ell \in I_{\sf IR}$, if $\Pi_\ell, \Pi_{\ell+1}\in \{{\mathcal S}_\Omega^\lambda\cup{\mathcal M}_\Omega^\lambda \}$, $u({\mathbf 0})=0$ and for a fixed $\ell_0 \in I_{\sf IR}$ there exits an index $j\in \{ 1,\ldots, n\}$ }}such that the corresponding  plane $\Pi_j=\mathrm{span}\{\overrightarrow{OA_j}, \overrightarrow{OA_{j+1}}\}$ satisfies  $P_p^0(\cos\theta_\tau)\neq 0$ and $ P_p^1(\cos\theta_\tau)\neq 0$, for $\tau=j, j+1$, $p=1,2,\cdots, n-1$, where $P_p^0$ and $P_p^1$ are the associated Legendre polynomials, $n\in\mathbb{N}$, $n\geq3$;
\end{enumerate}
then  there holds 
$$
\mathrm{Vani}(u;\mathbf{ 0})=+\infty.
$$ 	
\end{theorem}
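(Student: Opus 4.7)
The plan is to observe that by hypothesis there exists an irrational edge corner $\mathcal{E}_{\ell_0}\subset \mathcal{V}(\{\Pi_\ell\}_{\ell=1}^n,\mathbf{0})$ for some $\ell_0\in I_{\sf IR}$, and to show that $u$ vanishes to arbitrarily high order at $\mathbf{0}$ by applying an edge- or vertex-corner vanishing theorem from Sections \ref{sec2} and \ref{sec3} to $\mathcal{E}_{\ell_0}$. Since the dihedral angle of $\mathcal{E}_{\ell_0}$ is $\alpha_{\ell_0}\cdot\pi$ with $\alpha_{\ell_0}$ irrational, every exclusion condition of the form $\alpha\neq q/p$ or $\alpha\neq (2q+1)/(2p)$ appearing in those theorems is automatically satisfied for all admissible $p,q$. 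Hence the lower bound on the vanishing order at the edge-corner point $\mathbf{0}$ can be taken arbitrarily large, which is to say infinite; Definition \ref{def:4} then upgrades this edge-corner vanishing order to the vertex-corner vanishing order at $\mathbf{0}$.

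Under condition (1), I would fix $\ell_0\in I_{\sf IR}$ such that, without loss of generality, $\Pi_{\ell_0}\in \mathcal{N}_\Omega^\lambda$, and after a rigid motion align $\bsl_{\ell_0}$ with the $x_3$-axis and $\Pi_{\ell_0}$ with the $(x_1,x_2)$-plane. According to whether $\Pi_{\ell_0+1}$ is nodal, singular, or generalised singular, I would invoke Theorem \ref{soundsoft2}, Corollary \ref{sing-nodal}, or Theorem \ref{gene-nodal} respectively. Note that the supplementary hypothesis $u|_{B_\varepsilon(\mathbf{0})\cap \bsl_{\ell_0}}\equiv 0$ used in the proofs of those theorems is automatic from $u|_{\Pi_{\ell_0}}\equiv 0$. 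Each theorem then yields $\mathrm{Vani}(u;\mathbf{0},\Pi_{\ell_0},\Pi_{\ell_0+1})\geq N$ for every admissible $N\in\mathbb{N}$, and irrationality of $\alpha_{\ell_0}$ makes every such $N$ admissible. Consequently $\mathrm{Vani}(u;\mathbf{0},\Pi_{\ell_0},\Pi_{\ell_0+1})=+\infty$, whence $\mathrm{Vani}(u;\mathbf{0})=+\infty$ by Definition \ref{def:4}.

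Under condition (2), every irrational edge corner has both bounding planes in $\mathcal{S}_\Omega^\lambda\cup \mathcal{M}_\Omega^\lambda$, we are given $u(\mathbf{0})=0$, and a specified auxiliary plane $\Pi_j=\mathrm{span}\{\overrightarrow{OA_j},\overrightarrow{OA_{j+1}}\}$ satisfies $P_p^0(\cos\theta_\tau)\neq 0$ and $P_p^1(\cos\theta_\tau)\neq 0$ for $\tau=j,j+1$ and all relevant $p$. I would fix the given $\ell_0\in I_{\sf IR}$, place $\bsl_{\ell_0}$ on the $x_3$-axis, and treat $\Pi_{\ell_0},\Pi_{\ell_0+1}$ as the two (generalised) singular planes playing the role of $\Pi_1,\Pi_2$ in Theorem \ref{3d vertex-n plane} (when $\Pi_j\in \mathcal{N}_\Omega^\lambda$) or Theorem \ref{3d vertex-n plane22} (when $\Pi_j\in \mathcal{M}_\Omega^\lambda$); Remark \ref{rem:ll2} handles the subcases in which some of these planes are in fact singular. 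Irrationality of $\alpha_{\ell_0}$ again vacates every rational-exclusion hypothesis on $\alpha$, while the Legendre non-vanishing and $u(\mathbf{0})=0$ hypotheses are given. These theorems therefore deliver $\mathrm{Vani}(u;\mathbf{0},\Pi_{\ell_0},\Pi_{\ell_0+1})\geq N$ for every $N\in\mathbb{N}$, and Definition \ref{def:4} completes the argument.

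I do not anticipate a substantive obstacle: the proof is a bookkeeping argument that routes each plane-type configuration at the chosen irrational edge corner $\mathcal{E}_{\ell_0}$ to the appropriate theorem of Sections \ref{sec2} and \ref{sec3}, exploiting the fact that irrationality of $\alpha_{\ell_0}$ simultaneously invalidates every rational exclusion that those theorems impose on $\alpha$. The only point requiring mild care is to check, in each subcase, that the auxiliary hypothesis needed by the invoked theorem (either $u|_{B_\varepsilon(\mathbf{0})\cap \bsl_{\ell_0}}\equiv 0$ in case (1), or $u(\mathbf{0})=0$ together with the Legendre conditions in case (2)) is indeed supplied by the hypotheses of the present theorem.
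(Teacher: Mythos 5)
Your proposal is correct and follows essentially the same route as the paper, which states Theorem \ref{ir-vertex} as a direct consequence of the Section \ref{sec2} edge-corner theorems (for condition (1), using that a nodal plane forces $u$ to vanish on the edge segment) and the Section \ref{sec3} vertex-corner theorems together with Remark \ref{rem:ll2} (for condition (2)), with irrationality of the dihedral angle vacating every rational-exclusion condition so that the vanishing order exceeds any $N$, and Definition \ref{def:4} transferring this to $\mathrm{Vani}(u;\mathbf{0})$. The only cosmetic point is that when the auxiliary plane $\Pi_j$ in condition (2) is adjacent to $\Pi_{\ell_0}$ or $\Pi_{\ell_0+1}$ one should cite Theorems \ref{3d vertex-thm1} and \ref{3d vertex-thm2} rather than Theorems \ref{3d vertex-n plane} and \ref{3d vertex-n plane22}, but the argument is identical and the paper itself is no more explicit on this.
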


\section{Unique identifiability for inverse obstacle problem}\label{sec5}

In this section, for practical use, we apply the vanishing properties of the eigenfunction $u$ at a vertex-corner point to study the unique identifiability for the inverse obstacle problem which is concerned with recovering the shape of some unknown objects by certain wave probing data. The inverse obstacle problem arises from many applications such as radar, sonar and geophysical exploration.

Let $\Omega\subset\mathbb{R}^3$ be a bounded Lipschitz domain such that $\mathbb{R}^3\backslash\bar{\Omega}$ is connected. Let $u^i$ be an incident field and in the subsequent discussions of this section, $u^i$ is assumed to be a plane wave of the form 
\begin{equation*}\label{ui}
u^i:=u^i(\mathbf{ x};k,\mathbf{d})=e^{\bsi k\mathbf{ x}\cdot\mathbf{d}},\quad x\in\mathbb{R}^3,
\end{equation*}
where $k\in\mathbb{R}_+$ signifies the wavenumber and $\mathbf{d}\in\mathbb{S}^2$ denotes the incident direction. Physically speaking, $u^i$ is the detecting wave field and $\Omega$ denotes an impenetrable obstacle which interrupts the propagation of the incident wave and generates the corresponding scattered wave field $u^s$. Define  $u:=u^i+u^s$ to be the total wave field, then the forward scattering problem of this process can be described by the following system, 
\begin{equation}\label{forward}
\begin{cases}
& \Delta u + k^2 u = 0\qquad\quad \mbox{in }\ \ \mathbb{R}^3\backslash\overline{\Omega},\medskip\\
& u =u^i+u^s\hspace*{1.56cm}\mbox{in }\ \ \mathbb{R}^3,\medskip\\
& \mathscr{B}(u)=0\hspace*{1.95cm}\mbox{on}\ \ \partial\Omega,\medskip\\
&\displaystyle{ \lim_{r\rightarrow\infty}r^{\frac{1}{2}}\left(\frac{\partial u^{s}}{\partial r}-\mathrm{i}ku^{s}\right) =\,0,}
\end{cases}
\end{equation}
where the last equation is the Sommerfeld radiation condition that holds uniformly in $\hat{\mathbf{ x}}:=\mathbf{ x}/|\mathbf{ x}|\in\mathbb{S}^2$. If $\mathscr{B}(u):=u$, the boundary condition is of Dirichlet type and $\Omega$ is said to be a sound-soft obstacle. If $\mathscr{B}(u):=\partial_\nu u$, the boundary condition is of Neumann type and $\Omega$ is said to be a sound-hard obstacle. If $\mathscr{B}(u):=\partial_\nu u+\eta u$, $\Omega$ becomes an impedance obstacle with Robin type boundary condition where $\nu$ denotes the exterior unit normal vector to $\partial\Omega$ and $\eta\in L^\infty(\partial\Omega)$ signifies the corresponding impedance boundary parameter. For unification of the notation, we represent all these three types of boundary conditions with
\begin{equation}\label{bound}
\mathscr{B}(u):=\partial_\nu u+\eta u=0 \quad\mbox{on } \partial\Omega,
\end{equation}
where $\eta=\infty$ and $\eta=0$ respectively stands for the Dirichlet and Neumann boundary condition.

The forward scattering problem \eqref{forward} has been studied in \cite{CK, Mclean} and there exists a unique solution $u\in H^1_{loc}(\mathbb{R}^3\backslash\overline{\Omega})$ fulfilling the following expansion:
\begin{equation}\label{eq:far}
u^s(\mathbf x,\mathbf d,k)
=\frac{e^{\mathrm{i}kr}}{r^{{1/2}}} u_{\infty}(\hat{\mathbf x};k,\mathbf{d})+\mathcal{O}\left(\frac{1}{r^{3/2}}\right)\quad\mbox{as }\,r\rightarrow\infty,
\end{equation}
where $u_\infty$ is known as the associated far-field pattern or the scattering amplitude. The asymptotic form \eqref{eq:far} holds uniformly with respect to all directions $\hat {\mathbf x}:=\mathbf  x/|\mathbf  x|\in \mathbb{S}^{2}$. 

The inverse obstacle scattering problem corresponding to \eqref{forward} is to recover $\Omega$ (and $\eta$ as well in the impedance case) by knowledge of the far-field pattern $u_\infty(\hat{\mathbf{ x}},\mathbf{d},k)$. By introducing an operator $\mathcal{F}$ which sends the obstacle to the corresponding far-field pattern, defined by the forward scattering system \eqref{forward}, the aforementioned inverse problem can be formulated as
\begin{equation}\label{inverse}
\mathcal{F}(\Omega, \eta)=u_\infty(\hat{\mathbf{ x}},\mathbf{d},k).
\end{equation} 
It can directly verified that the inverse problem \eqref{inverse} is nonlinear. The problem is also known as the Schiffer problem in the inverse scattering theory, which has a long and colorful history since 1960 by M. Schiffer's pioneering work \cite{LP}. It constitutes an open problem whether one can establish the one-to-one correspondence for \eqref{inverse} by a single far-field pattern, namely $\hat{\mathbf{x}}$ while $k$ and $\mathbf{d}$ are fixed. We refer to a recent survey paper \cite{CK18} by Colton and Kress for more account on the historical developments of this problem. 

Recent progress on the Schiffer problem is made on general polyhedral obstacles in $\mathbb{R}^n$, $n\geq 2$. Uniqueness and stability results by using a finite number of far-field patterns can be found in \cite{AR,CY,LPRX,LRX,Liu-Zou,Liu-Zou3}. Particularly, in \cite{Liu-Zou3}, the unique determination for impedance-type obstacles was studied for partial solution to this fundamental problem. In \cite{CDL2}, we have developed a completely new method that is applicable for sound-soft, sound-hard and also impedance type obstacles to provide a solution to the inverse obstacle problem in two-dimensional space. We also showed that in a rather general scenario one can determine the convex hull of an impedance obstacle as well as its boundary parameter by at most two far-field patterns by utilizing this new approach. In this section, we are concerned with the recovery of the obstacle and its surface impedance in $\mathbb{R}^3$. Similar to the two-dimensional case, the method developed here is also completely local. Next, we give some basic definitions for the inverse obstacle problem in $\mathbb{R}^3$.

\begin{definition}\label{ad obstacle}
	Let $\Omega\subset\mathbb{R}^3$ be an open polyhedron associated with the generalized impedance boundary condition \eqref{bound}. Then $\Omega$ is said to be an \emph{admissible polyhedral obstacle} if the following conditions are fulfilled:
	\begin{itemize}
		\item On the each face of $\partial\Omega$, the surface impedance $\eta$ is either a constant (possibly zero) or $\infty$.
		
		\item For any vertex of $\Omega$ which is intersected by $n$ planes: $\Pi_1, \Pi_2,\cdots,\Pi_n$, $n\geq3$, there exists a plane $\Pi_j:=\{\overrightarrow{OA_j}, \overrightarrow{OA_{j+1}}\}$, where $O$ denotes the vertex centering at the origin, $\overrightarrow{OA_\tau}= (r,\theta_\tau, \phi_\tau)$, for $r>0$, $\theta_\tau\in(0,\pi)$ and $\phi_\tau\in(0,2\pi)$ in spherical coordinate system such that $P_n^0(\cos\theta_\tau)\neq0$ and $P_n^1(\cos\theta_\tau)\neq0$, where $\tau=j,j+1$, $n\in\mathbb{N}$, $P_n^0$ and $P_n^1$ are the Legendre polynomials.
	\end{itemize}
\end{definition}

\begin{remark}
Indeed, by the fact that $P_n^0(1)\equiv1$, $P_n^1(1)\equiv0$ when $\theta=0$ for all $n\in\mathbb{N}$, and the continuity of the Legendre polynomials, it is easy to know that there exist $\delta_0>0$ such that for any $\epsilon>0$ and $\theta\in(0, \delta_0)$, there holds $P_n^0(\cos\theta)\in(1-\epsilon,1)$ and $P_n^1(\cos\theta)\in(0,\epsilon)$, which implies the existence of $\theta_\tau$ in Definition \ref{ad obstacle}. Therefore, the definition of the admissible polyhedral obstacle is well-defined. 
\end{remark}

Throughout this section, we signify an admissible polyhedral obstacle as ($\Omega, \eta$). Then we define the rational and irrational obstacle in $\mathbb{R}^3$ based on Definition \ref{def:irration} for the rational and irrational {\color{black}{vertex corner}} of $\Omega$.

{\color{black}{
\begin{definition}\label{ir obstacle}
	Let $(\Omega, \eta)$ be an admissible polygonal obstacle. If  there exits a vertex corner that is rational, then it is said to be a \emph{rational obstacle}. If all the vertex corners of $\Omega$ are irrational, then it is called an \emph{irrational obstacle}. The smallest degree of the rational corner of $\Omega$ is referred to as the \emph{rational degree} of $\Omega$. 
\end{definition}  }}


\begin{definition}\label{def6}
	$\Omega$ is said to be an admissible complex polyhedral obstacle if it consists of finitely many admissible polyhedral obstacles. That is,
	\begin{equation*}\label{eq:r2a}
	(\Omega, \eta)=\bigcup_{j=1}^l (\Omega_j, \eta_j),
	\end{equation*}
	where $l\in\mathbb{N}$ and each $(\Omega_j, \eta_j)$ is an admissible polyhedral obstacle. Here, we define
	\begin{equation*}\label{eq:r2b}
	\eta=\sum_{j=1}^l \eta_j\chi_{\partial\Omega_j}. 
	\end{equation*}
	Moreover, $\Omega$ is said to be irrational if all of its component polyhedral obstacles are irrational, otherwise it is said to be rational. For the latter case, the smallest degree among all the degrees of its rational components is defined to be the degree of the complex obstacle $\Omega$.  
\end{definition}

Next, we give the unique determination result for an admissible complex irrational polyhedral obstacle by at most two far-field patterns. 

\begin{theorem}\label{inverse1}
	Let $(\Omega, \eta)$ and $(\widetilde\Omega, \widetilde\eta)$ be two admissible complex irrational obstacles. Let $k\in\mathbb{R}_+$ be fixed and $\mathbf{d}_\ell$, $\ell=1, 2$ be two distinct incident directions from $\mathbb{S}^2$.  Let $\mathbf{G}$ denote the unbounded connected component of $\mathbb{R}^3\backslash\overline{(\Omega\cup\widetilde\Omega)}$. Let $u_\infty$ and $\widetilde{u}_\infty$ be, respectively, the far-field patterns associated with $(\Omega, \eta)$ and $(\widetilde\Omega, \widetilde\eta)$. If 
	\begin{equation}\label{eq:cond1}
	u_\infty(\hat {\mathbf x}, \mathbf{d}_\ell )=\widetilde u_\infty(\hat {\mathbf x}, \mathbf{d}_\ell), \ \ \hat{\mathbf x}\in\mathbb{S}^2, \ell=1, 2,
	\end{equation}
	then one has that
	$$
	\left(\partial \Omega \backslash \partial \overline{ \widetilde{\Omega }} \right  )\bigcup \left(\partial \widetilde{\Omega } \backslash \partial  \overline{ \Omega } \right )
	$$
	cannot possess a vertex-corner point on $\partial \mathbf{G}$. Moreover,
	\begin{equation}\label{eta}
	\eta=\widetilde{\eta}\quad\mbox{on}\quad \partial\Omega\cap\partial\widetilde{\Omega}.
	\end{equation}

\end{theorem}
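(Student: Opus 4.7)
The plan is to argue by contradiction: suppose that $(\partial\Omega\setminus\partial\overline{\widetilde\Omega})\cup(\partial\widetilde\Omega\setminus\partial\overline{\Omega})$ possesses a vertex-corner point $\mathbf{x}_0$ lying on $\partial\mathbf{G}$. Without loss of generality, take $\mathbf{x}_0$ to be a vertex of $\Omega$ with $\mathbf{x}_0\notin\partial\widetilde\Omega$. Choose a small ball $B_\rho(\mathbf{x}_0)$ such that $B_\rho(\mathbf{x}_0)\cap\widetilde\Omega=\emptyset$, so that the total fields $\widetilde u(\,\cdot\,;k,\mathbf{d}_\ell)$ associated with $(\widetilde\Omega,\widetilde\eta)$ are real-analytic solutions of $\Delta v+k^2 v=0$ on $B_\rho(\mathbf{x}_0)$. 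Applying Rellich's lemma together with unique continuation to the assumption \eqref{eq:cond1} yields $u(\,\cdot\,;k,\mathbf{d}_\ell)=\widetilde u(\,\cdot\,;k,\mathbf{d}_\ell)$ throughout $\mathbf{G}$, and hence the $u(\,\cdot\,;k,\mathbf{d}_\ell)$ extend to eigenfunctions of $-\Delta$ with eigenvalue $k^2$ on $B_\rho(\mathbf{x}_0)$. Since the faces of $\partial\Omega$ meeting at $\mathbf{x}_0$ still carry the original boundary conditions of $(\Omega,\eta)$, each such face becomes a nodal, singular, or generalised singular plane for $u(\,\cdot\,;k,\mathbf{d}_\ell)$ in the sense of Definition~\ref{def:1}.

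Next I would arrange the hypothesis $u(\mathbf{x}_0)=0$ required by Theorem~\ref{ir-vertex}\,(2). If at least one of the intersecting planes is nodal, this is automatic. Otherwise, using the two incident directions $\mathbf{d}_1,\mathbf{d}_2$, form a non-trivial linear combination
\begin{equation*}
w(\mathbf{x})=c_1 u(\mathbf{x};k,\mathbf{d}_1)+c_2 u(\mathbf{x};k,\mathbf{d}_2)
\end{equation*}
with $c_1 u(\mathbf{x}_0;k,\mathbf{d}_1)+c_2 u(\mathbf{x}_0;k,\mathbf{d}_2)=0$. By linearity, $w$ still solves the Helmholtz equation on $B_\rho(\mathbf{x}_0)$ and satisfies the same homogeneous boundary condition on every face of $\partial\Omega$ meeting at $\mathbf{x}_0$. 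Since $(\Omega,\eta)$ is assumed admissible and irrational, the vertex corner at $\mathbf{x}_0$ verifies the Legendre-polynomial non-degeneracy hypothesis of Definition~\ref{ad obstacle}, and Theorem~\ref{ir-vertex} applies to $w$ to give $\mathrm{Vani}(w;\mathbf{x}_0)=+\infty$.

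The strong unique continuation property then forces $w\equiv 0$ in the unbounded exterior component $\mathbf{G}$. Expanding $w=c_1(u^i_1+u^s_1)+c_2(u^i_2+u^s_2)$ and using that the scattered parts $u^s_\ell$ are radiating while the incident parts are plane waves with distinct directions $\mathbf{d}_1\neq\mathbf{d}_2$, a standard asymptotic argument (comparing decaying and non-decaying contributions, and invoking the linear independence of plane waves $e^{ik\mathbf{x}\cdot\mathbf{d}_\ell}$) yields $c_1=c_2=0$, contradicting the non-triviality of the chosen combination. This rules out any such vertex-corner point, proving the first assertion. The main technical obstacle here is ensuring the admissibility conditions of Definition~\ref{ad obstacle} are inherited by the configuration at $\mathbf{x}_0$ and that the case analysis (whether the faces at $\mathbf{x}_0$ are nodal, singular, or generalised singular, and whether $\mathbf{x}_0\in\partial\Omega$ or $\partial\widetilde\Omega$) is uniformly handled by the appropriate theorems from Sections~\ref{sec3} and~\ref{sec4}.

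For the impedance identification \eqref{eta}, once the symmetric-difference obstruction is ruled out, restrict attention to any open face $\Gamma\subset\partial\Omega\cap\partial\widetilde\Omega$. On $\Gamma$ both $\partial_\nu u+\eta u=0$ and $\partial_\nu u+\widetilde\eta u=0$ hold for the common total field $u$, so $(\eta-\widetilde\eta)u|_\Gamma=0$. If $\eta\not\equiv\widetilde\eta$ on $\Gamma$, then $u$ would vanish on an open subset of $\Gamma$ and, since $\partial_\nu u=-\eta u$ also vanishes there, Holmgren's theorem (or equivalently the Cauchy-data unique continuation) would propagate $u\equiv 0$ into $\mathbf{G}$, again contradicting the non-trivial incident plane wave in the far-field asymptotics \eqref{eq:far}. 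Hence $\eta=\widetilde\eta$ on $\partial\Omega\cap\partial\widetilde\Omega$, completing the argument. The delicate point in this last step is handling the mixed cases where $\eta$ or $\widetilde\eta$ takes the value $0$ or $\infty$ on adjacent portions of the face, which is managed by combining Holmgren on each homogeneous sub-piece with the analyticity of $u$ up to the flat face.
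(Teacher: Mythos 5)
Your proposal is correct and follows essentially the same route as the paper: Rellich's lemma transfers the far-field identity to an identity of total fields in $\mathbf{G}$, the homogeneous boundary conditions on the faces meeting at the putative corner turn them into (generalised singular or nodal) planes for a Helmholtz solution near the corner, a suitable nontrivial linear combination vanishing at the corner triggers the irrational vertex-corner vanishing results of Sections~\ref{sec3}--\ref{sec4}, and analytic continuation plus linear independence of the two total fields (equivalently the plane-wave asymptotics) yields the contradiction, with the same Holmgren argument for \eqref{eta}. The only cosmetic difference is that you fold the paper's Case 1 (one field vanishing at the corner, handled there via the modulus-one far-field asymptotics) and Case 2 (both nonzero) into a single linear-combination argument, which is a harmless streamlining.
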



\begin{proof}
	    We prove the theorem by contradiction. Assume that
		$\left(\partial \Omega \backslash \partial \overline{ \widetilde{\Omega }} \right  )\cup \left (\partial \widetilde{\Omega } \backslash\partial  \overline { \Omega  } \right ) $ has a vertex-corner point $\mathbf x_c$ on $\partial \mathbf{G}$.  Then, $\mathbf x_c$ is either located at $\Omega$ or  $\widetilde\Omega$. Without loss of generality, we assume that $\mathbf x_c$ is a 3D vertex corner of $\widetilde\Omega$, which also indicates that $\mathbf{x}_c$ lies outside $\Omega$. Let $h\in\mathbb{R}_+$ be sufficiently small such that $B_h(x_c)\Subset\mathbb{R}^2\backslash\overline \Omega $, then we can suppose that 
		\begin{equation*}\label{eq:aa2}
		B_h(\mathbf x_c)\cap \partial\widetilde\Omega=\Pi_i,\quad i=1,2,\cdots,n,\quad n\geq3,
		\end{equation*}
		where $\Pi_i$ are the $n$ planes lying on the $n$ faces of $\widetilde\Omega$ that intersect at $\mathbf x_c$. 
%
%
		
		Recall that $\mathbf{G}$ denote the unbounded connected component of $\mathbb{R}^3\backslash\overline{(\Omega\cup\widetilde\Omega)}$. By \eqref{eq:cond1} and the Rellich theorem (cf. \cite{CK}), we know that
		\begin{equation}\label{eq:aa3}
		u(\mathbf x; k, \mathbf{d}_\ell)=\widetilde{u}(\mathbf x; k, \mathbf{d}_\ell),\quad x\in\mathbf{G},\ \ell=1, 2. 
		\end{equation}
		Since $\Pi_i\subset\partial\mathbf{G}$, $i=1,2,\cdots,n$, combining \eqref{eq:aa3} with the generalized boundary condition \eqref{bound} on $\partial\widetilde\Omega$, it is easy to obtain that
		\begin{equation*}\label{eq:aa4}
		\partial_\nu u+\widetilde\eta u=\partial_\nu \widetilde u+\widetilde\eta\widetilde u=0\quad\mbox{on}\ \ \Pi_i,\quad i=1,2,\cdots,n,\quad n\geq3. 
		\end{equation*}
		Furthermore, since $B_h(x_c)\Subset\mathbb{R}^2\backslash\overline \Omega $, we have $-\Delta u=k^2 u$ in $B_h(\mathbf x_c)$. We next divide our proof into two separate cases. 
		
	
	\medskip
	
	\noindent {\bf Case 1.}~Suppose that either $u(\mathbf x_c; k, \mathbf{d}_1)$ or $u(\mathbf x_c; k, \mathbf{d}_2)$ is zero. Without loss of generality, we assume that $u(\mathbf x_c; k, \mathbf{d}_1)=0$. By the assumption of the theorem that $\widetilde\Omega$ is an admissible  irrational obstacle, we can know that $\mathbf{ x}_c$ is an irrational vertex-corner point of $\widetilde{\Omega}$, which also implies that 
	{\color{black}{there exists $\Pi_{i_0}$ and $\Pi_{i_0+1}$ such that the corresponding intersecting dihedral angle is irrational.}} Hence, by our results in Sections~\ref{sec3} and \ref{sec4}, we can immediately derive that
	\begin{equation*}\label{eq:aa5}
	u(\mathbf x; k, \mathbf{d}_1)=0\quad\mbox{in}\ \ B_h(\mathbf x_c),
	\end{equation*}
	which in turn yields by the analytic continuation that 
	\begin{equation}\label{eq:aa51}
	u(\mathbf x; k, \mathbf{d}_1)=0\quad\mbox{in}\ \ \mathbb{R}^2\backslash\overline{\Omega}. 
	\end{equation}
	In particular, one has from \eqref{eq:aa51} that
	\begin{equation}\label{eq:aa6}
	\lim_{|\mathbf x|\rightarrow\infty} \left|u(\mathbf x; k, \mathbf{d}_1)\right|=0. 
	\end{equation}
	But this contradicts to the fact that follows from \eqref{eq:far}:
	\begin{equation}\label{eq:aa61}
	\lim_{|\mathbf x|\rightarrow\infty} \left|u(\mathbf  x; k, \mathbf{d}_1)\right|=\lim_{|\mathbf x|\rightarrow\infty} \left|e^{\mathrm{i}k\mathbf x\cdot \mathbf{d}_1}+u^s(\mathbf x; k, \mathbf{d}_1)\right|=1. 
	\end{equation}
	
	\medskip
	
	\noindent {\bf Case 2.}~ Suppose that both $u(\mathbf x_c; k, \mathbf{d}_1)\neq 0$ and $u(\mathbf x_c; k, \mathbf{d}_2)\neq 0$. Set
	\begin{equation}\label{eq:bb1}
	\alpha_1=u(\mathbf x_c; k, \mathbf{d}_2)\quad\mbox{and}\quad \alpha_2=-u(\mathbf x_c; k, \mathbf{d}_1),
	\end{equation}
	and
	\begin{equation}\label{eq:bb2}
	v(\mathbf x)=\alpha_1 u(\mathbf x; k, \mathbf{d}_1)+\alpha_2 u(\mathbf x; k, \mathbf{d}_2),\ \ \ x\in B_h(\mathbf x_c). 
	\end{equation}
	It is easy to verify that $v$ fulfills
		\begin{equation}\label{eq:bb3}
		-\Delta v=k^2 v\quad\mbox{in}\ \ B_h(\mathbf x_c) \quad\mbox{and}\quad
		\partial_\nu v+\widetilde\eta v=0\quad\mbox{on}\ \ \Pi_i, i=1,2,\cdots,n, n\geq3. 
		\end{equation}
	Moreover, by the choice of $\alpha_1, \alpha_2$ in \eqref{eq:bb1}, one obviously has that $v(\mathbf x_c)=0$. Hence, by our results in Sections~\ref{sec3} and \ref{sec4}, we can deduce that
	\begin{equation*}\label{eq:bb5}
	v=0\quad\mbox{in}\ \ B_h(\mathbf x_c),
	\end{equation*}
	and thus
	\begin{equation}\label{eq:bb6a}
	\alpha_1 u(\mathbf x; k, \mathbf{d}_1)+\alpha_2 u(\mathbf x; k, \mathbf{d}_2)=0\quad\mbox{in}\ \ \mathbb{R}^3\backslash\overline{\Omega}
	\end{equation}
	by the analytic continuation.
	However, since $\mathbf{d}_1$ and $\mathbf{d}_2$ are distinct, we know from \cite[Chapter 5]{CK} that $u(x; k, \mathbf{d}_1)$ and $u(x; k, \mathbf{d}_2)$ are linearly independent in $\mathbb{R}^3\backslash\overline{\Omega}$.  Therefore, from \eqref{eq:bb6a} we can obtain that $\alpha_1=\alpha_2=0$, which contracts to the assumption at the beginning that both $\alpha_1$ and $\alpha_2$ are nonzero. 
	
	Then we prove \eqref{eta} by contradiction.
	Let $\mathcal{E}\subset \partial\Omega\cap\partial\widetilde\Omega$ be an open subset such that $\eta\neq \widetilde\eta$ on $\mathcal{E}$. By taking a smaller subset of $\mathcal{E}$ if necessary, we can assume that $\eta$ (respectively, $\widetilde\eta$) is either a fixed constant or $\infty$ on $\mathcal{E}$. Clearly, one has $u=\widetilde u$ in $\mathbb{R}^3\backslash\overline{(\Omega\cup\widetilde\Omega)}$. Hence, there holds that 
	\begin{equation*}\label{eq:bb6}
	\partial_\nu u+\eta u=0, \ \ \partial_\nu\widetilde u+\widetilde\eta \widetilde u=0,\ \ u=\widetilde u, \ \ \partial_\nu u=\partial_\nu\widetilde u\quad\mbox{on}\ \ \mathcal{E}. 
	\end{equation*}
	Combining with the assumption that $\eta\neq\widetilde{\eta}$ on $\mathcal{E}$, by direct computation, we can deduce that 
	\begin{equation*}\label{eq:bb7}
	u=\partial_\nu u=0\quad\mbox{on}\ \ \mathcal{E},
	\end{equation*}
	which in turn yields by the Homogren's uniqueness result (cf. \cite{Liu-Zou}) that $u=0$ in $\mathbb{R}^3\backslash\Omega$. Therefore, we arrive at the same contradiction as that in \eqref{eq:aa6}, which implies \eqref{eta}.
\end{proof}

We proceed to consider the unique determination of rational obstacles. By Definition \ref{def:irration} and Definition \ref{ir obstacle},  {\color{black}{we know that a rational obstacle contains at least one rational 3D vertex corner point}}. Recall Section \ref{sec2} and \ref{sec3}. For a fixed rational 3D vertex corner point $\mathbf{ x}_c$ which is intersected by $\Pi_i$, where $\Pi_i=\mathrm{span}\{\overrightarrow{OA_i},\overrightarrow{OA_{i+1}}\}$ with the $n$ dihedral angles $\angle(\Pi_i, \Pi_{i+1})=\alpha_i\cdot\pi$, $i=1,2,\cdots,n$, $n\geq3$, it is direct to verify that the eigenfunction $u$ to \eqref{eq:eig} of the form
\begin{equation*}\label{ra-expan}
	u(\mathbf{ x})=4\pi\sum_{n=0}^{\infty}\sum_{m=-n}^{n}\bsi^na_n^mj_n(\sqrt{\lambda}r)\sqrt{\frac{2n+1}{4\pi}\frac{(n-|m|)!}{(n+|m|)!}}P_n^{|m|}(\cos\theta)e^{\bsi m\phi}
\end{equation*}
satisfies that $a_0^0=0$ if $u(\mathbf{ x}_c)=0$,where $\mathbf{ x}=(x_1, x_2, x_3)=r(\sin\theta\cos\phi, \sin\theta\sin\phi, \cos\theta)\in\mathbb{R}^3$, $\lambda$ is the corresponding eigenvalue, $P_n^m(t)$ denotes the associated Legendre function and $j_n(t)$ signifies the $n$-th spherical Bessel function. Since $\alpha_i\in(0,1)$ for any $i=1,2,\cdots,n$, one can immediately obtain that $a_1^{\pm1}=0$; see Theorems \ref{gene-nodal}, \ref{3d vertex-thm1} and \ref{3d vertex-thm2} for detailed discussions. Moreover, if we denote 
\begin{equation}\label{edge}
\overrightarrow{OA_i}=(r,\theta_i,\phi_i)\mbox{ for }r>0, \theta_i\in(0,\pi)\mbox{ and }\phi_i\in(0,2\pi)
\end{equation}
in the spherical coordinate system, then there always holds $P_1^1(\cos\theta_i)=-\sin\theta_i\neq0$. However, since $P_1^0(\cos\theta_i)=\cos\theta_i$, we know that $P_1^0(\cos\theta_i)\neq0$ only works for $\theta_i\neq\frac{\pi}{2}$ and thus $a_1^0=0$. That means, the eigenfunction $u$ is vanishing at least to the second order when $\theta_i\neq\frac{\pi}{2}$, otherwise, $u$ is vanishing at least to the first order.

Let $\Omega$ be a polyhedron in $\mathbb{R}^3$ and $\mathbf x_c$ be a vertex-corner point of $\Omega$. Following the same notations in \cite[Theorem 8.6]{CDL2}, we define
\begin{equation*}\label{eq:region1}
\Omega_r(\mathbf x_c)=B_r(\mathbf x_c)\cap \mathbb{R}^3\backslash\overline{\Omega},\ \ r\in\mathbb{R}_+. 
\end{equation*}
For any function $f\in L_{loc}^2(\mathbb{R}^3\backslash\overline{\Omega})$, define 
\begin{equation*}\label{eq:l1}
\mathcal{L}(f)(\mathbf x_c):=\lim_{r\rightarrow+0}\frac{1}{|\Omega_r(\mathbf x_c)|}\int_{\Omega_r(\mathbf x_c)} f(\mathbf x)\ {\rm d} \mathbf x
\end{equation*}
if the limit exists. It is easy to see that if $f(\mathbf x)$ is continuous in $\overline{\Omega_{\epsilon_0}(\mathbf x_c)}$ for a sufficiently small $\epsilon_0\in\mathbb{R}_+$, then $\mathcal{L}(f)(\mathbf x_c)=f(\mathbf x_c)$.

Now we are ready to study the unique determination of rational obstacles.

\begin{theorem}\label{inverse2}
	Let $(\Omega, \eta)$ and $(\widetilde{\Omega },\widetilde{\eta})$ be two admissible complex rational obstacles of degree $p\geq 3$. Let $k\in\mathbb{R}_+$ be fixed and $\mathbf{d}_\ell$, $\ell=1, 2$ be two distinct incident directions from $\mathbb{S}^2$. Suppose $u_\ell(\mathbf  x)=u(\mathbf x; k, \mathbf{d}_\ell)$ and $\widetilde{u}_\ell=\widetilde{u}(\mathbf{ x};k,\mathbf{d}_\ell)$ are the total wave fields associated with $(\Omega, \eta)$ and $(\widetilde{\Omega},\widetilde{\eta})$ for $e^{\mathrm{i}k\mathbf x\cdot\mathbf{d}_\ell}$, respectively, and the corresponding far-field patterns are denoted by $u_{\ell,\infty}(\hat{\mathbf x};k,\mathbf{d}_\ell)$ and $\widetilde{u}_{\ell,\infty}(\hat{\mathbf x};,k,\mathbf{d}_\ell)$, $\ell=1, 2$.  Recall that $\mathbf{G}$ signifies the unbounded connected component of $\mathbb{R}^3\backslash\overline{(\Omega\cup\widetilde\Omega)}$. 
	If the following conditions are fulfilled, 
	\begin{equation}\label{cond51}
	u_{\ell,\infty}(\hat {\mathbf x}; k, \mathbf{d}_\ell )=\widetilde u_{\ell,\infty}(\hat {\mathbf x}; k, \mathbf{d}_\ell), \ \ \hat{\mathbf x}\in\mathbb{S}^2, \ell=1, 2,
	\end{equation}
	\begin{equation}\label{eq:cc1}
	\mathcal{L}\left( u_2\cdot\nabla u_1-u_1\cdot\nabla u_2\right)(\mathbf x_c)\neq 0\quad\mbox{and}\quad \mathcal{L}\left( \widetilde{u}_2\cdot\nabla \widetilde{u}_1-\widetilde{u}_1\cdot\nabla \widetilde{u}_2\right)(\mathbf x_c)\neq 0,
	\end{equation}
	where $\mathbf x_c$ is any vertex of $\Omega$, then one has that
		$$
		\left(\partial \Omega \backslash \partial \overline{ \widetilde{\Omega }} \right  )\cup \left(\partial \widetilde{\Omega } \backslash \partial \overline{ \Omega } \right )
		$$
		cannot possess a vertex corner point on  $\partial \mathbf{G}$.
\end{theorem}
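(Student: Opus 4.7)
The argument closely parallels the contradiction strategy of Theorem~\ref{inverse1}, except that the irrational infinite-vanishing step is replaced by the finite, degree-controlled vanishing results of Section~\ref{sec3}, converted into a contradiction by means of the non-degeneracy hypothesis~\eqref{eq:cc1}. Suppose for contradiction that $(\partial\Omega\setminus\partial\overline{\widetilde\Omega})\cup(\partial\widetilde\Omega\setminus\partial\overline\Omega)$ admits a vertex-corner point $\mathbf{x}_c$ on $\partial\mathbf{G}$. Without loss of generality take $\mathbf{x}_c$ to be a vertex of $\widetilde\Omega$ lying outside $\overline\Omega$, and pick $h>0$ small enough that $B_h(\mathbf{x}_c)\Subset\mathbb{R}^3\setminus\overline\Omega$. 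Rellich's lemma applied to~\eqref{cond51} yields $u_\ell\equiv\widetilde u_\ell$ on $\mathbf{G}$ for $\ell=1,2$; since $u_\ell$ is already smooth across the faces of $\widetilde\Omega$ inside $B_h(\mathbf{x}_c)$, extending by this equality shows each $u_\ell$ is a Helmholtz eigenfunction on $B_h(\mathbf{x}_c)$ satisfying the generalized impedance condition of $\widetilde\Omega$ on every face $\Pi_i\cap B_h(\mathbf{x}_c)$ meeting at $\mathbf{x}_c$.

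\textbf{Linear combination and vanishing.} Next I would set
$$\alpha_1:=u_2(\mathbf{x}_c),\qquad \alpha_2:=-u_1(\mathbf{x}_c),\qquad v:=\alpha_1 u_1+\alpha_2 u_2,$$
so that $v(\mathbf{x}_c)=0$ and $v$ inherits the face-wise boundary conditions on $B_h(\mathbf{x}_c)$. If $(\alpha_1,\alpha_2)\neq(0,0)$, then $v$ is nontrivial by the linear independence of $u_1,u_2$ (cf.~\cite[Ch.~5]{CK}), and the admissibility of $\widetilde\Omega$ (Definition~\ref{ad obstacle}) supplies the Legendre-polynomial non-vanishing required to invoke whichever of Theorems~\ref{3d vertex-thm1}--\ref{3d vertex-n plane22}, together with Remarks~\ref{rem:ll1}--\ref{rem:ll2}, matches the nodal/(generalized) singular pattern of the faces at $\mathbf{x}_c$. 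Because the rational degree of $\widetilde\Omega$ is at least $p\geq 3$, this yields $\mathrm{Vani}(v;\mathbf{x}_c)\geq 3$, so in the classical sense $v(\mathbf{x}_c)=0$ and $\nabla v(\mathbf{x}_c)=0$. In the degenerate subcase $u_1(\mathbf{x}_c)=u_2(\mathbf{x}_c)=0$, the same theorems apply directly to each $u_\ell$, giving $u_\ell(\mathbf{x}_c)=0$ and $\nabla u_\ell(\mathbf{x}_c)=0$ for $\ell=1,2$.

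\textbf{Extracting the contradiction.} Assume WLOG $\alpha_2\neq 0$. Solving $v=\alpha_1 u_1+\alpha_2 u_2$ for $u_2$ and rearranging yields the algebraic identity
$$u_2\,\nabla u_1-u_1\,\nabla u_2 \;=\; \frac{1}{\alpha_2}\bigl(v\,\nabla u_1-u_1\,\nabla v\bigr),$$
whose right-hand side vanishes continuously at $\mathbf{x}_c$ since both $v(\mathbf{x}_c)=0$ and $\nabla v(\mathbf{x}_c)=0$. In the degenerate subcase the expression vanishes at $\mathbf{x}_c$ term-by-term. Either way, continuity of $u_2\nabla u_1-u_1\nabla u_2$ on $B_h(\mathbf{x}_c)$ forces $\mathcal{L}(u_2\nabla u_1-u_1\nabla u_2)(\mathbf{x}_c)=0$, contradicting the first half of~\eqref{eq:cc1}. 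The symmetric situation in which $\mathbf{x}_c$ is a vertex of $\Omega\setminus\overline{\widetilde\Omega}$ is handled identically, with $(\Omega,\eta,u_\ell)$ and $(\widetilde\Omega,\widetilde\eta,\widetilde u_\ell)$ interchanged, invoking the second half of~\eqref{eq:cc1}.

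\textbf{Main obstacle.} The essential difficulty is matching the correct vanishing theorem from Section~\ref{sec3} to the precise nodal-versus-(generalized) singular pattern of the faces meeting at $\mathbf{x}_c$: Definition~\ref{ad obstacle} was crafted so that, in every admissible configuration, at least one Section~\ref{sec3} theorem delivers vanishing order at least $p$, and one must enumerate the possible face-type patterns (nodal only, generalized singular only, and mixtures) to confirm that the geometric hypotheses of the applicable theorem are always met. Once this bookkeeping is dispatched, the Wronskian-type identity above converts the finite vanishing of $v$ directly into the desired contradiction with~\eqref{eq:cc1}---this is the three-dimensional analogue of the localization trick that drove the rational case in~\cite{CDL2}.
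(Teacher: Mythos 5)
Your proposal is correct and follows essentially the same route as the paper: the same contradiction setup at a vertex $\mathbf{x}_c$ of $\widetilde\Omega$ on $\partial\mathbf{G}$, Rellich's lemma to transfer the boundary conditions, the same auxiliary function $v=\alpha_1u_1+\alpha_2u_2$ with $\alpha_1=u_2(\mathbf{x}_c)$, $\alpha_2=-u_1(\mathbf{x}_c)$, and the Section~\ref{sec3} vanishing results (via admissibility) giving $v(\mathbf{x}_c)=\nabla v(\mathbf{x}_c)=0$. The only cosmetic difference is the order of the contradiction: you push the vanishing of $\nabla v$ through the Wronskian identity to conclude $\mathcal{L}(u_2\nabla u_1-u_1\nabla u_2)(\mathbf{x}_c)=0$ against \eqref{eq:cc1}, whereas the paper reads \eqref{eq:cc1} (plus continuity) as $\nabla v(\mathbf{x}_c)\neq 0$ and contradicts the vanishing result --- the same computation, since $\nabla v(\mathbf{x}_c)$ is exactly that Wronskian evaluated at $\mathbf{x}_c$.
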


\begin{proof}
	We prove the theorem by contradiction. Assume that \eqref{cond51} holds but $
	\left(\partial \Omega \backslash \partial \overline{ \widetilde{\Omega }} \right  )\cup \left(\partial \widetilde{\Omega } \backslash \partial \overline{ \Omega } \right )
	$ has a vertex-corner point $\mathbf{ x}_c$ on $\partial \mathbf{G}$. Without loss of generality, We still assume that $\mathbf{x}_c$ is a 3D vertex corner of $\widetilde{\Omega}$. In what follows, we adopt the same notation as those introduced in the proof of Theorem~\ref{inverse1}.
	
	By following a similar argument to the proof of Theorem~\ref{inverse1}, one can show that there exist $n$ pieces of planes $\Pi_i\subset\partial\mathbf{G}$ intersecting at $\mathbf{ x}_c$, such that $\partial_\nu u+\widetilde\eta u=0$ on $\Pi_i$, $i=1,2,\cdots,n$. Using the fact that $u=\widetilde u$ near $\mathbf x_c$ by Rellich Lemma and the condition \eqref{eq:cc1} on $(\widetilde\Omega,\widetilde\eta)$, we actually have
	\begin{equation}\label{eq:cc2}
	u(\mathbf x_c; k, \mathbf{d}_2)\cdot\nabla u(\mathbf x_c; k, \mathbf{d}_1)-u(\mathbf x_c; k, \mathbf{d}_1)\cdot\nabla u(\mathbf x_c; k, \mathbf{d}_2)\neq 0. 
	\end{equation}
	Clearly, \eqref{eq:cc2} implies that $\alpha_1:=u(\mathbf x_c; k, \mathbf{d}_2)$ and $\alpha_2=-u(\mathbf x_c; k, \mathbf{d}_1)$ cannot be identically zero. Let $v$ be the one introduced in \eqref{eq:bb2}, then 
	by direct verification we know that $v$ fulfills \eqref{eq:bb3} as well as
	\begin{equation}\label{eq:cc3}
	v(\mathbf x_c)=0\quad\mbox{and}\quad \nabla v(\mathbf x_c)\neq 0. 
	\end{equation}
	
	Since $\widetilde\Omega$ is rational of degree $p\geq 3$, we know that $\Pi_i$, $i=1,2,\cdots,n$, intersect either at an irrational 3D vertex corner or a rational 3D vertex corner of degree $p\geq 3$. In either case, by our results in Sections~\ref{sec2}, \ref{sec3} and \ref{sec4}, we see that $v$ is vanishing at least to second order at $\mathbf x_c$ if $\theta_i\neq\frac{\pi}{2}$ in \eqref{edge} for $i=1,2,\cdots,n$. Hence, there holds $\nabla v(\mathbf x_c)=0$, which is a contradiction to \eqref{eq:cc3}. 
	%
\end{proof}

\begin{remark}
	In the proof of this theorem, we illustrate the vanishing order of $u$ by the normal derivatives in Taylor expansion. That is, $v(\mathbf{ x}_c)=\nabla v(\mathbf{ x}_c)=0$ implies that $v$ is vanishing at $\mathbf{ x}_c$ at least to the second order. Indeed, it is equivalent to know that $a_0^0=a_1^{\pm1}=a_1^0=0$ of $u$ in spherical wave expansion, which can be easily verified in the main theorems of Sections \ref{sec2} and \ref{sec3} under the condition that $\theta_i\neq\frac{\pi}{2}$, $i=1,2,\cdots,n$.
\end{remark}

 

%

\begin{remark}
	The uniqueness results and the corresponding argument in Theorems \ref{inverse1} and \ref{inverse2} are ``localized" around the corner $\mathbf{ x}_c$ based on the spectral results in Sections \ref{sec2} and \ref{sec3}, which is more effective in the application of inverse problems.
\end{remark}

We would like to point out that the condition \eqref{eq:cc1} can actually be fulfilled if one imposes certain generic conditions on $\Omega$. For instance, if the obstacle $\Omega$ is sufficiently small compared to the wavelength, namely $k\cdot\mathrm{diam}(\Omega)\ll 1$, then from a physical point of view, the scattered wave field due to the obstacle is of a much smaller magnitude than the incident field, and thus the incident plane wave dominates in the total wave field $u=u^i+u^s$. Under this circumstance, \eqref{eq:cc1} can be verified  straightforwardly. 

%

\section*{Acknowledgement}

The work  of H Diao was supported in part by the Fundamental Research Funds for the Central Universities under the grant 2412017FZ007.  The work of H Liu was supported by the FRG fund from Hong Kong Baptist University and the Hong Kong RGC General Research Fund (projects 12301218 and 12302017).  The work of J Zou 
was supported by the Hong Kong RGC General Research Fund (project 14304517) and 
NSFC/Hong Kong RGC Joint Research Scheme 2016/17 (project N\_CUHK437/16).

\end{document}